\newtheorem{theorem}{Theorem}
\newtheorem{proposition}{Proposition}
\newtheorem{claim}{Claim}
\newtheorem{lemma}{Lemma}
\numberwithin{lemma}{section}
\newtheorem{corollary}{Corollary}
\theoremstyle{definition}
\newtheorem{remark}{Remark}
\numberwithin{equation}{section}
\newcommand{\N}{{\mathbb N}}
\newcommand{\R}{{\mathbb R}}
\newcommand{\dif}{\,\mathrm{d}}
\begin{document}

\title[Nonlinear Schr\"{o}dinger-Poisson systems]{On a class of nonlinear Schr\"{o}dinger-Poisson systems involving a nonradial charge density}
\author{Carlo Mercuri}
\address{Department of Mathematics, Swansea University, Fabian Way, Crymlyn Burrows, Skewen, Swansea, U.K. SA1~8EN} 
\email{c.mercuri@swansea.ac.uk}
\author{Teresa Megan Tyler}
\address{Department of Mathematics, Swansea University, Fabian Way, Crymlyn Burrows, Skewen, Swansea, U.K. SA1~8EN}
\email{807424@swansea.ac.uk}

\begin{abstract}
In the spirit of the classical work of P. H. Rabinowitz on nonlinear Schr\"{o}dinger equations, we prove existence of mountain-pass solutions and least energy solutions to the nonlinear Schr\"{o}dinger-Poisson system
\begin{equation}\nonumber
\left\{
\begin{array}{lll}
  - \Delta u+ u + \rho (x) \phi  u = |u|^{p-1} u, \qquad &x\in \R^3,  \\
  \,\,\, -\Delta \phi=\rho(x) u^2,\ &   x\in \R^3,
\end{array}
\right.
\end{equation}
under different assumptions on $\rho: \mathbb R^3\rightarrow \mathbb R_+$ at infinity. Our results cover the range $p\in(2,3)$
where the lack of compactness phenomena may be due to the combined effect of the invariance by translations of a `limiting problem' at infinity
and of the possible unboundedness of the Palais-Smale sequences. Moreover, we find necessary conditions for concentration at points to occur for solutions to the singularly perturbed problem 
\begin{equation}\nonumber
\left\{
\begin{array}{lll}
  - \epsilon^2\Delta u+ u + \rho (x) \phi  u = |u|^{p-1} u, \qquad &x\in \R^3,  \\
  \,\,\, -\Delta \phi=\rho(x) u^2,\ &   x\in \R^3,
\end{array}
\right.
\end{equation}
in various functional settings which are suitable for both variational and perturbation methods.  \\ MSC: 35J20, 35B65, 35J60, 35Q55\\

Keywords: Stationary Nonlinear Schr\"{o}dinger-Poisson System, Weighted Sobolev Spaces, \\Palais-Smale Sequences, Lack of Compactness.
\end{abstract}

\maketitle
\tableofcontents

\section{Introduction}
We study existence of positive solutions to the following nonlinear Schr\"{o}dinger-Poisson system

\begin{equation}\label{main SP system}
\left\{
\begin{array}{lll}
  - \Delta u+ u + \rho (x) \phi  u = |u|^{p-1} u, \qquad &x\in \R^3,  \\
  \,\,\, -\Delta \phi=\rho(x) u^2,\ &   x\in \R^3,
\end{array}
\right.
\end{equation}\\
\noindent with $p\in(2,5)$ and $\rho:\R^3 \to \R$ a nonnegative measurable function which represents a non-constant `charge' corrector to the density $u^2$. In the context of the so-called Density Functional Theory, variants of system (\ref{main SP system}) appear as mean field approximations of quantum many--body systems, see \cite{Bao Mauser Stimming}, \cite{Catto}, \cite{Lions Hartree Fock}. The positive Coulombic potential, $\phi$, represents a repulsive interaction between particles, whereas the local nonlinearity $|u|^{p-1} u$ generalises the $u^{5/3}$ term introduced by Slater \cite{Slater} as local approximation of the exchange potential in Hartree--Fock type models, see e.g. \cite{Bokanowski Lopez Soler}, \cite{Mauser}. \\

Within a min-max setting and in the spirit of Rabinowitz \cite{Rabinowitz}, we study existence and qualitative properties of the solutions to (\ref{main SP system}), highlighting those phenomena which are driven by $\rho.$ The system \eqref{main SP system} `interpolates' the classical equation

\begin{equation}\label{Kwong}
  - \Delta u+  u = u^{p} , \qquad x\in \R^3,  
 \end{equation}\\
whose positive solutions have been classified by Kwong \cite{Kwong}, with 

\begin{equation}\label{SP constant}
\left\{
\begin{array}{lll}
  - \Delta u+ u +  \phi  u = |u|^{p-1} u, \qquad &x\in \R^3,  \\
  \,\,\, -\Delta \phi= u^2, & x\in \R^3,
\end{array}
\right.
\end{equation}\\
\noindent studied by several authors in relation to existence, nonexistence, multiplicity and behaviour of the solutions in the semi-classical limit; see e.g. \cite{Ambrosetti},  \cite{Benci and Fortunato},  \cite{Catto}, and references therein. In the case $\rho(x)\to0$ as $|x|\to+\infty$, (\ref{Kwong}) has been exploited as limiting equation to tackle existence/compactness questions related to certain classes of systems similar to \eqref{main SP system}, see e.g. \cite{Cerami and Molle} and \cite{Cerami and Vaira}. In the present paper we consider instances where the convergence of approximating solutions to \eqref{main SP system} is not characterised by means of (\ref{Kwong}), namely the cases where, as $|x|\rightarrow +\infty$, it holds either that $\rho\rightarrow +\infty$ (`coercive case'), or that $\rho\rightarrow \rho_\infty>0$ (`non-coercive case'). The latter corresponds to the case where nontrivial solutions of \eqref{SP constant} (up to coefficients) cause lack of compactness phenomena to occur. The main difficulty in this context is that, despite the extensive literature, a full understanding of the set of positive solutions to \eqref{SP constant} has not yet been achieved (symmetry, non-degeneracy, etc.). \\

The autonomous system (\ref{SP constant}), as well as \eqref{main SP system}, presents various mathematical features which are not shared with nonlinear Schr\"odinger type equations, mostly related to lack of compactness phenomena. In a pioneering work \cite{RuizJFA}, radial functions and constrained minimisation techniques have been used, over a certain natural constraint manifold defined combining the Pohozaev and Nehari identities, yielding existence results of positive radial solutions to \eqref{SP constant} for all $p\in(2,5)$. Again in a radial setting, a variant of system \eqref{SP constant} has been studied more recently in \cite{Jeong and Seok}. When $p\leq 2$ the change in geometry of the associated energy functional causes differing phenomena to occur. In \cite{RuizJFA} existence, nonexistence and multiplicity results have been shown to be sensitive to a multiplicative factor for the Poisson term. Nonexistence results for \eqref{SP constant} have also been obtained in $\R^3$ in the range $p\geq 5$ (see e.g.\ \cite{D'Aprile and Mugnai}). In the presence of potentials, however, existence may occur when $p=5$, as it has been recently shown in \cite{Cerami and Molle arXiv}. Ambrosetti and Ruiz \cite{Ambrosetti and Ruiz} improved upon these early results by using the so-called `monotonicity trick' introduced by Struwe \cite{Struwe} and formulated in the context of the nonlinear Schr\"odinger equations by Jeanjean \cite{Jeanjean} and Jeanjean-Tanaka \cite{Jeanjean and Tanaka}, in order to show the existence of multiple bound state solutions to \eqref{SP constant}.\\

Related problems involving a non-constant charge density $\rho$, and in the presence of potentials, have been studied. The vast majority of works involve the range $p>3$ since, when $p\leq3$, one has to face two major obstacles in applying the minimax methods: constructing bounded Palais-Smale sequences and proving that the Palais-Smale condition holds, see e.g \cite{RuizJFA}, and \cite{Ambrosetti and Ruiz}, \cite{Mercuri}. Cerami and Molle \cite{Cerami and Molle} and  Cerami and Vaira \cite{Cerami and Vaira} studied the system

\begin{equation}\label{SP all potentials}
\left\{
\begin{array}{lll}
  - \Delta u+  V(x) u + \lambda \rho (x)  \phi u = K(x) |u|^{p-1} u, \qquad &x\in \R^3,  \\
  \,\,\, -\Delta \phi=\rho(x) u^2, &  x\in \R^3,
\end{array}
\right.
\end{equation}\\
\noindent where $\lambda>0$ and $V(x)$, $\rho(x)$ and $K(x)$ are nonnegative functions in $\R^3$ such that

\begin{equation}\label{assumptions rho to 0}
\lim_{|x|\to+\infty} \rho(x)=0, \,\, \lim_{|x|\to+\infty} V(x)=V_{\infty}>0,\,\, \lim_{|x|\to+\infty} K(x)=K_{\infty}>0,
\end{equation}\\
and, under suitable assumptions on the potentials, proved the existence of positive ground state and bound state solutions for $p\in (3,5)$. In \cite{Bonheure and Mercuri} existence of positive solutions to \eqref{SP all potentials} in the range $p\in[3,5)$ has been proved under suitable assumptions on the potentials that guarantee some compact embeddings of weighted Sobolev spaces into weighted $L^{p+1}$ spaces. 
Vaira \cite{Vaira} also studied system \eqref{SP all potentials}, in the case that

\begin{equation}\label{assumptions rho to constant}
\lim_{|x|\to+\infty} \rho(x)=\rho_{\infty}>0, \,\,  V(x)\equiv 1,\,\,  \lim_{|x|\to+\infty} K(x)=K_{\infty}>0, 
\end{equation}\\
and, assuming $\lambda>0$ and $K(x) \not\equiv 1$, proved the existence of positive ground state solutions for $p\in(3,5)$. In a recent and interesting paper, Sun, Wu and Feng (see Theorem 1.4 of \cite{Sun Wu Feng}) have shown the existence of a solution to \eqref{SP all potentials} for $p\in (1,3]$, assuming \eqref{assumptions rho to constant} and $K(x)\equiv1$, provided $\lambda$ is sufficiently small and $\int_{\R^3}\rho(x)\phi_{\rho,w_{\lambda}}w_{\lambda}^2 < \int_{\R^3}\rho_{\infty}\phi_{\rho_{\infty},w_{\lambda}}w_{\lambda}^2$, where $(w_{\lambda},\phi_{\rho_{\infty},w_{\lambda}})$ is a positive solution to

\begin{equation*}
\left\{
\begin{array}{lll}
  - \Delta u+   u + \lambda \rho_{\infty}  \phi u = |u|^{p-1} u, \qquad & x\in \R^3,  \\
  \,\,\, -\Delta \phi=\rho_{\infty} u^2,& x\in \R^3.\\
\end{array}
\right.
\end{equation*}\\
\noindent Their results are obtained using the fact that all nontrivial solutions to \eqref{SP all potentials} lie in a certain manifold $M_{\lambda}^-$ (see Lemma 6.1 in \cite{Sun Wu Feng}) to show that the energy functional $J_{\lambda}$ is bounded from below on the set of nontrivial solutions to \eqref{SP all potentials}. We believe that this is necessary to prove Corollary 4.3 in \cite{Sun Wu Feng}, and, ultimately, to prove Theorem 1.4 in \cite{Sun Wu Feng}, and thus the existence result is only viable in the reduced range $2.18 \approx \frac{-2+\sqrt{73}}{3}< p\leq 3$ and provided the additional assumption $ \frac{3p^2+4p-23}{2(5-p)} \rho(x)+ \frac{p-1}{2}(\nabla \rho (x),x) \geq 0$ also holds. In this range of $p$ and under these assumptions, as observed in \cite{Sun Wu Feng}, solutions are ground states. 

\subsection{Main results} In light of the above results, we aim to study existence and qualitative properties of the solutions to (\ref{main SP system}), in the various functional settings corresponding to different hypotheses on the behaviour of $\rho$ at infinity. Throughout the paper we set $D^{1,2}(\R^3)=D^{1,2}$ as the space defined as 

\[D^{1,2}(\R^3) \coloneqq \{ u\in L^6(\R^3) : \nabla u \in L^2(\R^3)\},\]\\
and equipped with norm

\[ ||u||_{D^{1,2}(\R^3)} \coloneqq ||\nabla u||_{L^2(\R^3)}.\]\\
\noindent It is well-known that if $u^2\rho \in L^1_{\textrm{loc}
}(\R^3)$ is such that 

\begin{equation}\label{double integral finite}
\int_{\R^3}\int_{\R^3}\frac{u^2(x)\rho (x) u^2(y) \rho (y)}{|x-y|}\dif x\dif y < +\infty,
\end{equation}\\
\noindent then,

\[\phi_u (x) \coloneqq \int_{\R^3}\frac{\rho (y)u^2(y)}{\omega |x-y|}\dif y \in D^{1,2}(\R^3)\]\\
\noindent is the unique weak solution in $D^{1,2}(\R^3)$ of the Poisson equation

\[-\Delta \phi = \rho (x) u^2\]\\
\noindent and it holds that 

\begin{equation}\label{double integral}
\int_{\R^3} |\nabla \phi_u|^2 = \int_{\R^3} \rho \phi_u u^2  \dif x =\int_{\R^3}\int_{\R^3}\frac{u^2(x)\rho (x) u^2(y) \rho (y)}{\omega|x-y|}\dif{x} \dif{y}.
\end{equation}\\
\noindent Here, $\omega =4\pi.$
\noindent Using the explicit representation of $\phi_u$ the system \eqref{main SP system} reduces to solving the problem

\begin{equation}\label{SP one equation}
 - \Delta u+  u + \rho (x) \phi_u  u = |u|^{p-1} u,\\
 \end{equation}\\
whose positive solutions are critical points of the functional 

\begin{equation}\label{definition I}
I(u) \coloneqq \frac{1}{2}\int_{\R^3}(|\nabla u|^2 + u^2)+\frac{1}{4}\int_{\R^3}  \rho\phi_u u^2 -\frac{1}{p+1}\int_{\R^3}u_+^{p+1},
\end{equation}\\
\noindent which is natural to define in $E(\R^3)\subseteq H^1(\R^3)$   \\
\[E(\R^3) \coloneqq \left\{u\in D^{1,2}(\R^3) \,: \, ||u||_{E} < +\infty\right\},\]
\noindent where

\[||u||_{E}^2 \coloneqq \int_{\R^3}(|\nabla u|^2 +  u^2)\dif x + \left(\int_{\R^3}\int_{\R^3}\frac{u^2(x)\rho (x) u^2(y) \rho (y)}{|x-y|}\dif x\dif y\right)^{1/2}.\]\\

Variants of this space have been studied since the work of P.L. Lions \cite{Lions Hartree}, see e.g. \cite{RuizARMA}, and \cite{Bellazzini},\cite{Bonheure and Mercuri}, \cite{Mercuri Moroz Van Schaftingen}.
\noindent We recall that by the classical Hardy-Littlewood-Sobolev inequality, it holds that 

\begin{equation}\label{HLS intro}
\left|\, \int_{\R^3}\int_{\R^3}\frac{u^2(x)\rho (x) u^2(y) \rho (y)}{\omega|x-y|}\dif{x} \dif{y}\, \right|\leq C ||\rho u^2||_{L^{\frac{6}{5}}(\R^3)}^2,
\end{equation}\\
\noindent for some $C>0$. Thus, if $u\in H^1(\R^3)$, we see that, depending on the assumptions on $\rho$, we may not be able to control the Coulomb integral \eqref{double integral} using the natural bound provided by HLS inequality. This may be the case if e.\ g.\ $\rho(x)\to+\infty$ as $|x|\to+\infty.$ For these reasons, in all of the results of the present paper we also analyse those instances where $E(\R^3)$ and $H^1(\R^3)$ do not coincide. \\
 

\indent As a warm-up observation we have the following theorem regarding existence in the coercive case for $p\geq3$.\\
 \begin{theorem}\label{theorem existence coercive0}[{\bf Coercive case: existence of mountain pass solution for $p\geq 3$}]
Suppose $\rho \in C(\R^3)$ is nonnegative and $\rho(x) \to +\infty$ as $|x| \to +\infty$.  Then, for any $p \in [3,5)$, there exists a solution, $(u, \phi_{u})\in E(\R^3) \times D^{1,2}(\R^3)$, of \eqref{main SP system}, whose components are positive functions. In particular, $u$ is a mountain pass critical point of $I$ at level $c$, where $c$ is the min-max level defined in \eqref{minmax level0}.
\end{theorem}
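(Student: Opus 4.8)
The plan is to run the classical min-max scheme of Ambrosetti--Rabinowitz for the $C^1$ functional $I$ on $E(\R^3)$; the only substantive point is the recovery of compactness, which in the coercive regime is dictated by the growth of the Coulomb energy rather than by comparison with a limiting equation. First I would check that $I$ has the mountain pass geometry. Since $\int_{\R^3}\rho\phi_u u^2\ge0$, the Sobolev embedding ($2<p+1<6$) gives $I(u)\ge\tfrac12\norm{u}_{H^1}^2+\tfrac14\int_{\R^3}\rho\phi_u u^2-C\norm{u}_{H^1}^{p+1}$, and discussing the size of $\norm{u}_{H^1}$ relative to $\norm{u}_E$ produces $r,\alpha>0$ with $I\ge\alpha$ on $\{\norm{u}_E=r\}$, so the min-max level $c$ of \eqref{minmax level0} satisfies $c\ge\alpha>0$. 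For a descending direction I would use $I(tv)=\tfrac{t^2}{2}\norm{v}_{H^1}^2+\tfrac{t^4}{4}\int_{\R^3}\rho\phi_v v^2-\tfrac{t^{p+1}}{p+1}\int_{\R^3}v_+^{p+1}$: if $p>3$ any fixed $0\le v\in C_c^\infty(\R^3)$ gives $I(tv)\to-\infty$, while if $p=3$ I would take $v=v_\lambda:=w(\cdot/\lambda)$ for a fixed $0\le w\in C_c^\infty(\R^3)$ and $\lambda$ small, so that by continuity of $\rho$ one has $\int_{\R^3}\rho\phi_{v_\lambda}v_\lambda^2=O(\lambda^5)$ whereas $\int_{\R^3}(v_\lambda)_+^{p+1}=O(\lambda^3)$, again forcing $I(tv_\lambda)\to-\infty$. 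The mountain pass theorem (in the form yielding a Palais--Smale sequence without assuming the Palais--Smale condition) then provides $(u_n)\subset E(\R^3)$ with $I(u_n)\to c$ and $I'(u_n)\to0$ in $E(\R^3)^*$.

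Next I would prove that any such Palais--Smale sequence is bounded in $E$. For $p>3$ both $\tfrac12-\tfrac1{p+1}$ and $\tfrac14-\tfrac1{p+1}$ are positive, so $I(u_n)-\tfrac1{p+1}I'(u_n)[u_n]$ bounds $\norm{u_n}_{H^1}^2+\int_{\R^3}\rho\phi_{u_n}u_n^2$ by $C(1+\norm{u_n}_E)$, which, inserted into $\norm{u_n}_E^2=\norm{u_n}_{H^1}^2+(\int_{\R^3}\rho\phi_{u_n}u_n^2)^{1/2}$, closes the estimate. For $p=3$ the Coulomb term disappears from $I(u_n)-\tfrac14 I'(u_n)[u_n]=\tfrac14\norm{u_n}_{H^1}^2$; here I would instead combine this with the relation $I(u_n)\to c$ and the Sobolev bound $\int_{\R^3}(u_n)_+^4\le C\norm{u_n}_{H^1}^4$ to control $\int_{\R^3}\rho\phi_{u_n}u_n^2$ in terms of $\norm{u_n}_{H^1}$, and then close the argument as before. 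I expect this to be the only place where $p=3$ requires extra care.

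The core step is the compactness of Palais--Smale sequences. Up to a subsequence, $u_n\rightharpoonup u$ in $H^1(\R^3)$, $u_n\to u$ in $L^q_{\mathrm{loc}}(\R^3)$ for $q<6$ and a.e., while $\phi_{u_n}\rightharpoonup\phi_u$ in $D^{1,2}(\R^3)$ (the limiting Poisson problem being identified by testing against $C_c^\infty$, using that $\rho$ is locally bounded), and letting $n\to\infty$ in $I'(u_n)[\varphi]$ for $\varphi\in C_c^\infty(\R^3)$ yields $I'(u)=0$. To prove $u\ne0$ --- which is exactly where coercivity enters --- I would argue by contradiction: if $v_n:=u_n-u$ does not vanish in the sense of P.~L.~Lions, then $\int_{B(y_n,1)}v_n^2\ge\delta>0$ along a subsequence; since $v_n\to0$ in $L^2_{\mathrm{loc}}$ this forces $\abs{y_n}\to+\infty$, hence $\int_{B(y_n,1)}u_n^2\ge\delta/4$ for $n$ large, and therefore
\[
\int_{\R^3}\rho\phi_{u_n}u_n^2\ \ge\ \frac{1}{2\omega}\Big(\int_{B(y_n,1)}\rho\,u_n^2\Big)^2\ \ge\ \frac{1}{2\omega}\Big(\inf_{B(y_n,1)}\rho\Big)^2\frac{\delta^2}{16}\ \longrightarrow\ +\infty,
\]
contradicting the boundedness of $(u_n)$ in $E$. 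Thus $(v_n)$ vanishes and, by Lions' lemma, $u_n\to u$ in $L^{p+1}(\R^3)$; in particular $u\ne0$, for $u=0$ would give $u_n\to0$ in $L^{p+1}$ and then $I'(u_n)[u_n]\to0$ would force $I(u_n)\to0\ne c$. Feeding the strong $L^{p+1}$ convergence together with $I'(u_n)[u_n]\to0$ into $I'(u)[u]=0$ yields $\norm{u_n}_{H^1}^2+\int_{\R^3}\rho\phi_{u_n}u_n^2\to\norm{u}_{H^1}^2+\int_{\R^3}\rho\phi_u u^2$; combined with weak lower semicontinuity of $\norm{\cdot}_{H^1}$ and Fatou's lemma for the nonnegative, a.e.\ convergent integrand $\rho\phi_{u_n}u_n^2$, this forces $\norm{u_n}_{H^1}\to\norm{u}_{H^1}$ and $\int_{\R^3}\rho\phi_{u_n}u_n^2\to\int_{\R^3}\rho\phi_u u^2$ separately, so $u_n\to u$ in $H^1(\R^3)$, $I(u_n)\to I(u)$, and hence $I(u)=c$; a Brezis--Lieb splitting of the Coulomb functional upgrades this to $u_n\to u$ in $E(\R^3)$.

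Finally, testing $-\Delta u+u+\rho\phi_u u=u_+^{\,p}$ with $\min(u,0)$ and using $\rho\phi_u\ge0$ gives $u\ge0$, after which $-\Delta u+(1+\rho\phi_u)u=u^{p}\ge0$ with $1+\rho\phi_u\ge0$ locally bounded, so the strong maximum principle gives $u>0$; and $\phi_u(x)=\omega^{-1}\int_{\R^3}\rho(y)u^2(y)\abs{x-y}^{-1}\dif y>0$ since $\rho u^2\not\equiv0$. Hence $(u,\phi_u)\in E(\R^3)\times D^{1,2}(\R^3)$ is a positive solution of \eqref{main SP system} and $u$ is a mountain pass critical point of $I$ at level $c$. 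I expect the compactness step to be the main obstacle: it is precisely because a profile escaping to infinity would carry infinite Coulomb energy --- here coercivity of $\rho$ plays the role usually played by a translation-invariant limiting problem --- that no mass is lost, and this mechanism is what has to be made quantitative; the boundedness of Palais--Smale sequences when $p=3$ is a secondary difficulty caused by the degeneration of the standard Pohozaev--Nehari combination.
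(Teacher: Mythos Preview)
Your argument is correct and more self-contained than the paper's, which is deliberately brief: there, the coercivity hypothesis is used only to invoke the compact embedding $E(\R^3)\hookrightarrow L^{p+1}(\R^3)$ (Lemma~\ref{compact embedding}, proved by testing $-\Delta\phi_u=\rho u^2$ with $|u|$ to get $E\hookrightarrow L^3_\rho$ and then letting $\rho\to+\infty$ kill the $L^3$ tails), after which existence is quoted from \cite{Bonheure and Mercuri}. Your compactness step is genuinely different: instead of a compact embedding you argue directly that an escaping profile would force $\int_{\R^3}\rho\phi_{u_n}u_n^2\ge\tfrac{1}{2\omega}\big(\inf_{B(y_n,1)}\rho\big)^2\big(\tfrac{\delta}{4}\big)^2\to+\infty$, contradicting boundedness in $E$. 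This is a cleaner mechanism in that it uses the Coulomb energy itself rather than the auxiliary $L^3_\rho$ space, and it makes transparent exactly where coercivity enters; the price is that it is tailored to Palais--Smale sequences, whereas the paper's Lemma~\ref{compact embedding} is a reusable tool (indeed it is invoked again in the proofs of Theorem~\ref{theorem existence coercive} and Corollary~\ref{theorem existence groundstate coercive}). Your treatment of the mountain pass geometry at $p=3$ via the shrinking $v_\lambda=w(\cdot/\lambda)$ also differs from the paper's use of the scaling $v_t(x)=t^2u(tx)$ in Lemma~\ref{claim1}; both produce a path to $-\infty$, but the paper's scaling handles all $p>2$ uniformly.
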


\begin{proof}
Since $\rho(x) \to +\infty$ as $|x| \to +\infty$, then $E$ is compactly embedded in $L^{p+1}(\R^3)$ by Lemma \ref{compact embedding} below, and therefore the existence of a Mountain Pass solution $u$ to \eqref{main SP system} is provided by Theorem 1 of \cite{Bonheure and Mercuri}. 
Both $u,\phi_u$ are positive by the strong maximum principle, and this concludes the proof.
\end{proof}

\bigskip

\indent It is also worth finding conditions such that the term $\rho u^2$ goes to zero at infinity, since the whole right hand side of the Poisson equation is classically interpreted as a `charge density'. This is provided by the following.\\

\begin{proposition}\label{charge decay}[{\bf Decay of $u$ and $\rho u^2$}]
Let $\rho: \R^3\rightarrow \R$ be continuous and nonnegative, $p\in[1,5],$ and $(u,\phi_u)\in E(\R^3)\times D^{1,2}(\R^3)$ be solution to (\ref{main SP system}).
Assume that $u$
is nonnegative. Then, for every $\gamma \in (0,1)$, there exists $C>0$ such that \\

$u(x)\leq C e^{-\gamma(1+|x|)}\qquad \qquad \textrm{($L^2$-decay)}.$ \\

 \noindent If, in addition, $\rho$ is such that \\

$(i)$ $\liminf_{|x|\rightarrow \infty}\rho(x)|x|^{1-2\alpha}>A$ \\

$(ii)$ $\limsup_{|x|\rightarrow \infty} \rho(x)e^{-\beta(1+|x|)^\alpha}\leq B$ \\

\noindent for some $\alpha,\beta, A,B>0$, with $\beta<2\sqrt A,$ 
then, for some constant $C>0$, it holds that \\

$(a)$ $u(x)\leq C e^{-\sqrt A(1+|x|)^\alpha}$ \\

\noindent and therefore\\


 
 $(b)$ $ \rho(x)u^2(x)=O(e^{(\beta-2\sqrt A)(1+|x|)^\alpha}), \qquad \textrm{as}\,\,|x|\rightarrow +\infty.$ \\ 
\end{proposition}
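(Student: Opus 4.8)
The plan is to prove the three decay estimates in sequence, bootstrapping each from the previous one by comparison-principle arguments applied to the Schr\"odinger-type equation satisfied by $u$. Throughout we use that $\phi_u \geq 0$ (so the term $\rho \phi_u u$ has a favourable sign) and that $u$ solves $-\Delta u + u = |u|^{p-1}u - \rho \phi_u u \leq |u|^{p-1}u$ in the weak, and by elliptic regularity classical, sense.

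\medskip

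\noindent\textbf{Step 1: the $L^2$-decay.} First I would recall that $u \in E(\R^3) \subseteq H^1(\R^3)$, and by standard elliptic regularity together with the subcritical (or critical) growth of the right-hand side, $u \in L^\infty(\R^3)$ and $u(x) \to 0$ as $|x| \to +\infty$ (the latter e.g.\ via a Brezis--Kato / Moser iteration giving $u \in L^q$ for large $q$, then $W^{2,q}_{\mathrm{loc}}$ estimates and Sobolev embedding, plus a covering argument). Hence there is $R_0$ with $|u(x)|^{p-1} \leq 1-\gamma^2$ for $|x| \geq R_0$. On that region, $-\Delta u + \gamma^2 u \leq -\Delta u + (1 - |u|^{p-1})u = -\rho\phi_u u \leq 0$. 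Comparing with the barrier $w(x) = C e^{-\gamma|x|}$, which satisfies $-\Delta w + \gamma^2 w \geq 0$ for $|x|$ bounded away from $0$ (indeed $-\Delta(e^{-\gamma|x|}) + \gamma^2 e^{-\gamma|x|} = (2\gamma/|x|) e^{-\gamma|x|} \geq 0$), and choosing $C$ large enough that $w \geq u$ on $\{|x| = R_0\}$, the maximum principle on the exterior domain $\{|x| > R_0\}$ (using $u \to 0$ at infinity to rule out a positive interior maximum of $u - w$) yields $u(x) \leq C e^{-\gamma|x|} \leq C' e^{-\gamma(1+|x|)}$. Since $\gamma \in (0,1)$ is arbitrary this gives part of the statement.

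\medskip

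\noindent\textbf{Step 2: the faster decay $(a)$ under $(i)$--$(ii)$.} Now I would use hypothesis $(i)$: $\liminf_{|x|\to\infty}\rho(x)|x|^{1-2\alpha} > A$, so $\rho(x) \geq A |x|^{2\alpha - 1}$ for $|x|$ large. Combined with $\phi_u(x) \geq c_1 |x|^{-1}$ for $|x|$ large — which follows because $\phi_u(x) = \int \rho(y)u^2(y)/(\omega|x-y|)\,dy$ and the (positive) numerator is not identically zero, so $|x|\phi_u(x) \to (1/\omega)\int \rho u^2 > 0$ — one gets $\rho(x)\phi_u(x) \geq c_2 |x|^{2\alpha-2}$. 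Then for $|x|$ large, $u$ satisfies
\begin{equation}\nonumber
-\Delta u + \left(1 - |u|^{p-1} + c_2|x|^{2\alpha - 2}\right) u \leq 0, \qquad \text{i.e.} \qquad -\Delta u + V(x)\, u \leq 0
\end{equation}
with $V(x) \geq \tfrac12 + c_2|x|^{2\alpha-2}$ eventually (absorbing $|u|^{p-1} \to 0$). The key is to produce a supersolution of the form $w(x) = C\exp(-\sqrt{A}(1+|x|)^\alpha)$. Computing $\Delta w$ for a radial function $f(|x|)$ with $f(r) = C e^{-\sqrt A (1+r)^\alpha}$, one has $f'(r) = -\sqrt A \alpha (1+r)^{\alpha-1} f(r)$ and
\begin{equation}\nonumber
\Delta w = f''(r) + \frac{2}{r} f'(r) = \left[ A\alpha^2 (1+r)^{2\alpha - 2} - \sqrt A \alpha(\alpha - 1)(1+r)^{\alpha - 2} - \frac{2\sqrt A \alpha}{r}(1+r)^{\alpha-1} \right] f(r).
\end{equation}
Thus $-\Delta w + V w \geq \big[ V(x) - A\alpha^2(1+|x|)^{2\alpha-2} + o((1+|x|)^{2\alpha-2}) \big] w$. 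When $\alpha < 1$ the dominant term in $V$ for large $|x|$ is the constant $\tfrac12$, which beats $A\alpha^2(1+|x|)^{2\alpha-2} \to 0$, so $-\Delta w + Vw \geq 0$ for $|x|$ large; when $\alpha = 1$ one needs $c_2 \geq A$, which is exactly what $\liminf \rho(x)|x|^{-1} > A$ delivers (here $\alpha = 1/2$ is the borderline exponent for which the $|x|^{2\alpha-2}$ and the $2\sqrt A/r \cdot (1+r)^{\alpha-1}$ terms are of the same order $|x|^{-1}$ — I would check the constants work out, using that $A\alpha^2 < A$ for $\alpha<1$ to absorb the first-order corrections). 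Choosing $C$ large so $w \geq u$ on a large sphere $\{|x| = R_1\}$ and applying the comparison principle on the exterior domain (again $u \to 0$ at infinity) gives $(a)$.

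\medskip

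\noindent\textbf{Step 3: decay of the charge density $(b)$.} This is immediate: by $(a)$, $u^2(x) \leq C^2 e^{-2\sqrt A(1+|x|)^\alpha}$, and by $(ii)$, $\rho(x) \leq (B+1) e^{\beta(1+|x|)^\alpha}$ for $|x|$ large, so $\rho(x)u^2(x) \leq C' e^{(\beta - 2\sqrt A)(1+|x|)^\alpha}$, which is the claimed $O(\cdot)$ statement; the hypothesis $\beta < 2\sqrt A$ ensures the exponent is negative so this genuinely decays.

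\medskip

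\noindent\textbf{Main obstacle.} The delicate point is Step 2: getting the constants right in the supersolution computation at the borderline exponent $\alpha = 1/2$ (and more generally verifying $-\Delta w + Vw \geq 0$ for all $\alpha$ in the relevant range using only $(i)$), and rigorously justifying the exterior-domain comparison principle — one must either invoke that $u - w \to 0$ at infinity and has no positive interior maximum, or use a Stampacchia-type argument testing the equation for $u - w$ against $(u-w)_+$ on $\{|x| > R_1\}$, noting $(u-w)_+$ has compact support there by the $L^2$-decay from Step 1. The lower bound $\phi_u(x) \gtrsim |x|^{-1}$ also deserves a clean statement, but it follows directly from the Newtonian-potential representation and positivity of $\rho u^2$.
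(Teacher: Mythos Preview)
The paper does not prove this from scratch: it invokes Theorem~6 of Bonheure--Mercuri (with a reference also to Bonheure--Van Schaftingen), a ready-made decay estimate for Schr\"odinger--Poisson solutions phrased in terms of the auxiliary weight $W(x)=1+\rho(x)/|x|$, and merely checks its hypotheses --- $\liminf W>\gamma^2$ for the $L^2$-decay, and $\liminf W(x)|x|^{2-2\alpha}>A$ (which follows immediately from $(i)$) for part $(a)$. Part $(b)$ is then trivial, as in your Step~3.

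Your Steps~1 and~3 are correct and standard. Step~2, however, has a real gap when $\alpha\geq 1$. Your effective potential is $V(x)=1-|u|^{p-1}+\rho(x)\phi_u(x)$, and from $\phi_u(x)\geq c_1/|x|$ together with $(i)$ you only obtain $\rho\phi_u\geq A'c_1\,|x|^{2\alpha-2}$, where $c_1$ is essentially $\tfrac{1}{\omega}\int\rho u^2$ --- a fixed, solution-dependent number that may be arbitrarily small. For the barrier $w=Ce^{-\sqrt{A}(1+|x|)^\alpha}$ to be a supersolution you need, to leading order, $V\geq A\alpha^2|x|^{2\alpha-2}$; for $\alpha>1$ this forces $A'c_1\geq A\alpha^2$, and for $\alpha=1$ with $A>1$ it forces $1+A'c_1\geq A$, neither of which follows from $(i)$. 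Your assertion that ``$\liminf\rho(x)|x|^{-1}>A$ delivers $c_2\geq A$'' conflates a lower bound on $\rho/|x|$ with one on $\rho\phi_u$: the factor $c_1$ coming from $\phi_u$ is simply missing and cannot be bootstrapped away, since $|x|\phi_u(x)$ tends to a fixed finite limit. (Incidentally, the borderline is $\alpha=1$, not $\alpha=1/2$.) For $\alpha<1$ the $L^2$-decay from Step~1 already dominates $e^{-\sqrt{A}(1+|x|)^\alpha}$, so your argument succeeds there trivially; the failure occurs precisely in the regime $\alpha\geq 1$ where $(a)$ carries new content. The cited theorem obtains the sharp rate $\sqrt{A}$ by working with $W=1+\rho/|x|$ directly rather than with the effective Schr\"odinger potential $1+\rho\phi_u$; reproducing that requires more of the system's structure than the single-inequality comparison you sketch.
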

\begin{proof}
The conclusion easily follows by Theorem 6 in \cite{Bonheure and Mercuri} (see also \cite{Bonheure and Van Schaftingen}). More precisely, setting $W(x)\coloneqq 1+\frac{\rho(x)}{|x|}$, the $L^2$-decay follows as $W(x)\geq 1$ and therefore 

\[\liminf_{|x|\rightarrow +\infty}W(x)>\gamma^2\]\\
is automatically satisfied for every $\gamma\in(0,1)$. Moreover, note that by $(i)$ it follows that 

\[\liminf_{|x|\rightarrow +\infty}W(x)|x|^{2-2\alpha}\geq\liminf_{|x|\rightarrow +\infty}\rho(x)|x|^{1-2\alpha}>A\]\\
which yields $(a)$ again by Theorem 6 in \cite{Bonheure and Mercuri}. This concludes the proof.
\end{proof}
\bigskip
With these preliminary results in place, we first study the case of coercive $\rho$, namely $\rho(x)\to+\infty$ as $|x|\to+\infty$, and work in the natural setting for this problem, $E(\R^3)$. When $p\in (2,3)$, we make use of the aforementioned `monotonicity trick' exploiting the structure of our functional, in order to construct bounded Palais-Smale sequences for small perturbations of \eqref{main SP system}. We are able to prove that these sequences converge using a compact embedding established in Lemma \ref{compact embedding}. We finally show that these results extend to the original problem and obtain the following theorem.\\

\begin{theorem}\label{theorem existence coercive}[{\bf Coercive case: existence of mountain pass solution for $p\in (2,3)$}]
Suppose $\rho \in C(\R^3) \cap W^{1,1}_{\textrm{loc}
}(\R^3)$ is nonnegative and $\rho(x) \to +\infty$ as $|x| \to +\infty$. Suppose further that $k\rho(x)\leq (x, \nabla \rho)$ for some $k>\frac{-2(p-2)}{(p-1)}$. Then, for any $p \in (2,3)$, there exists a solution, $(u, \phi_{u})\in E(\R^3) \times D^{1,2}(\R^3)$, of \eqref{main SP system}, whose components are positive functions. In particular, $u$ is a mountain pass critical point of $I$ at level $c$, where $c$ is the min-max level defined in \eqref{minmax level0}.\\
\end{theorem}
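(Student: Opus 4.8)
The plan is to run the monotonicity trick of Struwe, in the form developed by Jeanjean and Jeanjean--Tanaka, on a one‑parameter family of functionals, and then to pass to the limit in the parameter. For $\lambda$ in a left neighbourhood $[\delta,1]$ of $1$ I would set
\[
I_\lambda(u)=\frac12\int_{\R^3}\bigl(|\nabla u|^2+u^2\bigr)+\frac14\int_{\R^3}\rho\phi_u u^2-\frac{\lambda}{p+1}\int_{\R^3}u_+^{p+1},
\]
which is of class $C^1$ on $E(\R^3)$ and has the structure $I_\lambda=A-\lambda B$ with $A$ coercive on $E$ and $B(u)=\tfrac1{p+1}\int u_+^{p+1}\ge 0$, exactly as the abstract theorem requires. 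The first point is that the mountain pass geometry holds \emph{uniformly} in $\lambda\in[\delta,1]$: the estimate $I_\lambda(u)\ge\tfrac12\|u\|_{H^1}^2-\tfrac{C}{p+1}\|u\|_{H^1}^{p+1}$ (valid for $\lambda\le1$ by Sobolev, using that $\|\cdot\|_E$ dominates $\|\cdot\|_{H^1}$) shows that every path from $0$ to a fixed endpoint crosses a small $H^1$-sphere on which $I_\lambda\ge\alpha>0$, with $\alpha$ independent of $\lambda$; and a fixed endpoint $e$ with $I_\lambda(e)<0$ for all $\lambda\in[\delta,1]$ can be produced by taking $e=t\psi(\cdot/R)$ with $\psi\in C_c^\infty$, $\psi\ge0$, and $R$ small. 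Here one uses $p>2$: although $p+1<4$ prevents the usual blow‑down along rays, a short scaling computation shows that as $R\to0$ the quadratic‑plus‑quartic part of $I_\delta(t\psi(\cdot/R))$ becomes negligible against the $L^{p+1}$ part at the optimal $t$, so the minimum over $t$ is negative. The abstract result then yields, for a.e.\ $\lambda\in[\delta,1]$, a \emph{bounded} Palais--Smale sequence for $I_\lambda$ at its mountain pass level $c_\lambda$, with $\lambda\mapsto c_\lambda$ non‑increasing; the uniform geometry gives $c_\lambda\in[\alpha,c_\delta]$, and a routine argument gives $\lim_{\lambda\to1^-}c_\lambda=c$, the min‑max level of $I=I_1$ in \eqref{minmax level0}.

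For each such good $\lambda$ I would establish the Palais--Smale condition at $c_\lambda$. Given a bounded PS sequence $(u_n)$, extract $u_n\rightharpoonup u$; since $\rho$ is coercive, Lemma~\ref{compact embedding} gives $E\hookrightarrow\hookrightarrow L^{p+1}(\R^3)$, so $u_n\to u$ in $L^{p+1}$, which handles the nonlinear term and forces $u\neq0$ (otherwise $\langle I_\lambda'(u_n),u_n\rangle\to0$ would give $\|u_n\|_{H^1}\to0$ and $I_\lambda(u_n)\to0\neq c_\lambda$). Feeding this into $\langle I_\lambda'(u_n),u_n-u\rangle\to0$, together with weak convergence in $E$ and the continuity of $u\mapsto\int\rho\phi_u u^2$ along bounded, strongly $L^{p+1}$-convergent sequences, one obtains $u_n\to u$ strongly in $E$; hence $u$ is a nontrivial critical point of $I_\lambda$ at level $c_\lambda$, i.e.\ an exact solution of $-\Delta u+u+\rho\phi_u u=\lambda u_+^p$.

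Finally, pick a sequence $\lambda_j\to1^-$ of good values and the corresponding nontrivial solutions $u_j$ with $I_{\lambda_j}(u_j)=c_{\lambda_j}$. The crucial step — which I expect to be the main obstacle — is to bound $(u_j)$ in $E$ \emph{uniformly in $j$}, since the monotonicity trick only provides $\lambda$-dependent bounds that could blow up as $\lambda\to1$. Each $u_j$ satisfies the Nehari identity $\|u_j\|_{H^1}^2+\int\rho\phi_{u_j}u_j^2=\lambda_j\int(u_j)_+^{p+1}$ and, thanks to $\rho\in W^{1,1}_{\mathrm{loc}}(\R^3)$, the exponential decay of Proposition~\ref{charge decay}, and elliptic regularity, a Pohozaev identity carrying the term $\int(x\cdot\nabla\rho)\phi_{u_j}u_j^2$. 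Eliminating $\int(u_j)_+^{p+1}$ between these two identities and inserting the result into $I_{\lambda_j}(u_j)=c_{\lambda_j}$, while using $x\cdot\nabla\rho\ge k\rho$ and $\phi_{u_j}u_j^2\ge0$ to bound the $\nabla\rho$-term from below by $k\int\rho\phi_{u_j}u_j^2$, one arrives at a linear inequality among $\int|\nabla u_j|^2$, $\int u_j^2$ and $\int\rho\phi_{u_j}u_j^2$ whose coefficients are positive precisely when $k>\tfrac{-2(p-2)}{(p-1)}$; this yields $\|u_j\|_{H^1}^2+\int\rho\phi_{u_j}u_j^2\le C\,c_{\lambda_j}\le C\,c_\delta$, hence a bound in $E$. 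With this bound, $I_1(u_j)=c_{\lambda_j}+\tfrac{\lambda_j-1}{p+1}\int(u_j)_+^{p+1}\to c$ and $I_1'(u_j)=(\lambda_j-1)(u_j)_+^p\to0$ in $E'$, so $(u_j)$ is a bounded PS sequence for the original functional $I$ at level $c$. Applying once more the compactness argument of the second paragraph gives $u_j\to\bar u$ strongly in $E$, with $\bar u\neq0$ a critical point of $I$ at level $c$, i.e.\ a solution of \eqref{SP one equation}; testing the equation with $\bar u_-$ shows $\bar u\ge0$, and the strong maximum principle then gives $\bar u>0$ and $\phi_{\bar u}>0$, so $(\bar u,\phi_{\bar u})$ is the desired positive mountain pass solution.
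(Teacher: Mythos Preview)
Your proposal is correct and follows essentially the same route as the paper: the monotonicity trick to obtain bounded Palais--Smale sequences for $I_\lambda$ at almost every $\lambda$, the compact embedding $E\hookrightarrow\hookrightarrow L^{p+1}$ (coercive $\rho$) to pass to a critical point $u_\lambda$, the combination of the energy, Nehari and Pohozaev identities together with $x\cdot\nabla\rho\ge k\rho$ to bound $(u_{\lambda_j})_j$ uniformly as $\lambda_j\to1^-$, and a final compactness step to reach a critical point of $I$ at level $c$. The only cosmetic differences are your choice of scaling $t\psi(\cdot/R)$ versus the paper's $t^2u(tx)$ for the endpoint of the mountain pass path, and that the paper treats the convergence of the Poisson term $\int\rho\phi_{u_n}u_n^2$ via explicit tightness estimates rather than the one-line continuity claim you make; the latter deserves a word of justification (e.g.\ via weak lower semicontinuity of $\|\nabla\phi_{u_n}\|_2^2$ combined with convergence of $\|u_n\|_{H^1}^2+\|\nabla\phi_{u_n}\|_2^2$ coming from the Nehari identity and $L^{p+1}$-convergence), but the argument goes through.
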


After establishing these results, we prove the existence of least energy solutions for all $p\in(2,5)$. It is important to note that for $p\in(3,5)$ the solutions provided by the following corollary coincide with those provided by Theorem \ref{theorem existence coercive0}. For $p\in(2,3]$, we make use of a minimising sequence in order to obtain the result, however we do not know whether the least energy solutions provided by what follows are the same as those provided by Theorem \ref{theorem existence coercive0} ($p=3$) and Theorem \ref{theorem existence coercive}.\\

 \begin{corollary}\label{theorem existence groundstate coercive}[{\bf Coercive case: existence of a least energy solution for $p\in(2,5)$}]
Suppose $\rho \in C(\R^3)$ is nonnegative and $\rho(x) \to +\infty$ as $|x| \to +\infty$. If $p\in(2,3)$, suppose, in addition, that $\rho \in W^{1,1}_{\textrm{loc}
}(\R^3)$ and $k\rho(x)\leq (x, \nabla \rho)$ for some $k>\frac{-2(p-2)}{(p-1)}$. Then, for all $p \in (2,5)$, there exists a solution, $(u, \phi_{u})\in E(\R^3) \times D^{1,2}(\R^3)$, of \eqref{main SP system}, whose components are positive functions, such that $u$ is a least energy critical point of $I$. \\
\end{corollary}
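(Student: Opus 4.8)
The plan is to obtain a least energy solution as a minimiser of $I$ over the set of all of its nontrivial critical points. I would set
\[ \mathcal{S} \coloneqq \{\, u \in E(\R^3)\setminus\{0\} \ :\ I'(u)=0 \,\}, \qquad m \coloneqq \inf_{u\in\mathcal{S}} I(u), \]
so that, through the representation of $\phi_u$, the first components of the solutions of \eqref{main SP system} are precisely the elements of $\mathcal{S}$. By Theorem \ref{theorem existence coercive0} (if $p\in[3,5)$) and Theorem \ref{theorem existence coercive} (if $p\in(2,3)$) the mountain pass critical point already constructed belongs to $\mathcal{S}$, so $\mathcal{S}\neq\emptyset$ and $m<+\infty$; also, testing $I'(u)u=0$ and invoking $H^1(\R^3)\hookrightarrow L^{p+1}(\R^3)$ gives a uniform lower bound $\|u\|_{H^1(\R^3)}\geq\rho_0>0$ for all $u\in\mathcal{S}$.

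The first task would be the a priori bound: \emph{$m>0$, and every minimising sequence in $\mathcal{S}$ is bounded in $E(\R^3)$.} Writing $A_1=\int_{\R^3}|\nabla u|^2$, $A_2=\int_{\R^3}u^2$, $C=\int_{\R^3}\rho\phi_u u^2$, the Nehari identity $I'(u)u=0$ reads $A_1+A_2+C=\int_{\R^3}u_+^{p+1}$, and substituting it into \eqref{definition I} would give, for $u\in\mathcal{S}$,
\[ I(u)=\frac{p-1}{2(p+1)}(A_1+A_2)+\frac{p-3}{4(p+1)}\,C. \]
For $p\in[3,5)$ both coefficients are nonnegative, hence $I\geq0$ on $\mathcal{S}$, minimising sequences are bounded in $H^1(\R^3)$, $C$ is then bounded via Nehari and Sobolev, so they are bounded in $E(\R^3)$; combined with $\|u\|_{H^1(\R^3)}\geq\rho_0$ this yields $m>0$. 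For $p\in(2,3)$ the coefficient of $C$ is negative, and here the extra hypotheses $\rho\in W^{1,1}_{\textrm{loc}}(\R^3)$ and $k\rho\leq(x,\nabla\rho)$ would enter through the Pohozaev identity: elliptic regularity together with the exponential decay of Proposition \ref{charge decay} make it legitimate to multiply \eqref{SP one equation} by $x\cdot\nabla u$ and integrate, and, using also $-\Delta\phi_u=\rho u^2$ and \eqref{double integral}, this leads to
\[ \tfrac12 A_1+\tfrac32 A_2+\tfrac54 C+\tfrac12\int_{\R^3}(x,\nabla\rho)\phi_u u^2=\frac{3}{p+1}\int_{\R^3}u_+^{p+1}. \]
Eliminating the right-hand side by Nehari and using $\int_{\R^3}(x,\nabla\rho)\phi_u u^2\geq k\,C$ (valid because $\phi_u u^2\geq0$ and $k\rho\leq(x,\nabla\rho)$), a rearrangement would give
\[ \frac{5-p}{2(p+1)}A_1 \ \geq\ \frac{3(p-1)}{2(p+1)}A_2+\Bigl(\frac{5p-7}{4(p+1)}+\frac k2\Bigr)C. \]
Since $p\in(2,3)$ and $k>-\tfrac{2(p-2)}{p-1}$ together force $k>-\tfrac{5p-7}{2(p+1)}$, the coefficient of $C$ here is positive, so $A_2$ and $C$ are controlled by $A_1$; feeding $C\leq\kappa A_1$ back into the expression for $I(u)$ would give $I(u)\geq c_0 A_1$, and a short computation shows the resulting $c_0$ is positive \emph{precisely} when $k>-\tfrac{2(p-2)}{p-1}$. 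Again one concludes $I\geq0$ on $\mathcal{S}$, boundedness of minimising sequences in $E(\R^3)$, and $m>0$.

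Next I would run the compactness argument. Given a minimising sequence $(u_n)\subseteq\mathcal{S}$ with $I(u_n)\to m$, after passing to a subsequence $u_n\rightharpoonup u$ in $E(\R^3)$ and, by Lemma \ref{compact embedding}, $u_n\to u$ in $L^{p+1}(\R^3)$ and a.e. One checks that $u$ solves \eqref{SP one equation} weakly: the linear part passes by weak convergence, $\int_{\R^3}(u_n)_+^p v\to\int_{\R^3}u_+^p v$ by the strong $L^{p+1}$ convergence, and $\int_{\R^3}\rho\phi_{u_n}u_n v\to\int_{\R^3}\rho\phi_u u v$ as in the proof of Theorem \ref{theorem existence coercive} (the family $\phi_{u_n}$ is bounded in $D^{1,2}(\R^3)$ since $\|\phi_{u_n}\|_{D^{1,2}}^2=\int_{\R^3}\rho\phi_{u_n}u_n^2$ is, and its weak limit is identified with $\phi_u$ from $-\Delta\phi_{u_n}=\rho u_n^2$ and the local convergence of $u_n$). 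From $\int_{\R^3}u_+^{p+1}=\lim_n\int_{\R^3}(u_n)_+^{p+1}\geq\rho_0^2>0$ the limit is nontrivial, hence $u\in\mathcal{S}$. Finally, weak lower semicontinuity of $\|\cdot\|_{H^1(\R^3)}^2$ and of $u\mapsto\int_{\R^3}\rho\phi_u u^2=\|\phi_u\|_{D^{1,2}(\R^3)}^2$, together with the strong convergence of the $L^{p+1}$ term, would give $I(u)\leq\liminf_n I(u_n)=m$; since $u\in\mathcal{S}$ this forces $I(u)=m$. Positivity of $u$ and $\phi_u$ then follows from the strong maximum principle, as in the proof of Theorem \ref{theorem existence coercive0}.

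I expect the a priori bound for $p\in(2,3)$ to be the main obstacle: one has to derive the Pohozaev identity for the non-autonomous system---justifying the integration by parts through the decay estimates of Proposition \ref{charge decay}---and then combine it with the Nehari identity so that the sign condition $k>-\tfrac{2(p-2)}{p-1}$ is exactly what makes $I$ coercive along $\mathcal{S}$. By contrast, once Lemma \ref{compact embedding} and the Coulomb-term estimates of Theorem \ref{theorem existence coercive} are in hand, the passage to the limit is routine.
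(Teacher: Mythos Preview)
Your proposal is correct and follows the same core strategy as the paper: minimise $I$ over the set of nontrivial critical points, use the compact embedding from Lemma~\ref{compact embedding}, and for $p\in(2,3)$ combine the Nehari and Pohozaev identities under the hypothesis $k\rho\le(x,\nabla\rho)$ to get the a~priori bound. Two execution differences are worth noting. First, the paper splits cases: for $p>3$ it invokes the Nehari characterisation $c=\inf_{\mathcal N}I$ to conclude that the mountain pass solution of Theorem~\ref{theorem existence coercive0} is already a least energy solution, and only runs the minimising-sequence argument for $p\in(2,3]$; you treat all $p\in(2,5)$ uniformly by minimising over $\mathcal S$, which is slightly less economical for $p>3$ but perfectly valid. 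Second, in the passage to the limit the paper upgrades to \emph{strong} convergence $w_n\to w_0$ in $E$ (by the machinery of Theorem~\ref{theorem existence coercive}, Claim~1) and then reads off $I(w_0)=c^*$, whereas you use weak lower semicontinuity of the $H^1$ and Coulomb terms together with strong $L^{p+1}$ convergence to get $I(u)\le m$ directly; your route is the more elementary one here. Finally, for the Pohozaev identity you do not need to invoke the decay of Proposition~\ref{charge decay}: the paper's Lemma~\ref{pohozaevlemma} already provides it under the standing assumption $\rho\in L^\infty_{\mathrm{loc}}\cap W^{1,1}_{\mathrm{loc}}$, and (as you implicitly use) every critical point of $I$ is automatically nonnegative by testing $I'(u)=0$ against $u_-$, so $|u|^{p+1}=u_+^{p+1}$ and the identity applies verbatim.
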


\begin{remark} 
If we define 

\begin{equation}\label{def of F}
\mathcal I(u) \coloneqq \frac{1}{2}\int_{\R^3}(|\nabla u|^2 + u^2)+\frac{1}{4}\int_{\R^3}  \rho\phi_u u^2 -\frac{1}{p+1}\int_{\R^3}|u|^{p+1},
\end{equation}\\
then, under the same assumptions on $\rho$ as in Corollary \ref{theorem existence groundstate coercive}, we can prove the existence of a least energy critical point for $\mathcal I$ for all $p\in(2,5)$  by following similar techniques to those used in the proof of Corollary \ref{theorem existence groundstate coercive}. Since for $p>3$ the mountain pass level is equal to the infimum on the Nehari manifold, in this range it is possible to select a positive groundstate critical point for $\mathcal I.$ It is not clear whether this is also the case for $p\in (2,3].$  \\ 
\end{remark}
\medskip

We then focus on the case of non-coercive $\rho$, namely when $\rho(x) \to \rho_{\infty}>0$ as $|x|\to +\infty$. For this problem, $E(\R^3)$ coincides with the larger space $H^1(\R^3)$, and so we look for solutions $(u,\phi_u)\in H^1(\R^3) \times D^{1,2}(\R^3)$ of \eqref{main SP system}. Our method relies on an {\it a posteriori} compactness analysis of bounded Palais-Smale sequences (in the spirit of the classical book of M.\ Willem \cite{Willem}), in which we find that any possible lack of compactness is related to the invariance by translations of the subcritical `problem at infinity' associated to \eqref{SP one equation perturbed}, namely 

\begin{equation}\label{SP infinity}
 - \Delta u+   u +\rho_{\infty} \bar{\phi}_u  u = |u|^{p-1} u,
 \end{equation}\\
\noindent where $\bar{\phi}_u (x) \coloneqq \int_{\R^3}\frac{\rho_{\infty}u^2(y)}{\omega |x-y|} \dif{y} \in D^{1,2}(\R^3)$. Positive solutions of \eqref{SP infinity} are critical points of the corresponding functional, $I^{\infty}:H^1(\R^3) \to \R$, defined as

\begin{equation}\label{definition I infinity}
I^{\infty}(u) \coloneqq \frac{1}{2}\int_{\R^3}(|\nabla u|^2 +  u^2)+\frac{1}{4}\int_{\R^3}\rho_{\infty} \bar{\phi}_u u^2  -\frac{1}{p+1}\int_{\R^3}u_+^{p+1}.
\end{equation}\\
\noindent When $p\in(2,3)$, we define perturbations of $I$ and $I^{\infty}$, namely $I_{\mu}$ and $I^{\infty}_{\mu}$ (see Section \ref{mu levels}), as follows

\begin{equation*}
I_{\mu}(u) \coloneqq \frac{1}{2}\int_{\R^3}(|\nabla u|^2 +  u^2)+\frac{1}{4}\int_{\R^3} \rho \phi_u u^2 -\frac{\mu}{p+1}\int_{\R^3}u_+^{p+1},\\
\end{equation*}\\
and 
\begin{equation*}
I_{\mu}^{\infty}(u) \coloneqq \frac{1}{2}\int_{\R^3}(|\nabla u|^2 +  u^2)+\frac{1}{4}\int_{\R^3}\rho_{\infty} \bar{\phi}_u u^2  -\frac{\mu}{p+1}\int_{\R^3}u_+^{p+1}.
\end{equation*}\\

The aforementioned {\it a posteriori} compactness analysis is provided by the following proposition. There are several compactness results of similar flavour since the pioneering works of P.L. Lions \cite{Lions} and Benci-Cerami \cite{Benci Cerami}, which include more recent contributions in the context of Schr\"odinger-Poisson systems, see e.g.\ \cite{Cerami and Vaira}, \cite{Vaira}, \cite{Cerami and Molle arXiv}. We point out that these recent results are mostly in the range $p>3$, for Palais-Smale sequences constrained on Nehari manifolds, and for functionals without positive parts, unlike our result.

\begin{proposition} \label{splitting theorem} [{\bf Global compactness for bounded PS sequences}]
Suppose $\rho \in C(\R^3)$ is nonnegative and $\rho(x) \to \rho_{\infty}\geq0$ as $|x| \to +\infty$. Let  $p\in(2,5)$ and $\mu \in \left[\frac{1}{2},1\right]$ and assume $(u_n)_{n\in\N}\subset H^1(\R^3)$ is a bounded Palais-Smale sequence for $I_{\mu}$. Then, there exists $l\in \N$, a finite sequence $(v_0,\ldots, v_l)\subset H^1(\R^3)$, and $l$ sequences of points $(y_n^j)_{n\in\N}\subset \R^3$, $1\leq j \leq l$, satisfying, up to a subsequence of $(u_n)_{n\in\N}$,\\
\begin{enumerate}[(i)]
\item $v_0$ is a nonnegative solution of \eqref{SP one equation perturbed},\\
\item $v_j$ are nonnegative, and possibly nontrivial, solutions of \eqref{SP one equation perturbed infinity} for $1\leq j \leq l$,\\
\item $| y_n^j | \to +\infty$, $|y_n^j - y_n^{j'}| \to +\infty$ as $n\to+\infty$ if $j\neq j'$,\\
\item $||u_n-v_0- \sum_{j=1}^{l} v_j(\cdot -y_n^j)||_{H^1(\R^3)}\to 0$ as $n\to+\infty$,\\
\item $||u_n||_{H^1(\R^3)}^2 \to \sum_{j=0}^{l} ||v_j||_{H^1(\R^3)}^2$ as $n\to+\infty$,\\
\item $I_{\mu}(u_n) = I_{\mu}(v_0)+\sum_{j=1}^{l}I_{\mu}^{\infty}(v_j)+o(1)$.\\
\end{enumerate}
\end{proposition}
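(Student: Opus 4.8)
The plan is to follow the classical concentration–compactness / global compactness scheme (as in Willem's book or Benci–Cerami), adapted to the Schrödinger–Poisson setting and to the presence of the positive part $u_+^{p+1}$. I would argue by induction on the number of "bubbles" extracted. The starting point is that $(u_n)$ is bounded in $H^1(\R^3)$, so up to a subsequence $u_n \rightharpoonup v_0$ weakly in $H^1$, strongly in $L^q_{\mathrm{loc}}$ for $q\in[1,6)$, and a.e. Using the local compactness together with the continuity of the Coulomb term (via the Hardy–Littlewood–Sobolev bound \eqref{HLS intro} and the fact that $u_n^2\rho$ is bounded in $L^{6/5}$ because $p\in(2,5)$ and $\rho$ is bounded), one checks that $I_\mu'(u_n)\to 0$ passes to the limit, so that $v_0$ is a (nonnegative, since the $u_{n,+}^{p+1}$ term forces nonnegativity of weak limits in the usual way, or rather one works with $u_{n,-}\to 0$) weak solution of \eqref{SP one equation perturbed}. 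This gives (i). Set $z_n^1 := u_n - v_0$; then $z_n^1\rightharpoonup 0$, and by the Brezis–Lieb lemma applied to both the $L^{p+1}$ term and (with more care) the quartic Coulomb term, one has $I_\mu(u_n) = I_\mu(v_0) + \tilde I_\mu(z_n^1) + o(1)$ and $\|u_n\|^2 = \|v_0\|^2 + \|z_n^1\|^2 + o(1)$, where $\tilde I_\mu$ is the functional with $\rho$ replaced by its behavior seen from infinity — this is where the passage to the limiting functional $I_\mu^\infty$ enters.

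Next comes the dichotomy step. If $z_n^1\to 0$ strongly in $H^1$ we are done with $l=0$. Otherwise, $\limsup_n \|z_n^1\|_{L^{p+1}}>0$ (one rules out vanishing via the nonlinear term in the equation: if $z_n^1$ vanished in the sense of Lions, then $\|z_n^1\|_{L^q}\to 0$ for $q\in(2,6)$, and combined with $\langle (\tilde I_\mu)'(z_n^1), z_n^1\rangle \to 0$ — which itself requires showing the Coulomb term is also $o(1)$ along $z_n^1$ — this forces $\|z_n^1\|_{H^1}\to 0$, a contradiction). Hence by Lions' lemma there exist $(y_n^1)$ with $\int_{B_1(y_n^1)} |z_n^1|^2 \geq \delta > 0$. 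One must have $|y_n^1|\to+\infty$: otherwise $z_n^1(\cdot + y_n^1)$ would have a nonzero weak limit while $z_n^1\rightharpoonup 0$ and the $y_n^1$ stay bounded, contradiction. Then $z_n^1(\cdot + y_n^1)\rightharpoonup v_1 \neq 0$, and because $\rho(\cdot + y_n^1)\to \rho_\infty$ locally uniformly, $v_1$ solves the problem at infinity \eqref{SP one equation perturbed infinity}, i.e. (ii) for $j=1$. Set $z_n^2 := z_n^1 - v_1(\cdot - y_n^1)$ and repeat.

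The induction terminates because each extracted $v_j$ is a nontrivial solution of \eqref{SP one equation perturbed infinity}, hence has energy (and $H^1$-norm) bounded below by a fixed positive constant — this uses a uniform lower bound on the $H^1$-norm of nontrivial critical points of $I_\mu^\infty$, which follows from the subcriticality of the nonlinearity and the positivity of the Coulomb term (so the energy is coercive enough on the relevant set), uniformly in $\mu\in[1/2,1]$. Combined with the iterated energy/norm splitting $\|u_n\|^2 = \sum_{j=0}^{k}\|v_j\|^2 + \|z_n^{k+1}\|^2 + o(1)$ and the boundedness of $\|u_n\|$, only finitely many bubbles can appear, giving the value $l$. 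The mutual divergence $|y_n^j - y_n^{j'}|\to+\infty$ in (iii) is forced at each stage: if two translation centers stayed at bounded distance, the corresponding profiles would be weak limits of the \emph{same} vanishing sequence along overlapping windows, contradicting $z_n^{j}\rightharpoonup 0$ after subtracting $v_{j'}$. Items (iv), (v), (vi) are then just the telescoped form of the one-step splitting, with (v) from iterated Brezis–Lieb on the $L^2$ and $L^2$-gradient norms and (vi) from iterated Brezis–Lieb on the $L^{p+1}$ and Coulomb terms.

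The main obstacle, and the step deserving the most care, is the behavior of the \emph{nonlocal Coulomb term} under this decomposition: one needs a Brezis–Lieb type splitting $\int \rho \phi_{u_n} u_n^2 = \int \rho \phi_{v_0} v_0^2 + \int \rho_\infty \bar\phi_{z_n} z_n^2 + o(1)$ at each stage, and then its iteration across translated, mutually diverging bubbles, showing that all cross terms $\int \rho_\infty \bar\phi_{v_i(\cdot - y_n^i)} v_j(\cdot - y_n^j)^2$ and mixed interactions vanish as $n\to\infty$. This requires exploiting the decay of $\bar\phi_{v}$ at infinity (it decays like $|x|^{-1}$, and the $v_j$ decay exponentially by Proposition \ref{charge decay} applied to the limiting equation) together with $|y_n^i - y_n^j|\to+\infty$; the quartic and nonlocal nature means one cannot quote the standard Brezis–Lieb lemma directly but must prove a tailored version, using the boundedness of the densities in $L^{6/5}$ and the HLS inequality to control error terms. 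A secondary technical point is handling the positive part $u_+^{p+1}$ consistently — ensuring the extracted profiles are nonnegative and that $\int (u_n)_+^{p+1}$ splits, which works because the negative parts converge strongly to zero (as $v_0\geq 0$ solves the equation and likewise each $v_j\geq 0$), so $(z_n^{j})_- \to 0$ in the relevant norms at every stage.
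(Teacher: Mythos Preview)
Your proposal is correct and follows essentially the same concentration--compactness scheme as the paper: extract the weak limit $v_0$, show it solves \eqref{SP one equation perturbed}, subtract it, apply Lions' dichotomy to the remainder, translate, extract $v_1$ solving the problem at infinity, and iterate, with termination coming from the uniform lower bound $\|v_j\|_{H^1}^2\geq (S_{p+1})^{(p+1)/(p-1)}$.

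The one noteworthy methodological difference is in how the Coulomb cross terms are handled. You propose to kill terms like $\int \rho_\infty \bar\phi_{v_i(\cdot-y_n^i)} v_j(\cdot-y_n^j)^2$ by invoking the exponential decay of the profiles $v_j$ (via Proposition~\ref{charge decay}) together with the $|x|^{-1}$ decay of $\bar\phi$. The paper instead never uses any decay of the profiles: it relies on the nonlocal Brezis--Lieb lemma (Lemma~\ref{nonlocalBL}, from \cite{Bellazzini Frank Visciglia}, \cite{Mercuri Moroz Van Schaftingen}) for the functional splitting, and on a dedicated $H^{-1}$ splitting lemma (Lemma~\ref{dual convergence for splitting}), proved by convexity estimates and Fatou's lemma, for the derivative splitting $\rho\phi_{u_n}u_n - \rho\phi_{u_n-v_0}(u_n-v_0) - \rho\phi_{v_0}v_0 = o(1)$ in $H^{-1}$. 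These tools require only $H^1$-boundedness and a.e.\ convergence, so the iteration works uniformly at every step without ever appealing to the fact that the extracted $v_j$ are solutions with good decay. Your route is also valid, but the paper's is more self-contained and avoids the slight circularity of needing qualitative properties of the $v_j$ before the splitting is complete. The handling of the positive part $(u_n)_+^{p+1}$ is the same in both: test $I_\mu'(u_n)$ against $(u_n)_-$ to get $(u_n)_-\to 0$ in $H^1$, then use the Mercuri--Willem refinement of Brezis--Lieb (Lemmas~\ref{MW 1} and~\ref{MW 2}) to propagate $(z_n^j)_-\to 0$ in $L^{p+1}$ through each stage.
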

\begin{remark}
In the case $\rho_{\infty}=0$, the limiting equation \eqref{SP one equation perturbed infinity} reduces to coincide with $\eqref{Kwong}$.\\
\end{remark}
Roughly speaking, the presence of $u_+$ in the functional $I_{\mu}$ and Sobolev's inequality imply that $(u_n)_-\rightarrow 0$ in $L^{p+1}.$ It is therefore possible to use an observation to the classical Brezis-Lieb lemma \cite{Brezis and Lieb} made in \cite{Mercuri and Willem} to show that it also holds that $(u_n-v_0)_-\rightarrow 0$ in $L^{p+1}.$ In the proof we also take advantage of recent nonlocal versions of the Brezis-Lieb lemma, see \cite{Bellazzini Frank Visciglia} and \cite{Mercuri Moroz Van Schaftingen}.\\
\begin{remark}
Dropping the $+$ subscript in the definition of $I_{\mu}$ and simply observing that for every continuous path $\gamma : [0,1]\rightarrow H^1$ it holds that $I_{\mu}(\gamma(t))=I_{\mu}(|\gamma(t)|),$ a quantitative deformation argument (see e.g.\ Lemma 2.3 and Theorem 2.8 in \cite{Willem}) would allow us, for almost every $\mu \in \left[\frac{1}{2}, 1\right]$, to construct a bounded Palais-Smale sequence $(u_{\mu,n})_{n\in\N}$ for $I_\mu$ at the level $c_\mu$ (defined in \eqref{minmax level}) such that 

\[\textrm{dist}(u_{\mu,n},\mathcal P)\rightarrow 0, \qquad \mathcal P = \{u\in H^1(\R^3)\,|\, u_-\equiv 0\}.\]\\
We have opted for a less abstract approach to positivity.\\
\end{remark}

In the case $p\in(2,3)$, we use Proposition \ref{splitting theorem} together with Pohozaev's and Nehari's identities to show that a sequence of approximated critical points, constructed by means of the `monotonicity trick', is relatively compact. This enables us to obtain the following result.\\

\begin{theorem}\label{general theorem}[{\bf Non-coercive case: existence of mountain pass solution for $p\in(2,3)$}]
Suppose $\rho \in C(\R^3) \cap W^{1,1}_{\textrm{loc}
}(\R^3)$ is nonnegative, $\rho(x) \to \rho_{\infty}>0$ as $|x| \to +\infty$, and $k\rho(x)\leq (x, \nabla \rho)$ for some $k>\frac{-2(p-2)}{(p-1)}$. Suppose further that either\\

$(i)$ $c<c^{\infty}$ \\

\noindent or\\

$(ii)$ $\rho(x) \leq \rho_{\infty}$ for all $x\in \R^3$, with strict inequality, $\rho(x) < \rho_{\infty}$, on some ball $B\subset \R^3,$\\

\noindent where $c$ and (resp.) $c^\infty$ are min-max levels defined in (\ref{minmax level0}) and (resp.) (\ref{minmax level00}).
Then, for any $p \in (2,3)$, there exists a solution, $(u, \phi_{u})\in H^1(\R^3) \times D^{1,2}(\R^3)$, of \eqref{main SP system}, whose components are positive functions. In particular, $u$ is a mountain pass critical point of $I$ at level $c$. \\
\end{theorem}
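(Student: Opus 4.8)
The plan is to realise the solution as a limit of critical points of the perturbed functionals $I_\mu$, following the Struwe--Jeanjean monotonicity trick. First I would verify that for every $\mu\in[\tfrac12,1]$ the functional $I_\mu$ has the mountain pass geometry uniformly: the quadratic plus nonnegative Coulomb terms dominate near the origin, while along a fixed direction $tu$ with $\int u_+^{p+1}>0$ the term $-\tfrac{\mu t^{p+1}}{p+1}\int u_+^{p+1}$ drives $I_\mu$ to $-\infty$ since $p+1>3$; hence the min-max level $c_\mu$ is well defined, positive, and nonincreasing in $\mu$. By Jeanjean's theorem, for almost every $\mu\in[\tfrac12,1]$ the functional $I_\mu$ admits a \emph{bounded} Palais-Smale sequence at level $c_\mu$. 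Applying Proposition~\ref{splitting theorem} to this sequence produces a decomposition $u_n = v_0 + \sum_{j=1}^l v_j(\cdot - y_n^j) + o(1)$ with $v_0$ solving the $\mu$-perturbed problem \eqref{SP one equation perturbed} and the $v_j$ solving the $\mu$-perturbed problem at infinity \eqref{SP one equation perturbed infinity}, together with the energy splitting $c_\mu = I_\mu(v_0)+\sum_{j=1}^l I_\mu^\infty(v_j)$.

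The heart of the argument is to rule out $l\geq 1$, i.e.\ to show no mass escapes to infinity, so that $u_n\to v_0$ strongly and $v_0$ is a nontrivial critical point of $I_\mu$ at level $c_\mu$. Here I would use hypotheses $(i)$ or $(ii)$. Under $(i)$, $c<c^\infty$; combined with the monotonicity and a continuity/semicontinuity argument relating $c_\mu$ to $c$ and $c_\mu^\infty$ to $c^\infty$, one gets $c_\mu < c_\mu^\infty$ for $\mu$ near $1$, which is incompatible with a nontrivial $v_j$ carrying energy $I_\mu^\infty(v_j)\geq c_\mu^\infty$ while $I_\mu(v_0)\geq 0$ (the latter because any critical point of $I_\mu$ has nonnegative energy, using Pohozaev and Nehari identities for the perturbed system — this is where $W^{1,1}_{\mathrm{loc}}$ and the sign condition $k\rho\leq(x,\nabla\rho)$ with $k>-\tfrac{2(p-2)}{p-1}$ enter, guaranteeing the Pohozaev-type quantity is coercive and bounds the energy from below). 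Under $(ii)$, the strict pointwise inequality $\rho\leq\rho_\infty$ with strict inequality on a ball forces $I_\mu(w) < I_\mu^\infty(w)$ for any nontrivial $w\geq 0$; in particular the "best" energy is strictly smaller than $c_\mu^\infty$, and the same comparison excludes bubbles at infinity. In either case we conclude $l=0$ and $v_0\not\equiv 0$ (if $v_0\equiv 0$ and $l=0$ then $c_\mu=0$, contradicting $c_\mu>0$).

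Next I would pass to the limit $\mu\to 1^-$ along the full-measure set of good $\mu$'s: we obtain a sequence $u_\mu$ of critical points of $I_\mu$ with $I_\mu(u_\mu)=c_\mu\to c$. The monotonicity trick supplies a uniform bound on $\|u_\mu\|_{H^1}$ — this is the delicate point specific to $p\in(2,3)$ — which I would establish by combining the Nehari identity $I_\mu'(u_\mu)[u_\mu]=0$ with the Pohozaev identity for \eqref{SP one equation perturbed}, eliminating the Coulomb term between them; the condition $k>-\tfrac{2(p-2)}{p-1}$ is exactly what makes the resulting linear combination of $\|\nabla u_\mu\|_2^2$, $\|u_\mu\|_2^2$ and $\int u_{\mu,+}^{p+1}$ control the $H^1$ norm. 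With $(u_\mu)$ bounded, $(u_\mu)$ is itself a bounded Palais-Smale sequence for $I=I_1$ at level $c$; applying Proposition~\ref{splitting theorem} once more (now with $\mu=1$) and repeating the no-vanishing argument above yields strong convergence to a nontrivial critical point $u$ of $I$ at level $c$.

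Finally, positivity: since $I$ involves $u_+^{p+1}$, testing $I'(u)[u_-]=0$ gives $\|u_-\|_{H^1}^2 + (\text{nonnegative Coulomb contribution})=0$, so $u_-\equiv 0$ and $u\geq 0$; then $u\not\equiv 0$ together with the equation $-\Delta u + u + \rho\phi_u u = u^p$ and the strong maximum principle give $u>0$, and $\phi_u>0$ follows from its integral representation. I expect the main obstacle to be twofold: first, establishing the uniform $H^1$ bound on $u_\mu$ via the Pohozaev--Nehari combination under the sharp constraint on $k$ (the Coulomb term scales badly for $p<3$, which is precisely why the naive approach fails and the monotonicity trick is needed); and second, the bookkeeping in the compactness step — making sure the energy-level comparisons $c_\mu$ vs $c_\mu^\infty$ are robust under the perturbation and that the lower bound $I_\mu(v_0)\geq 0$ genuinely holds for \emph{all} critical points, not just minimisers, which again relies on the Pohozaev identity being available (hence the $W^{1,1}_{\mathrm{loc}}$ hypothesis on $\rho$).
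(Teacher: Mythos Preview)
Your overall architecture matches the paper's proof exactly: monotonicity trick $\Rightarrow$ bounded PS sequences for $I_\mu$ at a.e.\ $\mu$ $\Rightarrow$ splitting via Proposition~\ref{splitting theorem} $\Rightarrow$ rule out bubbles to get critical points $u_\mu$ $\Rightarrow$ Pohozaev--Nehari system gives a uniform $H^1$ bound on $(u_\mu)$ $\Rightarrow$ $(u_\mu)$ is a bounded PS sequence for $I_1$ at level $c$ $\Rightarrow$ repeat.  Two points, however, are not right as written.

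First, a minor one: your mountain-pass-geometry check along $t\mapsto tu$ fails for $p\in(2,3)$, because the Coulomb term scales like $t^4$ and dominates $t^{p+1}$ when $p<3$.  The paper (Lemma~\ref{claim1}) instead uses the dilation $v_t(x)=t^2u(tx)$ with $u$ compactly supported, under which the Coulomb term is $O(t^3)$ while the nonlinearity grows like $t^{2p-1}$; since $p>2$ this wins.

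Second, and more substantively, your treatment of hypothesis $(ii)$ has a gap.  From $\rho\leq\rho_\infty$ you only get the \emph{non-strict} inequality $c_\mu\leq c_\mu^\infty$ (via $\Gamma^\infty\subseteq\Gamma$); the pointwise inequality $I_\mu(w)<I_\mu^\infty(w)$ does not by itself yield a strict min-max comparison, because you do not know a priori that $c_\mu^\infty$ is achieved along some admissible path.  The paper closes this as follows: assume by contradiction $v_0\equiv 0$; then the energy splitting forces exactly one nontrivial bubble $v_1>0$ with $I_\mu^\infty(v_1)=c_\mu^\infty=c_\mu$.  Now one manufactures an optimal path through $v_1$ using Ruiz's rescaling $\bar v_t(x)=t^2v_1(tx)$: by the Nehari and Pohozaev identities for $v_1$, the map $t\mapsto I_\mu^\infty(\bar v_t)$ has its unique maximum at $t=1$, so a reparametrised piece of this curve lies in $\Gamma^\infty$ and realises $\max_t I_\mu^\infty=I_\mu^\infty(v_1)=c_\mu^\infty$.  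Since $v_1>0$ on the ball $B$ where $\rho<\rho_\infty$, evaluating $I_\mu$ along this same path gives a \emph{strict} drop, hence $c_\mu<c_\mu^\infty$, contradicting $c_\mu=c_\mu^\infty$.  Thus $v_0\not\equiv 0$, and then $I_\mu(v_0)>0$ (Lemma~\ref{lower bound energy nontriv sols}) together with $I_\mu^\infty(v_j)\geq c_\mu^\infty\geq c_\mu$ forces every $v_j\equiv 0$.  You should incorporate this contradiction-plus-path argument; without it the exclusion of bubbles under $(ii)$ is not justified.
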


The non-coercive case turns out to be more `regular' with respect to compactness issues when $p\geq 3$. In fact, we can show that the Palais-Smale condition holds at the mountain pass level $c$ and as a consequence we have the following\\
\begin{theorem}\label{noncoercive large p} [{\bf Non-coercive case: existence of mountain pass solution for $p\geq 3$}] Suppose $\rho\in C(\R^3)$ is nonnegative and $\rho(x)\rightarrow \rho_\infty>0$ as $|x|\rightarrow +\infty$. Let either of the
following conditions hold:\\

$(i)$ $c<c^{\infty}$ \\

\noindent or\\

$(ii)$ $\rho(x) \leq \rho_{\infty}$ for all $x\in \R^3$, with strict inequality, $\rho(x) < \rho_{\infty}$, on some ball $B\subset \R^3,$\\

\noindent where $c$ and (resp.) $c^\infty$ are minimax levels defined in (\ref{minmax level0}) and (resp.) (\ref{minmax level00}).
Then, for any $p \in [3,5)$ there exists a solution, $(u, \phi_{u})\in H^1(\R^3) \times D^{1,2}(\R^3)$, of \eqref{main SP system}, whose components are positive functions. In particular, $u$ is a mountain pass critical point of $I$ at level $c$.\\ 
\end{theorem}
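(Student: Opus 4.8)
The plan is to prove Theorem~\ref{noncoercive large p} via the standard Mountain Pass scheme combined with the global compactness analysis already in place. First I would verify that $I$ has the Mountain Pass geometry on $H^1(\R^3) = E(\R^3)$: since $p \in [3,5)$ is subcritical, $u \mapsto \int u_+^{p+1}$ is well-defined and the Coulomb term $\frac14\int \rho \phi_u u^2$ is nonnegative, so $I(u) \geq \frac12\|u\|_{H^1}^2 - C\|u\|_{H^1}^{p+1}$ gives a strict local minimum at $0$; and since for fixed $u \geq 0$, $u \not\equiv 0$, one has $I(tu) \leq \frac{t^2}{2}\|u\|^2 + \frac{t^4}{4}\int\rho\phi_u u^2 - \frac{t^{p+1}}{p+1}\int u^{p+1} \to -\infty$ as $t \to \infty$ (because $p+1 > 4$ when $p > 3$, and when $p = 3$ the quartic Coulomb term must be compared against the quartic nonlinearity — here one needs the coefficient balance, but for $u$ a large multiple of a fixed profile the analysis still yields a negative value provided one first fixes a suitable profile, cf.\ \cite{Ambrosetti and Ruiz}), we obtain a nontrivial mountain pass level $c > 0$ given by \eqref{minmax level0}.

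Next I would produce a bounded Palais--Smale sequence at level $c$. For $p \geq 3$ this is exactly the range where, as the introduction notes, the geometry is more favourable: a Palais--Smale sequence $(u_n)$ with $I(u_n) \to c$, $I'(u_n) \to 0$ satisfies, for $p > 3$, $c + o(1)(1 + \|u_n\|) = I(u_n) - \frac{1}{p+1}\langle I'(u_n), u_n\rangle = (\frac12 - \frac{1}{p+1})\|u_n\|^2 + (\frac14 - \frac{1}{p+1})\int\rho\phi_u u^2$, and both coefficients are positive, yielding boundedness directly; for $p = 3$ one uses instead the combination with Pohozaev's identity (as in Ambrosetti--Ruiz) to get boundedness. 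So no monotonicity trick is needed here, and one extracts a bounded PS sequence at $c$ directly from the Mountain Pass Theorem without a deformation along $\mu$.

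Then I would apply Proposition~\ref{splitting theorem} with $\mu = 1$ to this bounded PS sequence: it decomposes as $u_n = v_0 + \sum_{j=1}^l v_j(\cdot - y_n^j) + o(1)$ in $H^1$, with $v_0$ a nonnegative solution of \eqref{SP one equation}, each $v_j$ a nonnegative solution of the limiting equation \eqref{SP infinity}, $|y_n^j| \to \infty$, and $c = I(u_n) + o(1) = I(v_0) + \sum_{j=1}^l I^\infty(v_j)$. The task is to rule out $l \geq 1$, i.e.\ to show no mass escapes to infinity. Here one uses the hypotheses: under $(i)$, $c < c^\infty$ where $c^\infty$ is the least energy level of $I^\infty$; since any nontrivial solution $v_j$ of \eqref{SP infinity} satisfies $I^\infty(v_j) \geq c^\infty$ (this requires knowing the mountain pass level for $I^\infty$ coincides with, or bounds, the least positive critical level — for $p \geq 3$ this follows from the Nehari characterization, cf.\ \cite{Ambrosetti and Ruiz}), and $I(v_0) \geq 0$ (indeed $I(v_0) \geq 0$ for any critical point, by the same coefficient-positivity identity used above, possibly $= 0$ only if $v_0 \equiv 0$), we get $c \geq l\, c^\infty \geq c^\infty$ if $l \geq 1$, contradicting $c < c^\infty$. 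Under $(ii)$, one must first establish $c < c^\infty$ from the pointwise comparison $\rho \leq \rho_\infty$ with strict inequality on a ball: taking a least-energy solution $w$ of the limiting problem (with its $\phi$), one estimates $I(tw) < I^\infty(tw)$ for the relevant $t$ because the Coulomb energy with weight $\rho$ is strictly smaller than with weight $\rho_\infty$ (the map $\rho \mapsto \int\int \frac{\rho(x)w^2(x)\rho(y)w^2(y)}{|x-y|}$ is strictly monotone under the stated strict inequality on $B$), while $\sup_t I(tw) \leq \sup_t I^\infty(tw) = c^\infty$, giving $c \leq \sup_t I(tw) < c^\infty$; then one reduces to case $(i)$.

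The main obstacle I anticipate is the $p = 3$ borderline: there the standard ``$I - \frac{1}{p+1}\langle I', \cdot\rangle$'' trick gives a zero coefficient on the Coulomb term, so boundedness of PS sequences and the strict descent $I(tw) \to -\infty$ both require the finer Pohozaev-type argument and careful bookkeeping of the quartic terms, exactly as in \cite{Ambrosetti and Ruiz}; and the identification $I^\infty(v_j) \geq c^\infty$ for \emph{every} nontrivial solution (not just mountain-pass ones) is delicate because the full structure of the solution set of \eqref{SP infinity} is not known — one gets around this by noting that $c^\infty$, being the mountain pass / least energy level, is a lower bound for $I^\infty$ on all nontrivial critical points when $p \geq 3$ via the Nehari manifold, which is why the restriction $p \geq 3$ is essential and the argument genuinely differs from the $p \in (2,3)$ case handled in Theorem~\ref{general theorem}. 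Once $l = 0$ is forced, $u_n \to v_0$ strongly in $H^1$, so $v_0$ is a nontrivial critical point of $I$ at level $c$; positivity of $u = v_0$ follows since $u_- \equiv 0$ forces $-\Delta u + u + \rho\phi_u u = u^p \geq 0$, and the strong maximum principle then gives $u > 0$ and correspondingly $\phi_u > 0$.
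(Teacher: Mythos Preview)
Your overall strategy coincides with the paper's: verify the mountain-pass geometry (Lemma~\ref{claim1}), obtain a bounded Palais--Smale sequence at level $c$, apply Proposition~\ref{splitting theorem} with $\mu=1$, and use the energy lower bounds (Lemmas~\ref{lower bound energy nontriv sols} and~\ref{lower bound nontriv sols infinity}) to exclude bubbles. This is exactly the content of Proposition~\ref{PS cond} and the two-line proof that follows.

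Two points deserve correction. First, your treatment of $p=3$ is unnecessarily complicated and potentially problematic. For boundedness you claim one must invoke Pohozaev's identity; but Pohozaev for nonconstant $\rho$ involves $(x,\nabla\rho)$ and requires $\rho\in W^{1,1}_{\textrm{loc}}$, hypotheses not assumed here. In fact your own estimate already suffices: at $p=3$ the coefficient $\frac14-\frac{1}{p+1}$ on the Coulomb term is zero, and since that term is nonnegative you still get $\bigl(\tfrac12-\tfrac14\bigr)\|u_n\|_{H^1}^2\le c+o(1)(1+\|u_n\|)$. The paper uses the equivalent combination $(p+1)I(u_n)-I'(u_n)u_n\ge \|u_n\|_{H^1}^2$, valid for all $p\ge 3$. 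Likewise, for the geometry at $p=3$ the paper avoids the quartic balance issue entirely by using the rescaling $v_t(x)=t^2u(tx)$ with compactly supported $u$ (Lemma~\ref{claim1}), which works uniformly for $p>2$. Finally, the bound $I^\infty(v_j)\ge c^\infty$ for any nontrivial nonnegative solution holds for \emph{all} $p\in(2,5)$ by Lemma~\ref{lower bound nontriv sols infinity} (via the $t^2v(t\cdot)$ path, not Nehari), so this is not where the restriction $p\ge3$ enters.

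Second, under hypothesis $(ii)$ you take a different route from the paper: you reduce $(ii)$ to $(i)$ by choosing a least-energy solution $w$ of the limiting problem and comparing $I$ to $I^\infty$ along $t\mapsto tw$. This is valid (such a $w$ exists for the autonomous problem with $\rho_\infty$), but it presupposes that external existence result. The paper instead argues entirely within the splitting: assuming $v_0\equiv0$ forces some bubble $v_1$ to satisfy $I^\infty(v_1)=c^\infty=c$, and then the path $t\mapsto t^2 v_1(t\,\cdot)$ through \emph{that} bubble, combined with $\rho<\rho_\infty$ on $B$ and $v_1>0$, yields $c<c^\infty$, a contradiction. The paper's version is self-contained; yours is slightly shorter once the limiting ground state is granted.
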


We follow up the previous two theorems with a result giving the existence of least energy solutions in the non-coercive case. When $p\in(3,5)$ the existence follows relatively straightforwardly using the Nehari characterisation of the mountain pass level, and when $p\in(2,3]$ we use a minimising sequence together with Proposition \ref{splitting theorem} to obtain the result. \\

 \begin{corollary}\label{theorem existence groundstate noncoercive}[{\bf Non-coercive case: existence of least energy solution for $p\in (2,5)$}]
Suppose $\rho \in C(\R^3)$ is nonnegative, $\rho(x) \to \rho_{\infty}>0$ as $|x| \to +\infty$, and one of the following conditions hold:\\

$(i)$ $c<c^{\infty}$ \\

\noindent or\\

$(ii)$ $\rho(x) \leq \rho_{\infty}$ for all $x\in \R^3$, with strict inequality, $\rho(x) < \rho_{\infty}$, on some ball $B\subset \R^3,$\\

\noindent where $c$ and (resp.) $c^\infty$ are minimax levels defined in (\ref{minmax level0}) and (resp.) (\ref{minmax level00}). If $p\in(2,3)$, suppose in addition that $\rho \in W^{1,1}_{\textrm{loc}
}(\R^3)$ and $k\rho(x)\leq (x, \nabla \rho)$ for some $k>\frac{-2(p-2)}{(p-1)}$. Then, for all $p \in (2,5)$, there exists a solution, $(u, \phi_{u})\in H^1(\R^3) \times D^{1,2}(\R^3)$, of \eqref{main SP system}, whose components are positive functions, such that $u$ is a least energy critical point of $I$. \\
\end{corollary}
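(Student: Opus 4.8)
The plan is to split the argument according to the range of $p$, mirroring the dichotomy announced before the statement. For $p\in(3,5)$ the mountain pass level $c$ coincides with the infimum of $I$ over the Nehari manifold $\mathcal N=\{u\in H^1(\R^3)\setminus\{0\}: I'(u)u=0\}$, because the nonlinearity is $p+1$-superquadratic with $p+1>4$ and the Coulomb term is $4$-homogeneous, so the standard fibering-map analysis applies: every ray $t\mapsto tu$ crosses $\mathcal N$ exactly once, at the maximum of $I$ along the ray. Hence the solution $u$ produced by Theorem \ref{noncoercive large p} already satisfies $I(u)=c=\inf_{\mathcal N} I$, and since any nontrivial solution lies on $\mathcal N$, $u$ is automatically a least energy solution. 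One then passes to $|u|$ (using $I(v)\ge I(|v|)$) and invokes the strong maximum principle as in Theorem \ref{theorem existence coercive0} to get positivity of both components.

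For $p\in(2,3]$ the Nehari identification is no longer available in the simple form above, so I would instead work directly with the least-energy value $m\coloneqq\inf\{I(u): u\not\equiv0,\ I'(u)=0\}$. First I would check $m$ is attained in a workable sense: by Theorem \ref{general theorem} (for $p\in(2,3)$) or Theorem \ref{noncoercive large p} (for $p=3$) the set of nontrivial solutions is nonempty, so $m<+\infty$; using the Pohozaev and Nehari identities one shows $I(u)$ is bounded below by a positive constant on this set (a standard computation combining the two identities eliminates the Coulomb term with a favourable sign and leaves a positive multiple of $\|u\|_{H^1}^2$), so $m>0$ and any minimising sequence of solutions is bounded in $H^1(\R^3)$. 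Each such $u_n$ is in particular a Palais–Smale sequence for $I=I_1$ at level $m$ (it is an exact critical point), so Proposition \ref{splitting theorem} applies: we get the decomposition $u_n=v_0+\sum_{j=1}^l v_j(\cdot-y_n^j)+o(1)$ with $I(v_0)+\sum_j I^\infty(v_j)\to m$, where $v_0$ solves \eqref{SP one equation perturbed} and the $v_j$ solve the problem at infinity.

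The main obstacle is ruling out vanishing and dichotomy in this decomposition, i.e.\ showing $l=0$ and $v_0\not\equiv0$. Here I would use the energy accounting together with the assumed strict inequality: under hypothesis $(i)$, $c<c^\infty$ bounds the available energy below the cost of splitting off a bubble at infinity, and under hypothesis $(ii)$, $\rho\le\rho_\infty$ with strict inequality on a ball forces $I(w)<I^\infty(w)$ for any fixed nontrivial $w$, so that a solution of the problem at infinity, if translated back, would not be energetically optimal — more precisely, if $v_0\equiv0$ and some $v_j\not\equiv 0$, then $m=\sum_j I^\infty(v_j)\ge I^\infty(v_{j_0})$ for some $j_0$, while the least energy value of $I$ is strictly below that of $I^\infty$ by a cut-and-paste argument (place a suitably rescaled $v_{j_0}$ near the ball $B$ where $\rho<\rho_\infty$ and project onto a solution), contradicting minimality of $m$. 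Thus $v_0$ is a nontrivial solution of \eqref{SP one equation perturbed} with $I(v_0)\le m$, hence $I(v_0)=m$ and $l=0$; then convergence $u_n\to v_0$ in $H^1$ is part (iv)–(v) of Proposition \ref{splitting theorem}, and $v_0$ realises the least energy.

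Finally, for $p\in(3,5)$ I should double-check that the two descriptions agree — that the Nehari infimum $\inf_{\mathcal N} I$ equals $m=\inf\{I(u):u\not\equiv0,\,I'(u)=0\}$ — which is immediate since every nontrivial solution is on $\mathcal N$, so $m\ge\inf_{\mathcal N}I$, while the mountain pass solution furnishes the reverse. To conclude in all cases: replace the minimiser $u$ by $|u|$ to get nonnegativity (the Coulomb and $u_+^{p+1}$ terms are invariant, the Dirichlet term does not increase), note $|u|$ is still a solution at the least energy level, solve the Poisson equation for $\phi_{|u|}\in D^{1,2}(\R^3)$, and apply the strong maximum principle to both equations to upgrade nonnegativity to strict positivity of $u$ and $\phi_u$, exactly as in the proof of Theorem \ref{theorem existence coercive0}. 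The decay estimates of Proposition \ref{charge decay} are not needed here but confirm the solution lies in the expected functional class. The one genuinely delicate point, beyond the bubble-exclusion above, is the lower bound $m>0$ for $p$ close to $2$, where the Pohozaev–Nehari combination must be carried out carefully to ensure the coefficient of $\|u\|^2_{H^1}$ stays strictly positive in the full range $(2,3]$; this is where the hypothesis $k>\tfrac{-2(p-2)}{p-1}$ (inherited from Theorem \ref{general theorem}) is implicitly used to guarantee the relevant identities are available and the geometry is intact.
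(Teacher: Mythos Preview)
Your overall strategy matches the paper's: Nehari characterisation for $p\in(3,5)$, and for $p\in(2,3]$ minimise $I$ over the set of nontrivial solutions, bound the minimising sequence via the Pohozaev--Nehari system (this is exactly Lemma~\ref{lower bound energy nontriv sols} and the system \eqref{mainSOE}), then apply Proposition~\ref{splitting theorem} and exclude bubbles. The paper does the same, calling the level $c^*$.

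There is, however, a genuine gap in your bubble-exclusion under hypothesis $(ii)$. You write that if $v_0\equiv 0$ one can ``place a suitably rescaled $v_{j_0}$ near the ball $B$ \ldots\ and project onto a solution'' to produce a solution of energy below $I^\infty(v_{j_0})$. There is no projection onto the solution set, and a translated or cut-and-pasted $v_{j_0}$ is not a critical point of $I$, so this does not bound $m$. The paper closes the argument differently: the mountain-pass critical point from Theorem~\ref{general theorem} (or Theorem~\ref{noncoercive large p} for $p=3$) lies in the solution set at level $c$, so $m=c^*\le c$. Then, assuming $v_0\equiv 0$, the splitting forces a single bubble $v_1$ with $I^\infty(v_1)=c^\infty=c^*\le c$, and one runs the Ruiz path $\bar v_t(x)=t^2 v_1(tx)$: by Lemma~3.3 in \cite{RuizJFA} this path has its unique maximum for $I^\infty$ at $t=1$, and since $v_1>0$ on $B$ where $\rho<\rho_\infty$, one gets $c\le \max_t I(\bar v_t)<\max_t I^\infty(\bar v_t)=I^\infty(v_1)=c^\infty\le c$, a contradiction. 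The same inequality $m\le c$ is what makes your argument under $(i)$ work (you need $m<c^\infty$, not just $c<c^\infty$), so you should state it explicitly.

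A minor point: the replacement $u\mapsto |u|$ at the end is unnecessary and the claim that ``$|u|$ is still a solution'' is not justified. In fact every critical point of $I$ is already nonnegative, since testing $I'(u)=0$ against $u_-$ gives $\|u_-\|_{H^1}^2+\int\rho\phi_u u_-^2=0$ (the $u_+^{p}$ term vanishes on $\{u<0\}$). Positivity then follows from the strong maximum principle as you say.
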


\begin{remark} 
By following similar techniques to those used in the proof of Corollary \ref{theorem existence groundstate noncoercive}, we can show that under the same assumptions as this corollary (with obvious modifications to the minimax levels), there exists a least energy solution for $\mathcal I$, defined in \eqref{def of F}, for all $p\in(2,5)$. As in the coercive case, it is not clear if we can select a positive groundstate for $p\in (2,3]$.\\ 
\end{remark}

After establishing these existence results, it is natural to ask if the non-locality of the equation allows us to find localised solutions. Moreover, we are interested in removing any compactness condition. For these reasons we focus on the equation

\begin{equation}\label{SPwithlambda}
\left\{
\begin{array}{lll}
  -\epsilon^2 \Delta u+ \lambda u + \rho (x) \phi  u = |u|^{p-1} u,  \quad &x\in \R^3 \\
  \,\,\, -\Delta \phi=\rho(x) u^2, &x\in \R^3,
\end{array}
\right.
\end{equation}\\
\noindent with $\rho:\R^3 \to \R$ a nonnegative measurable function, $\lambda\in \R$, and $\lambda > 0$, taking advantage of a shrinking parameter $\epsilon \sim \hbar\ll 1$ which behaves like the Planck constant in the so-called `semiclassical limit'. In this direction, Ianni and Vaira \cite{Ianni and Vaira} notably showed that concentration of semiclassical solutions to 

\begin{equation*}
\left\{
\begin{array}{lll}
  -\epsilon^2 \Delta u+V(x) u + \rho (x) \phi u = |u|^{p-1} u,   \quad &1< p < 5, \quad x\in \R^3 \\
  \,\,\, -\Delta \phi=\rho(x) u^2, &x\in \R^3,
\end{array}
\right.
\end{equation*} \\
occurs at stationary points of the external potential $V$ using a Lyapunov-Schmidt approach (in the spirit of the Ambrosetti-Malchiodi monograph \cite{Ambrosetti and Malchiodi} on perturbation methods), whereas in \cite{Bonheure Di Cosmo Mercuri} concentration results have been obtained using a variational/penalisation approach in the spirit of del Pino and Felmer \cite{del Pino and Felmer}. In particular, in \cite{Bonheure Di Cosmo Mercuri} the question of studying concentration phenomena which are purely driven by $\rho$ has been raised. None of the aforementioned contributions have dealt with necessary conditions for concentration at points in the case $V\equiv \textrm{constant}$ and in the presence of a variable charge density function $\rho$. We manage to fill this gap, by obtaining a necessary condition, related to $\rho$, for the concentration at points for solutions to \eqref{SPwithlambda} both in $H^1(\R^3)$ and in $E(\R^3),$ which are the suitable settings for the study of concentration phenomena with perturbative and variational techniques, respectively.\\

\begin{theorem} \label{necessary conditions E} [{\bf Necessary conditions in $E$}]
Suppose that $\rho\in C^1(\R^3)$ is nonnegative and $|\nabla \rho(x)|=O(|x|^ae^{b|x|})$ as $|x|\to+\infty$ for some $a>0$ and some $b\in\R$. Let $p\in [2,5)$ and let $(u_{\epsilon}, \phi_{u_{\epsilon}})\in E(\R^3) \times D^{1,2}(\R^3)$ be a sequence of positive solutions of \eqref{SPwithlambda}. Assume that $u_{\epsilon}$ concentrate at a point $x_0$ for sufficiently small $\epsilon$, meaning that $\forall \delta >0$, $\exists \epsilon_0 >0$, $\exists R>0$ such that $u_{\epsilon}(x) \leq \delta$ for $|x-x_0|\geq \epsilon R$, $\epsilon<\epsilon_0$.  Then, $\nabla \rho (x_0) =0$. \\
\end{theorem}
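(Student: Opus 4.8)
The plan is to derive a Pohozaev-type identity for the rescaled solutions and extract from it, in the concentration limit, the condition $\nabla\rho(x_0)=0$. First I would introduce the rescaling $v_\epsilon(y) \coloneqq u_\epsilon(x_0 + \epsilon y)$ and $\psi_\epsilon(y) \coloneqq \phi_{u_\epsilon}(x_0+\epsilon y)$, so that $(v_\epsilon,\psi_\epsilon)$ solves
\[
-\Delta v_\epsilon + \lambda v_\epsilon + \rho(x_0+\epsilon y)\psi_\epsilon v_\epsilon = |v_\epsilon|^{p-1}v_\epsilon, \qquad -\Delta\psi_\epsilon = \epsilon^2\rho(x_0+\epsilon y)v_\epsilon^2,
\]
on $\R^3$. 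The concentration hypothesis says precisely that $v_\epsilon$ stays uniformly bounded and concentrated near the origin, so that after passing to a subsequence $v_\epsilon$ converges (in a suitable sense, e.g.\ $H^1$ and locally uniformly, using elliptic estimates and the decay from Proposition \ref{charge decay}) to a nontrivial nonnegative $v$ solving the limiting equation $-\Delta v + \lambda v + \rho(x_0)\bar\phi_v v = v^p$, with $\bar\phi_v$ the Newtonian potential of $\rho(x_0)v^2$ scaled appropriately; the Coulomb term survives or vanishes depending on the power counting, but in either case the limit profile $v$ exists and decays exponentially.

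Next I would test the equation for $v_\epsilon$ against $\partial_{y_k} v_\epsilon$ (equivalently, integrate against $\nabla v_\epsilon$ and use the divergence theorem), which is the standard device producing the ``first variation in the translation direction.'' The terms $-\Delta v_\epsilon$ and $\lambda v_\epsilon$ and $|v_\epsilon|^{p-1}v_\epsilon$ all integrate to exact divergences and contribute nothing after integrating over $\R^3$ (boundary terms vanish by exponential decay). The only term that does not produce a pure divergence is the nonlocal one: $\int \rho(x_0+\epsilon y)\psi_\epsilon v_\epsilon\,\partial_{y_k}v_\epsilon$. Writing $v_\epsilon\partial_{y_k}v_\epsilon = \tfrac12\partial_{y_k}(v_\epsilon^2)$ and integrating by parts moves the derivative onto $\rho(x_0+\epsilon y)\psi_\epsilon$, yielding (up to the divergence of a flux that vanishes)
\[
-\tfrac12\int_{\R^3}\bigl(\epsilon\,\partial_k\rho(x_0+\epsilon y)\,\psi_\epsilon + \rho(x_0+\epsilon y)\,\partial_{y_k}\psi_\epsilon\bigr)v_\epsilon^2\,\dif y = 0.
\]
Then I would symmetrize the second piece using the Poisson equation for $\psi_\epsilon$: the quantity $\int \rho(x_0+\epsilon y)v_\epsilon^2\,\partial_{y_k}\psi_\epsilon$ is, by the representation $\psi_\epsilon$ as a convolution, equal (after relabeling $y\leftrightarrow y'$ and using the antisymmetry of $\partial_{y_k}\tfrac{1}{|y-y'|}$) to an expression that also carries a factor $\epsilon\,\partial_k\rho$, i.e.\ it is a ``doubly weighted'' nonlocal term in which the genuine $\rho$–$\rho$ part cancels and only the gradient of $\rho$ remains. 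Concretely, one gets that the whole translation identity reduces to
\[
\epsilon\int_{\R^3}\partial_k\rho(x_0+\epsilon y)\,\psi_\epsilon v_\epsilon^2\,\dif y + (\text{a second, similar }\partial_k\rho\text{ term}) = 0.
\]

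Finally I would divide by the appropriate power of $\epsilon$ and pass to the limit. Expanding $\partial_k\rho(x_0+\epsilon y) = \partial_k\rho(x_0) + O(\epsilon|y|)$ and using that $v_\epsilon\to v$ with exponential decay uniform in $\epsilon$ (so that weighted integrals like $\int |y|\psi_\epsilon v_\epsilon^2$ stay bounded — here the growth hypothesis $|\nabla\rho(x)|=O(|x|^a e^{b|x|})$ on $\rho$ is exactly what guarantees $\partial_k\rho(x_0+\epsilon y)$ does not blow up faster than the decay of $v_\epsilon^2$ can absorb, since after rescaling $\epsilon y = x - x_0$ and the exponential concentration $v_\epsilon(x)\lesssim e^{-c|x-x_0|/\epsilon}$ beats $e^{b|x-x_0|}$ for $\epsilon$ small), the leading term of the identity becomes
\[
\partial_k\rho(x_0)\int_{\R^3}\bar\phi_v\, v^2\,\dif y + (\cdots) = 0,
\]
where the remaining bracket is a positive multiple of the same integral; since $v$ is nontrivial and $\rho(x_0)\geq 0$, the integral $\int \bar\phi_v v^2$ is strictly positive unless $\rho(x_0)=0$. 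If $\rho(x_0)>0$ we conclude $\partial_k\rho(x_0)=0$ for each $k$, hence $\nabla\rho(x_0)=0$; and if $\rho(x_0)=0$, then $x_0$ is a minimum of the nonnegative function $\rho$, so again $\nabla\rho(x_0)=0$. The main obstacle I anticipate is making the limiting argument fully rigorous: controlling the nonlocal term uniformly in $\epsilon$ (in particular justifying the convergence $\psi_\epsilon v_\epsilon^2 \to \bar\phi_v v^2$ in a weighted sense, and handling the delicate power-of-$\epsilon$ bookkeeping in the Coulomb term, which behaves differently according to whether $p$ forces the Coulomb contribution to the limit energy to survive), together with checking that the boundary/flux terms in every integration by parts genuinely vanish — this is where the decay estimates of Proposition \ref{charge decay} and the hypothesis on $|\nabla\rho|$ do the essential work.
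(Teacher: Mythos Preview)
Your overall strategy---rescale around $x_0$, derive a translation identity by testing the equation against $\partial_{y_k}v_\epsilon$, symmetrize the nonlocal piece via the Poisson equation, then pass to the limit using dominated convergence and the growth bound on $\nabla\rho$---is exactly the paper's approach. The paper carries out the integration by parts on balls $B_R$ and sends $R\to\infty$ to obtain, for each $k$, the clean identity
\[
\int_{\R^3}\int_{\R^3}\frac{w_k^2(y)\,\rho(x_0+\epsilon_k y)\,w_k^2(x)\,\nabla\rho(x_0+\epsilon_k x)}{4\pi|x-y|}\,\dif y\,\dif x = 0,
\]
and then lets $k\to\infty$ to get $\rho(x_0)\nabla\rho(x_0)=0$, concluding as you do.

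There are, however, two genuine gaps in your sketch. First, the concentration hypothesis does \emph{not} yield a uniform $L^\infty$ bound on $v_\epsilon$: it controls $v_\epsilon$ only outside $B_R$, saying nothing about the maximum. The paper supplies this by a separate blow-up step: if $\|u_{\epsilon_m}\|_\infty\to\infty$, rescale by the maximum to produce a nontrivial bounded nonnegative solution of $-\Delta v=v^p$ on $\R^3$, contradicting Gidas--Spruck. Without this bound the elliptic bootstrap to $C^2_{\mathrm{loc}}$ convergence does not get off the ground. Second, the uniform-in-$\epsilon$ exponential decay you invoke is \emph{not} a consequence of Proposition~\ref{charge decay}, whose constants depend on the individual solution. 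The paper instead proves directly, via comparison with $C|x|^{-1}e^{-\sqrt\lambda|x|/2}$ on $\{|x|\ge R_0\}$, that $w_k(x)\le C|x|^{-1}e^{-\sqrt\lambda|x|/2}$ with $C,R_0$ independent of $k$; this uses that the concentration assumption forces $w_k^{p-1}\le\lambda/2$ uniformly for $|x|\ge R_0$. That uniform decay is precisely what lets you dominate the double integrand by an $L^1(\R^6)$ function independent of $k$---and here the restriction $\epsilon_k<\sqrt\lambda/b$ (when $b>0$) is needed so that the exponential decay of $w_k^2$ beats the growth $e^{b\epsilon_k|x|}$ of $\nabla\rho(x_0+\epsilon_k x)$.

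One minor correction: since $\phi_{u_\epsilon}(x_0+\epsilon y)=\epsilon^2\int\frac{w_\epsilon^2(z)\rho(x_0+\epsilon z)}{4\pi|y-z|}\,\dif z$, the Coulomb term carries an extra $\epsilon^2$ and vanishes in the limit; the profile $w_0$ solves $-\Delta w_0+\lambda w_0=w_0^p$ with no nonlocal part. This does not affect your conclusion, as all that is used is $w_0>0$.
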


The above result is obtained in the spirit of \cite{Wang} using classical blow-up analysis, uniform decay estimates, and Pohozaev type identities.\\
\begin{remark} Since we deal with concentrating solutions, we use the mean value theorem to control the growth of $\rho$ with the assumption on $\nabla \rho$ in order to apply the dominated convergence theorem in the proof of the theorem (see Claim 5). We note that this assumption is not needed in Theorem \ref{necessary conditions H} as we work with a bounded $\rho$ and therefore the application of the dominated convergence theorem is more immediate. \end{remark}

\begin{remark} When $b>0$ the proof of Theorem \ref{necessary conditions E} Claim 5 is sensitive to $\epsilon$ being smaller than the ratio $\frac{\sqrt{\lambda}}{b}$. This ratio arises as the proof consists of balancing the aforementioned growth of $\rho$ and $\nabla \rho$ with the a priori exponential decay of the concentrating solutions in order to apply the dominated convergence theorem.\\ \end{remark}

\begin{theorem} \label{necessary conditions H}[{\bf Necessary conditions in $H^1$}]
Suppose that $\rho\in C^1(\R^3)$ is nonnegative and that $\rho, \nabla \rho$ are bounded. Let $p\in [2,5)$ and let $(u_{\epsilon}, \phi_{u_{\epsilon}})\in H^1(\R^3) \times D^{1,2}(\R^3)$ be a sequence of positive solutions of \eqref{SPwithlambda}. Assume that $u_{\epsilon}$ concentrate at a point $x_0$, meaning that $\forall \delta >0$, $\exists \epsilon_0 >0$, $\exists R>0$ such that $u_{\epsilon}(x) \leq \delta$ for $|x-x_0|\geq \epsilon R$, $\epsilon<\epsilon_0$.  Then, $\nabla \rho (x_0) =0$.\\
\end{theorem}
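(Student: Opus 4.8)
The plan is to read off $\nabla\rho(x_0)$ from an exact ``Pohozaev-type'' (translation) identity for the coupled system, after rescaling around the concentration point. First I would collect the standard regularity and decay facts: a positive solution $u_\epsilon\in H^1(\R^3)$ of \eqref{SPwithlambda} is smooth by elliptic bootstrap (recall $\phi_{u_\epsilon}\in D^{1,2}(\R^3)$, $\rho,\nabla\rho$ bounded), it decays exponentially together with its gradient (as in Proposition \ref{charge decay}), and $\phi_{u_\epsilon}(x)=O(|x|^{-1})$, $\nabla\phi_{u_\epsilon}(x)=O(|x|^{-2})$ at infinity. Multiplying the first equation of \eqref{SPwithlambda} by $\partial_{x_i}u_\epsilon$ and integrating over $\R^3$, the Laplacian, the linear, and the power terms are exact derivatives and integrate to zero; writing $\rho\phi_{u_\epsilon}u_\epsilon\,\partial_{x_i}u_\epsilon=\tfrac12\rho\phi_{u_\epsilon}\,\partial_{x_i}(u_\epsilon^2)$, integrating by parts, and then using $-\Delta\phi_{u_\epsilon}=\rho u_\epsilon^2$ together with $\int_{\R^3}\nabla\phi_{u_\epsilon}\cdot\nabla(\partial_{x_i}\phi_{u_\epsilon})=\tfrac12\int_{\R^3}\partial_{x_i}|\nabla\phi_{u_\epsilon}|^2=0$ (all boundary terms vanishing by the decay above), I obtain the exact identity
\[
\int_{\R^3}\partial_{x_i}\rho(x)\,\phi_{u_\epsilon}(x)\,u_\epsilon^2(x)\dif x=0,\qquad i=1,2,3.
\]

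Next I would set up the blow-up. Let $x_\epsilon$ be a maximum point of $u_\epsilon$; evaluating the equation there and using $-\epsilon^2\Delta u_\epsilon(x_\epsilon)\ge0$ and $\rho,\phi_{u_\epsilon}\ge0$ gives $\norm{u_\epsilon}_{L^\infty}^{p-1}\ge\lambda$, hence $\norm{u_\epsilon}_{L^\infty}\ge\lambda^{1/(p-1)}>0$; taking $\delta<\lambda^{1/(p-1)}$ in the concentration hypothesis forces $x_\epsilon\to x_0$. Put $v_\epsilon(y):=u_\epsilon(x_\epsilon+\epsilon y)$ and $\psi_\epsilon(y):=\epsilon^{-2}\phi_{u_\epsilon}(x_\epsilon+\epsilon y)$; a change of variables shows $-\Delta\psi_\epsilon=\rho(x_\epsilon+\epsilon\,\cdot)\,v_\epsilon^2$ and
\[
-\Delta v_\epsilon+\lambda v_\epsilon+\epsilon^2\,\rho(x_\epsilon+\epsilon y)\,\psi_\epsilon(y)\,v_\epsilon=v_\epsilon^{\,p}\quad\text{in }\R^3 .
\]
A subcritical Gidas--Spruck/Liouville argument (here $p<5$) rules out $\norm{u_\epsilon}_{L^\infty}\to+\infty$, so $v_\epsilon$ is bounded in $L^\infty$; a comparison argument outside a large ball (where $v_\epsilon^{\,p-1}\le\lambda/2$) gives an $\epsilon$-independent bound $v_\epsilon(y)\le Ce^{-c|y|}$, and then, since $\rho$ is bounded, a Newtonian potential estimate makes $\psi_\epsilon$ uniformly bounded. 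Because the nonlocal term carries a factor $\epsilon^2$, elliptic estimates give $v_\epsilon\to v_0$ in $C^2_{\mathrm{loc}}\cap H^1(\R^3)$, with $v_0$ a positive solution of $-\Delta v_0+\lambda v_0=v_0^{\,p}$, nontrivial because $v_0(0)=\lim\norm{u_\epsilon}_{L^\infty}\ge\lambda^{1/(p-1)}$ (by \cite{Kwong} it is in fact the unique such solution centred at $0$), while $\psi_\epsilon\to\rho(x_0)\Phi_{v_0}$ with $-\Delta\Phi_{v_0}=v_0^2$ and $\Phi_{v_0}>0$.

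Finally I would pass to the limit in the identity. Changing variables $x=x_\epsilon+\epsilon y$ turns it into $\int_{\R^3}\partial_{x_i}\rho(x_\epsilon+\epsilon y)\,\psi_\epsilon(y)\,v_\epsilon^2(y)\dif y=0$; using $\nabla\rho\in C^0\cap L^\infty$, the uniform $L^\infty$ bound on $\psi_\epsilon$, and the $\epsilon$-uniform exponential decay of $v_\epsilon$ as a dominating function, the dominated convergence theorem yields
\[
\rho(x_0)\,\partial_{x_i}\rho(x_0)\int_{\R^3}\Phi_{v_0}\,v_0^2\dif y=0 .
\]
Since $\int_{\R^3}\Phi_{v_0}v_0^2=\int_{\R^3}|\nabla\Phi_{v_0}|^2>0$, for each $i$ either $\partial_{x_i}\rho(x_0)=0$ or $\rho(x_0)=0$; but if $\rho(x_0)=0$ then $x_0$ is a global minimum of the nonnegative $C^1$ function $\rho$, so again $\nabla\rho(x_0)=0$. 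Hence $\nabla\rho(x_0)=0$.

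The main obstacle is the blow-up step: one must establish, uniformly in $\epsilon$, the $L^\infty$ bound on $u_\epsilon$ (via the subcritical Liouville argument) and the $\epsilon$-independent exponential decay of the rescaled functions, so that the rescaled Coulomb potential $\psi_\epsilon$ is uniformly bounded and the passage to the limit in the Pohozaev-type identity can be justified by dominated convergence. This is precisely where boundedness of $\rho$ and $\nabla\rho$ enters, and is the reason the analogous Theorem \ref{necessary conditions E} requires the extra exponential growth control on $\nabla\rho$; the remaining ingredients are the routine computations above, carried out in the spirit of \cite{Wang}.
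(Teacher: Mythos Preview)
Your proposal is correct and follows essentially the same route as the paper: a Gidas--Spruck blow-up to get the uniform $L^\infty$ bound, rescaling with $C^2_{\mathrm{loc}}$ convergence to a positive solution of $-\Delta w_0+\lambda w_0=w_0^p$, uniform exponential decay via comparison, a translation Pohozaev identity yielding $\int \nabla\rho\,\phi_{u_\epsilon}u_\epsilon^2=0$, and dominated convergence to conclude $\rho(x_0)\nabla\rho(x_0)=0$, hence $\nabla\rho(x_0)=0$ by nonnegativity of $\rho$. The only cosmetic differences are that the paper centres the rescaling at $x_0$ rather than at the maximum point $x_\epsilon$, and derives the Pohozaev identity \emph{after} rescaling (working on balls $B_R$ and letting $R\to\infty$ along a suitable sequence to kill boundary terms) rather than for $u_\epsilon$ directly; the substance is identical.
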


\begin{remark}It is possible to relax the global boundedness assumption on $\rho$ and/or on $\nabla \rho$ when working in $H^1(\R^3)$, if we make a growth assumption on $\nabla \rho$. Namely, if we work in $H^1(\R^3)$ and have adequate local integrability on $\rho$ to ensure $\int_{\R^3}\int_{\R^3} \frac{u^2(x)\rho(x)u^2(y)\rho(y)}{|x-y|}\dif x\, \dif y <+\infty$, typically identified using the Hardy-Littlewood-Sobolev inequality, the statement of the theorem and the proof is identical to that of Theorem \ref{necessary conditions E}.\end{remark}

\begin{remark} In the proof of both Theorem \ref{necessary conditions E} and Theorem \ref{necessary conditions H}, one actually finds the condition $\rho(x_0)\nabla\rho(x_0)=0.$ We believe that this may be a necessary condition in the case $\rho$ is allowed to change sign on a small region. \end{remark}

\subsection{Related questions} In our opinion, there are a number of interesting questions related to our work which are worth studying in future projects.\\

\noindent {\bf A. Radial versus non-radial solutions.} In the case $\rho$ is a radial function one can restrict on functions having the same symmetry to find radial solutions, using Palais criticality principle, in all the above scenarios (coercive/non-coercive cases, for low/large $p$). It is not clear how to compare the energy levels nor the symmetry of the solutions with those that one finds using the above non-radial approaches.  \\

\noindent {\bf B. Variational characterisation.} 
As mentioned above, when $p\in(2,3]$, it is not obvious whether the mountain pass critical points for $I,$ are least energy solutions. Namely, for $p\in(2,3]$, there is no clear relation between the solutions found in Theorem \ref{theorem existence coercive0}  (for $p=3$) and Theorem \ref{theorem existence coercive} with those found in Corollary \ref{theorem existence groundstate coercive}, as well as between the solutions found in Theorem \ref{noncoercive large p} (for $p=3$) and Theorem \ref{general theorem} with those found in Corollary \ref{theorem existence groundstate noncoercive}.\\
 
\noindent {\bf C. Multiplicity.} For $p>3$, we believe that the existence of infinitely many solutions can be proved following Ambrosetti-Ruiz paper \cite{Ambrosetti and Ruiz}, for instance in the case of (possibly non-radial) coercive $\rho.$ We suspect that the constrained minimisation approach in \cite{Sun Wu Feng} may help refining the approach in \cite{Ambrosetti and Ruiz}, to obtain a multiplicity result for $p\leq 3.$\\

\noindent {\bf D. `Sharp' necessary conditions for point concentration.} 
Is it possible to allow a faster growth for $\rho$ in the necessary conditions for point concentration? The proof we provide is based on the uniform exponential decay of solutions, which is essentially due to the $L^2$ setting. \\

\noindent {\bf E. Sufficient conditions for point concentration.} 
Following a personal communication of Denis Bonheure \cite{Denis} setting\\ $$I_\epsilon(u)= \frac{1}{2}\int_{\R^3}(\epsilon^2|\nabla u|^2 +  u^2)+\frac{1}{4}\int_{\R^3}  \rho\phi_u u^2 -\frac{1}{p+1}\int_{\R^3}|u|^{p+1}$$
and $$I_0(u)= \frac{1}{2}\int_{\R^3}(|\nabla u|^2 +  u^2) -\frac{1}{p+1}\int_{\R^3}|u|^{p+1},$$\\
taking $u\in C^{1}_c(\R^3)$ and using the scaling $$u_\epsilon= u\Big(\frac{\cdot-x_0}{\epsilon}\Big),$$ one finds the expansion\\ $$\frac{\epsilon^{-3}I_\epsilon(u_\epsilon)-I_0(u)}{\epsilon^2}\approx \rho^2(x_0) \int_{\R^3}\int_{\R^3} \frac{ u^2 (y) u^2 (x)}{4\pi|x-y|}\dif y \, \dif x, \qquad \epsilon \ll 1.$$\\
Inspired by this formal calculation,  in a forthcoming paper, we will prove concentration at strict local minima of $\rho^2$.

\subsection{Outline} The paper is organised as follows. In Section \ref{regularity}, we recall some properties of the space $E(\R^3)$, prove regularity and positivity results for solutions to the nonlinear Schr\"{o}dinger-Poisson system, and establish a useful Pohozaev identity for these solutions, the proof of which can be found in the appendix. In Section \ref{mu levels}, we outline the min-max setting and define the levels $c_{\mu}$ and $c_{\mu}^{\infty}$, $c$, and $c^{\infty}$, which are relevant for both the coercive and non-coercive cases. We then find lower bounds on the functions $I_{\mu}$ and $I_{\mu}^{\infty}$, when restricted to the set of nontrivial solutions which are fundamental in relation to compactness properties of Palais-Smale sequences.  In Section \ref{coercive section}, we study the case of a coercive $\rho$ and establish that this coercivity is a sufficient condition for the compactness of the embedding $E(\R^3)\hookrightarrow L^{p+1}(\R^3)$. This enables us, using the min-max setting of Section \ref{mu levels}, to prove existence of mountain pass solutions in the coercive case for $p\in (2,3)$ (Theorem \ref{theorem existence coercive0}). We then use a minimisation argument to prove the existence of least energy solutions (Corollary \ref{theorem existence groundstate coercive}). In Section \ref{non-coercive section}, we focus on a non-coercive $\rho$ and we first establish a representation result for bounded Palais-Smale sequence for $I_{\mu}$ in Proposition \ref{splitting theorem}. Using the min-max setting of Section \ref{mu levels} and the lower bounds found in this section, we prove existence of mountain pass solutions for $p\in (2,3)$ (Theorem \ref{general theorem}). We then show that for $p\geq3$ the Palais-Smale condition holds for $I$ at the level $c$, following which the proof of Theorem \ref{noncoercive large p} easily follows. We finally prove the existence of least energy solutions in the non-coercive case for $p\in (2,5)$ (Corollary \ref{theorem existence groundstate noncoercive}). In Section \ref{Necessary conditions for concentration of semiclassical states}, we obtain necessary conditions for the concentration at points in both $E$ (Theorem \ref{necessary conditions E}) and $H^1$ (Theorem \ref{necessary conditions H}).

\subsection{Notation} 
We use the following notation throughout:
\begin{itemize}
\item $L^p(\Omega)$, with $\Omega \subseteq \R^3$ and $p \geq 1$, is the usual Lebesgue space. $L^p(\R^3)=L^p$.
\item The H\"{o}lder space $C^{k,\alpha}(\Omega)$, with $\Omega \subseteq \R^3$ and $\alpha \in (0,1]$, is the set of functions on $\Omega$ that are $k$-fold differentiable and whose $k$-fold derivatives are H\"{o}lder continuous of order $\alpha$.
\item $H^1,$ $W^{m,p}$ are classical Sobolev spaces.
\item $H^{-1}(\R^3)=H^{-1}$ denotes the dual space of $H^1(\R^3)$.
\item $D^{1,2}(\R^3)=D^{1,2}$ is the space defined as 
\[D^{1,2}(\R^3) \coloneqq \{ u\in L^6(\R^3) : \nabla u \in L^2(\R^3)\},\]
and equipped with norm
\[ ||u||_{D^{1,2}(\R^3)} \coloneqq ||\nabla u||_{L^2(\R^3)}.\]
\item $E(\R^3)=E$ is the space defined as\\
\[E(\R^3) \coloneqq \left\{u\in D^{1,2}(\R^3) \,: \, ||u||_{E} < +\infty\right\},\]
where
\[||u||_{E}^2 \coloneqq \int_{\R^3}(|\nabla u|^2 +  u^2)\dif x + \left(\int_{\R^3}\int_{\R^3}\frac{u^2(x)\rho (x) u^2(y) \rho (y)}{|x-y|}\dif x\dif y\right)^{1/2}.\]
\item We set
\[\phi_u (x) \coloneqq \int_{\R^3}\frac{\rho (y)u^2(y)}{4\pi |x-y|}\dif y,\]
and
\[\bar\phi_u (x) \coloneqq \int_{\R^3}\frac{\rho_{\infty}u^2(y)}{4\pi |x-y|}\dif y.\]
\item For any $\eta>0$ and any $z\in\R^3$, $B_{\eta}(z)$ is the ball of radius $\eta$ centered at $z$. For any $\eta>0$, $B_{\eta}$ is the ball of radius $\eta$ centered at $0$. 

\item $S_{p+1}\coloneqq \inf_{u\in H^1(\R^3)\setminus \{0\}} \frac{||u||^2_{H^1(\R^3)}}{||u||^2_{L^{p+1}(\R^3)}}$ is the best Sobolev constant for the embedding of $H^1(\R^3)$ into $L^{p+1}(\R^3)$.
\item Let $A\subset \R^3$. Then, we define $$\chi_{A}(x)\coloneqq \left\{
\begin{array}{lll}
  1, \qquad &x\in A,  \\
  0, & x\not\in A.
\end{array}
\right.
$$
\item $C,C_1, C'$, etc., denote generic positive constants.
\item Asymptotic Notation: For real valued functions $f(t),g(t)\geq0$, we write:
\begin{itemize}
\item $f(t)\lesssim g(t)$ if there exists $C>0$ independent of $t$ such that $f(t)\leq Cg(t)$. 
\item  $f(t)=o(g(t))$ as $t\to+\infty$ if and only if $g(t)\neq0$ and $\lim_{t\to+\infty} \frac{f(t)}{g(t)}=0$.
\item $f(t)=O(g(t))$ as $t\to+\infty$ if and only if there exists $C_1>0$ such that $f(t)\leq C_1 g(t)$ for $t$ large.
\end{itemize}

\end{itemize}

\section*{Acknowledgements} C.M. would like to thank Antonio Ambrosetti and David Ruiz for having drawn his attention to questions related to the Schr\"odinger-Poisson systems. The same author would like to thank Michel Willem for inspiring discussions on questions related to the Palais-Smale condition. Both authors would like to thank Norihisa Ikoma for his suggestion to consider a minimising sequence in order to prove the existence of least energy solutions. Last, but not least, both authors would like to thank an anonymous referee for the valuable and constructive comments.

\section{Preliminaries} \label{regularity}
\subsection{The space $E(\R^3)$} 
Let us assume that $\rho$ is continuous and nonnegative. It is easy to see that $E(\R^3)$ is a uniformly convex Banach space. As a consequence it is reflexive and, in particular, the unit ball is weakly compact. Reasoning as in Proposition 2.4 in \cite{RuizARMA} and Proposition 2.10 in \cite{Mercuri Moroz Van Schaftingen} a sequence $(u_n)_{n\in \mathbb N}\subset E(\R^3)$ is weakly convergent to $u\in E$ if and only if is bounded and converges in $L^1_{\textrm{loc}
}(\R^3).$ In particular, $\phi_{u_n}\rightharpoonup \phi_u$ in $D^{1,2}(\R^3).$ The following nonlocal Brezis-Lieb lemma is very useful to study the compactness of Palais-Smale sequences. \\

\begin{lemma}[\cite{Bellazzini Frank Visciglia}, \cite{Mercuri Moroz Van Schaftingen}] \label{nonlocalBL}[{\bf Nonlocal Brezis-Lieb lemma}]
Let $(u_n)_{n\in \mathbb N}\subset E(\R^3)$ be a bounded sequence such that $u_n \rightarrow u$ almost everywhere in $\R^3.$ Then it holds that 

$$\lim_{n\rightarrow \infty} \Big[\|\nabla\phi_{u_n}\|^2_{L^2(\R^3)}-\|\nabla \phi_{u_n-u}\|^2_{L^2(\R^3)}\Big]=\| \nabla \phi_{u} \|^2_{L^2(\R^3)}.$$
\end{lemma}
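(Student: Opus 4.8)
The plan is to reduce the nonlocal Brezis--Lieb statement to an application of the classical Brezis--Lieb lemma combined with the convolution structure of $\phi_u$. First I would recall the bilinear form
\[
D(f,g)\coloneqq\int_{\R^3}\int_{\R^3}\frac{f(x)g(y)}{\omega|x-y|}\dif x\dif y,
\]
so that by \eqref{double integral} we have $\|\nabla\phi_v\|_{L^2(\R^3)}^2=D(\rho v^2,\rho v^2)$ for every admissible $v\in E(\R^3)$, and $\langle-\Delta\phi_{v},\phi_{w}\rangle=D(\rho v^2,\rho w^2)$ by polarisation. Writing $v_n\coloneqq u_n-u$, the quantity to control is
\[
D(\rho u_n^2,\rho u_n^2)-D(\rho v_n^2,\rho v_n^2)=2D\big(\rho v_n^2,\rho(u_n^2-v_n^2)\big)+D\big(\rho(u_n^2-v_n^2),\rho(u_n^2-v_n^2)\big),
\]
after expanding $u_n^2=v_n^2+(u_n^2-v_n^2)$ inside the bilinear form; since $u_n^2-v_n^2=2u_nu-u^2$, one expects this to converge to $D(\rho u^2,\rho u^2)=\|\nabla\phi_u\|_{L^2(\R^3)}^2$.

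The key analytic input is the Hardy--Littlewood--Sobolev inequality \eqref{HLS intro}, which gives $|D(f,g)|\le C\|f\|_{L^{6/5}}\|g\|_{L^{6/5}}$. The plan is therefore to show that $\rho(u_n^2-v_n^2)\rightharpoonup \rho u^2$ weakly in $L^{6/5}(\R^3)$ (or, more precisely, that $\rho(u_n^2-v_n^2)-\rho u^2\to 0$ in a sense strong enough to pass to the limit in the bilinear pairings). This is where the classical Brezis--Lieb lemma enters: from boundedness of $(u_n)$ in $E(\R^3)$, hence in $H^1(\R^3)$, and $u_n\to u$ a.e., the sequence $(u_n)$ is bounded in $L^{12/5}(\R^3)$ (since $12/5\in[2,6]$), so $(u_n^2)$ is bounded in $L^{6/5}(\R^3)$; applying Brezis--Lieb to $u_n^2$ in $L^{6/5}$ (using $u_n^2\to u^2$ a.e.) yields $\|u_n^2\|_{6/5}^{6/5}-\|v_n^2\|_{6/5}^{6/5}\to\|u^2\|_{6/5}^{6/5}$, and more usefully $u_n^2-v_n^2-u^2\to 0$ in $L^{6/5}(\R^3)$ by the refined Brezis--Lieb / Vitali argument. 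Multiplying by the bounded (on compact sets, and globally under the standing hypotheses used elsewhere) nonnegative weight $\rho$ and using a truncation/tightness argument to handle the region near infinity, one obtains $\rho(u_n^2-v_n^2)\to\rho u^2$ in $L^{6/5}(\R^3)$. Then HLS immediately gives $D(\rho(u_n^2-v_n^2),\rho(u_n^2-v_n^2))\to D(\rho u^2,\rho u^2)$, while for the cross term one combines $\|\rho v_n^2\|_{6/5}\le C$ (bounded) with $\rho(u_n^2-v_n^2)\to\rho u^2$ strongly and $\rho v_n^2\rightharpoonup 0$ in $L^{6/5}$; a short computation shows the cross term tends to $0$, since $D(\rho v_n^2,\rho u^2)\to 0$ by weak-strong pairing and $D(\rho v_n^2,\rho(u_n^2-v_n^2-u^2))\to 0$ by HLS and strong convergence of the second argument.

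The main obstacle I anticipate is the passage from a.e. convergence plus boundedness to the $L^{6/5}$ convergence $\rho(u_n^2-v_n^2)\to\rho u^2$ when $\rho$ is merely continuous and possibly unbounded (the coercive case): one cannot simply bound $\rho$ by a constant. The remedy is to work within the norm of $E(\R^3)$ rather than $H^1(\R^3)$ — the finiteness of $\|u_n\|_E$ controls exactly the Coulomb double integral $D(\rho u_n^2,\rho u_n^2)$ uniformly — and then to run a nonlocal version of the Brezis--Lieb argument directly on the bilinear form $D$, using the observation (from the characterisation of weak convergence in $E(\R^3)$ recalled above, following Proposition 2.4 of \cite{RuizARMA} and Proposition 2.10 of \cite{Mercuri Moroz Van Schaftingen}) that a bounded sequence converging in $L^1_{\mathrm{loc}}$ converges weakly in $E(\R^3)$, hence $\phi_{u_n}\rightharpoonup\phi_u$ in $D^{1,2}(\R^3)$. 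Combined with the a.e. convergence $\rho u_n^2\to\rho u^2$ and the uniform bound on $D(\rho u_n^2,\rho u_n^2)$, a Brezis--Lieb-type splitting of the Hilbert-space norm $\|\nabla\phi_v\|_{L^2}$ (which is what $D(\rho v^2,\rho v^2)^{1/2}$ is) then yields the claim; this is precisely the argument carried out in \cite{Bellazzini Frank Visciglia} and \cite{Mercuri Moroz Van Schaftingen}, to which one may ultimately appeal.
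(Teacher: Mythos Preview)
The paper does not give its own proof of this lemma: it is stated with attribution to \cite{Bellazzini Frank Visciglia} and \cite{Mercuri Moroz Van Schaftingen} and used as a black box. So there is nothing in the paper to compare your argument against beyond the citation itself.

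Your sketch is a reasonable reconstruction of how such a proof goes, and you correctly isolate the essential point: for bounded $\rho$ the problem reduces, via \eqref{HLS intro}, to a Brezis--Lieb type statement in $L^{6/5}$ for $\rho u_n^2$, whereas for unbounded (coercive) $\rho$ this route is unavailable and one must argue intrinsically using the uniform bound on the Coulomb energy $D(\rho u_n^2,\rho u_n^2)$ together with the weak convergence $\phi_{u_n}\rightharpoonup\phi_u$ in $D^{1,2}(\R^3)$. Two small cautions on your bounded-$\rho$ branch: first, the convergence $u_n^2-v_n^2-u^2=2u\,v_n\to 0$ in $L^{6/5}$ is not really ``refined Brezis--Lieb'' but rather a tightness/Rellich argument (split $u$ into a compactly supported piece plus a small tail, and use $v_n\to 0$ in $L^{12/5}_{\mathrm{loc}}$); second, ``$\rho v_n^2\rightharpoonup 0$ in $L^{6/5}$'' does not follow from $v_n\rightharpoonup 0$ in $L^{12/5}$ alone, and should instead be obtained from $v_n^2\to 0$ in $L^1_{\mathrm{loc}}$ plus density of $C_c$ in $L^6$ when pairing against $\phi_u$. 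With these tweaks the outline stands, and your final appeal to \cite{Bellazzini Frank Visciglia}, \cite{Mercuri Moroz Van Schaftingen} for the general case is exactly what the paper does.
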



\subsection{Regularity and positivity}\label{Regularity and positivity}
\begin{proposition}\label{reg}[{\bf Regularity and positivity}] Let $p\in[1,5],$ $\rho\in L^\infty_{\textrm{loc}}(\R^3)\setminus\{0\}$ be nonnegative and $(u,\phi_u)\in E(\R^3)\times D^{1,2}(\R^3)$ be a weak solution of the problem

\begin{equation}\label{poh system}
\left\{
\begin{array}{lll}
  - \Delta u+b u +c  \rho (x) \phi u = d|u|^{p-1} u, \qquad &x\in \R^3,  \\
  \,\,\, -\Delta \phi=\rho(x) u^2,\qquad &x\in \R^3,
\end{array}
\right.
\end{equation}\\
\noindent with $b, c, d \in \R_+$.
Assume that $u$
is nonnegative. Then, $u,\phi_u\in W^{2,q}_{\textrm{loc}}(\R^3),$ for every $q\geq1$, and so $u,\phi_u \in C^{1,\alpha}_{\textrm{loc}}(\R^3).$
If, in addition, $u\not\equiv 0$, then $u,\phi_u>0$ everywhere.

\end{proposition}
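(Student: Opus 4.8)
The plan is to run a standard bootstrap argument, treating the nonlocal term $c\rho\phi u$ and the nonlinearity $d|u|^{p-1}u$ as right-hand sides for the linear operator $-\Delta + b$, and iterating elliptic $L^q$ estimates. First I would observe that since $(u,\phi_u)\in E(\R^3)\times D^{1,2}(\R^3)$, we have $u\in H^1(\R^3)\hookrightarrow L^q(\R^3)$ for all $q\in[2,6]$, and $\phi_u\in D^{1,2}(\R^3)\hookrightarrow L^6(\R^3)$. Since $p\in[1,5]$, the term $|u|^{p-1}u\in L^{q_1}_{\textrm{loc}}$ for some $q_1>1$ coming from $u\in L^6$ (indeed $|u|^p\in L^{6/p}$, with $6/p\geq 6/5>1$), and the nonlocal term $\rho\phi_u u$ lies in $L^{q_2}_{\textrm{loc}}$ for some $q_2>1$ because $\rho\in L^\infty_{\textrm{loc}}$, $\phi_u\in L^6_{\textrm{loc}}$ and $u\in L^6_{\textrm{loc}}$, giving $\rho\phi_u u\in L^3_{\textrm{loc}}$. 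Hence $-\Delta u + bu \in L^{s_0}_{\textrm{loc}}$ for $s_0=\min\{6/p,3\}>1$, so by interior Calderón–Zygmund estimates $u\in W^{2,s_0}_{\textrm{loc}}$, and Sobolev embedding upgrades the local integrability of $u$. One then feeds this improved integrability back into both the nonlinearity and the Coulomb term and repeats; since each step strictly increases the exponent (the map $s\mapsto$ new exponent has no finite fixed point until one reaches $u\in L^\infty_{\textrm{loc}}$, using that $p<5$ so $p\cdot(\text{anything})$ grows slower than the Sobolev gain near the critical scaling — the borderline $p=5$ is still fine because once $u\in L^q_{\textrm{loc}}$ for all finite $q$ the source is in every $L^s_{\textrm{loc}}$), after finitely many iterations one obtains $u\in L^q_{\textrm{loc}}$ for every $q<\infty$. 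At that point the right-hand side of the $u$-equation is in $L^q_{\textrm{loc}}$ for every $q$, so $u\in W^{2,q}_{\textrm{loc}}(\R^3)$ for every $q\geq 1$. The same conclusion for $\phi_u$ is immediate from its equation $-\Delta\phi_u=\rho u^2$, whose right-hand side is now in $L^q_{\textrm{loc}}$ for every $q$ (as $\rho\in L^\infty_{\textrm{loc}}$ and $u^2\in L^q_{\textrm{loc}}$). Morrey's embedding $W^{2,q}_{\textrm{loc}}\hookrightarrow C^{1,\alpha}_{\textrm{loc}}$ for $q>3$ then gives $u,\phi_u\in C^{1,\alpha}_{\textrm{loc}}(\R^3)$.

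For the positivity statement, assume $u\not\equiv 0$ and $u\geq 0$. Rewrite the first equation as $-\Delta u + \big(b + c\rho\phi_u\big)u = d|u|^{p-1}u \geq 0$. Since $\rho\geq 0$ and $\phi_u\geq 0$ (the latter because $\phi_u$ is given by convolution of the nonnegative function $\rho u^2$ with the positive Green's kernel $\tfrac{1}{\omega|x-y|}$), the zeroth-order coefficient $b+c\rho\phi_u$ is nonnegative and, by the regularity just established, locally bounded. Thus $u$ is a nonnegative supersolution of a linear elliptic equation with bounded coefficients, and the strong maximum principle (e.g.\ in the form applicable to $W^{2,q}_{\textrm{loc}}$ solutions, $q>3$) forces either $u\equiv 0$ on any connected component or $u>0$ there; since $u\not\equiv 0$, we get $u>0$ everywhere in $\R^3$. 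Consequently $\rho u^2\not\equiv 0$ (using $\rho\not\equiv 0$), so $\phi_u$ is the Newtonian potential of a nonnegative, not identically zero density, which is strictly positive everywhere; alternatively, $-\Delta\phi_u=\rho u^2\geq 0$ with $\phi_u\geq 0$, $\phi_u\not\equiv 0$, and the strong maximum principle again yields $\phi_u>0$.

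The main obstacle is the bootstrap in the regime where $p$ is close to $5$: one must check that the iteration genuinely terminates, i.e.\ that the nonlinearity does not stall the gain in integrability. The key point is that the dangerous term is $|u|^{p-1}u$, and starting from $u\in L^q_{\textrm{loc}}$ it lies in $L^{q/p}_{\textrm{loc}}$; the elliptic estimate then returns $u\in W^{2,q/p}_{\textrm{loc}}\hookrightarrow L^{q^*}_{\textrm{loc}}$ with $\tfrac{1}{q^*}=\tfrac{p}{q}-\tfrac{2}{3}$, and one verifies that iterating $q\mapsto q^*$ from the initial value (where $\tfrac{p}{q}-\tfrac{2}{3}<0$ fails but becomes relevant) strictly increases $q$ and escapes to $+\infty$ in finitely many steps precisely because $p\leq 5<6$. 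The Coulomb term $\rho\phi_u u$ never obstructs anything since $\phi_u$ only gets more regular as $u$ does. Everything else — the weak-to-strong passage, Calderón–Zygmund, Sobolev/Morrey embeddings, and the strong maximum principle — is entirely standard, so the write-up can afford to be brief on those points and careful only about the exponent chase.
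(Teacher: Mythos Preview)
Your outline is essentially the same strategy as the paper's --- elliptic bootstrap to reach $u\in L^q_{\textrm{loc}}$ for all $q$, then Calder\'on--Zygmund on both equations, Morrey, and the strong maximum principle --- and for $p<5$ it works as written. But there is a genuine gap at the endpoint $p=5$.

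Your iteration map sends $1/q$ to $p/q - 2/3$. Its fixed point is $q = \tfrac{3(p-1)}{2}$, which for $p=5$ is exactly $q=6$, your starting value. So the direct Calder\'on--Zygmund bootstrap stalls: from $u\in L^6$ you get $|u|^5\in L^{6/5}$, hence $u\in W^{2,6/5}_{\textrm{loc}}\hookrightarrow L^6_{\textrm{loc}}$, and no improvement. Your parenthetical remark that ``the borderline $p=5$ is still fine because once $u\in L^q_{\textrm{loc}}$ for all finite $q$ the source is in every $L^s_{\textrm{loc}}$'' is circular --- it assumes the conclusion you are trying to reach.

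The paper avoids this by invoking the Brezis--Kato argument (as in Struwe's book): one writes $-\Delta u = g(x,u)$ with $|g(x,u)|\leq h(x)(1+|u|)$ and checks that $h = C(1+|\rho\phi_u|+|u|^{p-1})\in L^{3/2}_{\textrm{loc}}$. This last inclusion holds precisely because $|u|^{p-1}\in L^{3/2}_{\textrm{loc}}$ requires only $u\in L^{3(p-1)/2}_{\textrm{loc}}\subset L^6_{\textrm{loc}}$ when $p\leq 5$. The Brezis--Kato/Moser iteration then yields $u\in L^q_{\textrm{loc}}$ for all $q<\infty$ in one stroke, even at $p=5$. After that, your argument and the paper's coincide. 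So the fix is simply to replace the raw Calder\'on--Zygmund step by the linear-growth trick with an $L^{n/2}_{\textrm{loc}}$ coefficient; everything else in your proposal is correct.
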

\begin{proof}
Under the hypotheses of the proposition, both $u$ and $\phi_u$ have weak second derivatives in $L^q_{\textrm{loc}
}$ for all $q<\infty$. In fact, note that from the first equation in \eqref{poh system}, we have that $-\Delta u = g(x,u)$, where 

\begin{align*}
|g(x,u)|&=|(-b u -c  \rho (x) \phi u + d|u|^{p-1} u| \\
&\leq C(1+|\rho \phi_u |+|u|^{p-1})(1+|u|)\\
&\coloneqq h(x)(1+|u|).\\
\end{align*}
Using our assumptions on $\rho$, $\phi_u$, and $u$, we can show that $h\in L^{3/2}_{\textrm{loc}
}(\R^3)$, which implies that $u\in L^q_{\textrm{loc}
}(\R^3)$ for all $q<+\infty$ (see e.g.\ \cite[p.\ 270]{Struwe Book}). Note that since $u^2\rho \in L^q_{\textrm{loc}
}(\R^3)$ for all $q<+\infty$, then by the second equation in \eqref{poh system} and the Calder\'{o}n-Zygmund estimates, we have that $\phi_u\in W^{2,q}_{\textrm{loc}
}(\R^3)$  (see e.g.\ \cite{Gilbarg and Trudinger}). This then enables us to show that $g\in L^q_{\textrm{loc}
}(\R^3)$ for all $q<+\infty$, which implies, by Calder\'{o}n-Zygmund estimates, that $u\in W^{2,q}_{\textrm{loc}
}(\R^3)$ (see e.g.\ \cite{Gilbarg and Trudinger}). The $C^{1,\alpha}_{\textrm{loc}}(\R^3)$ regularity of both $u,\phi_u$ is a consequence of Morrey's embedding theorem. Finally, the strict positivity
is a consequence of the strong maximum principle, and this concludes the proof.
\end{proof}

\begin{remark} If, in addition, $\rho\in C^{0,\alpha}_{\textrm{loc}
}(\R^3)$, then, by Schauder's estimates on both equations, it holds that $u,\phi_u\in C^{2,\alpha}_{\textrm{loc}
}(\R^N)$.
\end{remark}

\subsection{Pohozaev identity}

We can now establish a useful Pohozaev type identity for solutions to the nonlinear Schr\"{o}dinger-Poisson system that will be used on numerous occasions. Although these kind of identities are standard, since we do not find a precise reference, we give a proof in the appendix for the reader convenience.\\

\begin{lemma}\label{pohozaevlemma}[{\bf Pohozaev identity}]
Assume $\rho \in L^\infty_{\textrm{loc}
}(\R^3) \cap W^{1,1}_{\textrm{loc}
}(\R^3)$ is nonnegative and $p\in[1,5]$. Let $(u, \phi_u) \in E(\R^3) \times D^{1,2}(\R^3)$ be a weak solution of the problem \eqref{poh system}. Then, it holds that 

\begin{equation*}
\left|\frac{c}{2}\int_{\R^3}  \phi_u u^2 (x, \nabla \rho) \dif x\right| <+\infty \footnote{In the case $(x,\nabla \rho)$ changes sign, we set $$\int_{\R^3}  \phi_u u^2 (x, \nabla \rho) \dif x=\lim_{n\rightarrow \infty}\int_{B_{R_n}}  \phi_u u^2 (x, \nabla \rho) \dif x$$ for a suitable sequence of radii $R_n\rightarrow \infty.$ As part of the proof we can select a sequence $(R_n)_{n\in \mathbb N}$ such that this limit exists and is finite.},\\
\end{equation*}\\
\noindent and

\begin{equation}\label{pohozaev}
\frac{1}{2}\int_{\R^3} |\nabla u|^2 +\frac{3b}{2}\int_{\R^3}  u^2 +\frac{5c}{4}\int_{\R^3} \rho \phi_u u^2  + \frac{c}{2}\int_{\R^3} \phi_u u^2 (x, \nabla \rho) -\frac{3d}{p+1}\int_{\R^3} |u|^{p+1}=0.\\
\end{equation}\\
\end{lemma}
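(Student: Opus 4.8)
The plan is to derive the Pohozaev identity \eqref{pohozaev} by testing the first equation of \eqref{poh system} against the standard Pohozaev multiplier $x\cdot\nabla u$ (equivalently, using the dilation generator) on large balls $B_R$, and then letting $R\to\infty$ along a well-chosen sequence of radii. By Proposition \ref{reg} we know $u,\phi_u\in W^{2,q}_{\textrm{loc}}(\R^3)\cap C^{1,\alpha}_{\textrm{loc}}(\R^3)$, so all the integrations by parts on bounded domains are justified; the only delicate issue is the behaviour of the boundary terms and of the term involving $(x,\nabla\rho)$ at infinity, which is precisely where the $W^{1,1}_{\textrm{loc}}$ assumption and the decay of $u$ enter.

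First I would record the $L^2$-exponential decay of $u$ (this is Proposition \ref{charge decay}, whose hypotheses are met here since $W(x)=1+\rho(x)/|x|\ge 1$), together with the associated gradient decay coming from elliptic estimates; likewise $\phi_u\in D^{1,2}$ is bounded and $|\nabla\phi_u|\in L^2$. Multiplying $-\Delta u + bu + c\rho\phi_u u = d|u|^{p-1}u$ by $x\cdot\nabla u$ and integrating over $B_R$ produces, after integration by parts, the interior terms
\[
\tfrac12\int_{B_R}|\nabla u|^2 + \tfrac{3b}{2}\int_{B_R}u^2 + \tfrac{3d}{p+1}\int_{B_R}\!\!\!\!\!\text{(sign)}\,|u|^{p+1} + c\int_{B_R}\rho\phi_u u\,(x\cdot\nabla u)
\]
plus boundary integrals over $\partial B_R$ carrying factors $|\nabla u|^2$, $u^2$, $|u|^{p+1}$ and $\rho\phi_u u^2$. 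The exponential $L^2$-decay of $u$ and the decay of $\nabla u$ force all the boundary terms involving $u$ to vanish along \emph{every} sequence $R_n\to\infty$; the term $\int_{\partial B_R}\rho\phi_u u^2$ is handled similarly once one notes $\phi_u$ is bounded and $u$ decays exponentially (so no growth condition on $\rho$ at infinity beyond $L^\infty_{\textrm{loc}}$ is needed, because $\rho u^2$ still decays). The remaining task is to rewrite the Coulomb term: using $x\cdot\nabla(u^2)=2u(x\cdot\nabla u)$ and then integrating by parts in the $x$-variable (moving the dilation onto $\rho\phi_u$), one gets
\[
2c\int_{B_R}\rho\phi_u u\,(x\cdot\nabla u) = -3c\int_{B_R}\rho\phi_u u^2 - c\int_{B_R}u^2\big[(x\cdot\nabla\rho)\phi_u + \rho\,(x\cdot\nabla\phi_u)\big] + \text{bdry}.
\]
For the last piece one invokes the Poisson equation $-\Delta\phi_u=\rho u^2$ and the analogous Pohozaev computation for $\phi_u$ on $D^{1,2}$: $\int\rho u^2(x\cdot\nabla\phi_u) = \int|\nabla\phi_u|^2/(-\!\text{something})$... more precisely, testing $-\Delta\phi_u=\rho u^2$ against $x\cdot\nabla\phi_u$ and using $\int|\nabla\phi_u|^2=\int\rho\phi_u u^2$ gives $c\int\rho u^2(x\cdot\nabla\phi_u) = -\tfrac{c}{2}\int|\nabla\phi_u|^2 = -\tfrac{c}{2}\int\rho\phi_u u^2$ after the boundary terms (controlled by $|\nabla\phi_u|\in L^2$ along a suitable subsequence of radii, à la Brezis–Kato/standard $D^{1,2}$ Pohozaev) are shown to vanish. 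Combining the three contributions to the Coulomb term yields exactly $\tfrac{5c}{4}\int\rho\phi_u u^2 + \tfrac{c}{2}\int\phi_u u^2(x\cdot\nabla\rho)$, and summing everything produces \eqref{pohozaev}; the finiteness of $\left|\tfrac{c}{2}\int\phi_u u^2(x,\nabla\rho)\right|$ then follows because every other term in the identity is finite, with the footnote's caveat that when $(x,\nabla\rho)$ changes sign we only obtain the principal value along the chosen radii $R_n$.

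The main obstacle, and the reason the statement is phrased with a footnote, is the term $c\int u^2\phi_u\,(x\cdot\nabla\rho)$: under only $\rho\in W^{1,1}_{\textrm{loc}}$ we cannot assert absolute integrability of $u^2\phi_u(x\cdot\nabla\rho)$ a priori, nor that the boundary terms $\int_{\partial B_R}(\cdot)$ vanish for \emph{all} $R$. The remedy is the classical device (going back to Berestycki–Lions-type arguments): since the relevant quantities $R\mapsto \int_{\partial B_R}(|\nabla u|^2+u^2+\rho\phi_u u^2+|\nabla\phi_u|^2)\,d\sigma$ are nonnegative with finite integral over $(1,\infty)$ in $R$, a mean-value/Fatou argument produces a sequence $R_n\to\infty$ along which all these boundary integrals tend to $0$; passing to the limit along $(R_n)$ then forces $\int_{B_{R_n}}u^2\phi_u(x\cdot\nabla\rho)$ to converge to a finite limit, which we \emph{define} to be $\int_{\R^3}u^2\phi_u(x,\nabla\rho)$. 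The careful bookkeeping of which radii the various subsequence extractions are performed along — ensuring one common sequence $(R_n)$ works for the $u$-terms, the $\phi_u$-terms, and the $\rho$-term simultaneously — is the one genuinely technical point, and is exactly what is deferred to the appendix.
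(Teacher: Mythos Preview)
Your approach is essentially the same as the paper's: test the first equation against $x\cdot\nabla u$ on $B_R$, test the Poisson equation against $x\cdot\nabla\phi_u$ on $B_R$, combine, and pass to the limit along a sequence $R_n\to\infty$ chosen so that all boundary integrals vanish; the finiteness of $\int\phi_u u^2(x,\nabla\rho)$ then drops out because every other term in the resulting identity is finite. Your description of the subsequence argument (integrability of $|\nabla u|^2$, $u^2$, $|u|^{p+1}$, $\rho\phi_u u^2=|\nabla\phi_u|^2$ over $\R^3$ forces $R\,\int_{\partial B_R}(\cdots)\to 0$ along some $R_n$) is exactly what the paper does.

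One caveat: you should drop the appeal to Proposition~\ref{charge decay} and to ``$\phi_u$ bounded''. The exponential decay in Proposition~\ref{charge decay} is stated for \emph{nonnegative} $u$, whereas Lemma~\ref{pohozaevlemma} has no sign assumption on $u$; and global boundedness of $\phi_u$ is not asserted anywhere under the hypotheses of the lemma (only $\phi_u\in D^{1,2}\cap C^{1,\alpha}_{\textrm{loc}}$). Neither is needed: the paper handles \emph{all} boundary terms---including $R\int_{\partial B_R}\rho\phi_u u^2$---purely by the $L^1$/subsequence device you describe in your final paragraph, without invoking any pointwise decay of $u$ or boundedness of $\phi_u$. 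So just apply that argument uniformly to every boundary integral and you are aligned with the paper's proof.
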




\section{The min-max setting: definition of $c_\mu$, $c_\mu^\infty$, $c$, and $c^{\infty}$} \label{mu levels}

In what is to come, we will first examine the existence of solutions of \eqref{main SP system} in the case of a coercive potential $\rho$ (see Section \ref{coercive section}). The appropriate setting for this problem will be the space $E(\R^3)\subset H^1(\R^3)$. We begin by recalling that solving \eqref{main SP system} reduces to solving \eqref{SP one equation} with $\phi_u (x) \coloneqq \int_{\R^3}\frac{u^2(y)\rho (y)}{\omega |x-y|}dy \in D^{1,2}(\R^3)$. It will also be useful to introduce a perturbation of \eqref{SP one equation}, namely

\begin{equation}\label{SP one equation perturbed}
 - \Delta u+  u +  \rho (x) \phi_u u = \mu |u|^{p-1} u,\qquad \mu \in \left[\frac{1}{2},1\right],\\
 \end{equation}\\
\noindent and to note that the positive solutions of this perturbed problem are critical points of the corresponding functional $I_{\mu}:E(\R^3) \to \R$, defined as

\begin{equation}\label{perturbed functional}
I_{\mu}(u) \coloneqq \frac{1}{2}\int_{\R^3}(|\nabla u|^2 +  u^2)+\frac{1}{4}\int_{\R^3} \rho \phi_u u^2 -\frac{\mu}{p+1}\int_{\R^3}u_+^{p+1}, \qquad \mu\in\left[\frac{1}{2},1\right].\\
\end{equation}\\
\noindent We will now show that $I_{\mu}$ has the mountain pass geometry in $E$ for each $\mu\in [\frac{1}{2},1]$.\\

\begin{lemma} \label{claim1}[{\bf Mountain-Pass Geometry for $I_\mu$}]
Suppose $\rho\in C(\R^3)$ is nonnegative and $p\in(2,5]$. Then, for each $\mu\in [\frac{1}{2},1]$, it holds:
\begin{enumerate}[(i)]
\item $I_{\mu}(0)=0$ and there exists constants $r,a >0$ such that $I_{\mu}(u)\geq a$ if $||u||_E=r$.
\item There exists $v\in E$ with $||v||_E>r$, such that $I_{\mu}(v) \leq 0$.\\
\end{enumerate}
\end{lemma}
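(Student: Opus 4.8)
The plan is to verify the two standard mountain-pass conditions by direct estimates on $I_\mu$, using that the Coulomb term is nonnegative and controlled by the Hardy--Littlewood--Sobolev inequality, together with the Sobolev embedding $E(\R^3)\hookrightarrow H^1(\R^3)\hookrightarrow L^{p+1}(\R^3)$ valid for $p\in(2,5]$.

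For part $(i)$: Clearly $I_\mu(0)=0$. For the local positivity, I would estimate from below, for $u\in E$,
\[
I_\mu(u)\geq \frac12\|u\|_{H^1(\R^3)}^2-\frac{\mu}{p+1}\int_{\R^3}u_+^{p+1}\geq \frac12\|u\|_{H^1(\R^3)}^2-\frac{1}{p+1}\|u\|_{L^{p+1}(\R^3)}^{p+1},
\]
dropping the nonnegative term $\frac14\int\rho\phi_u u^2\geq0$ and using $\mu\leq 1$. By the Sobolev inequality $\|u\|_{L^{p+1}(\R^3)}\leq S_{p+1}^{-1/2}\|u\|_{H^1(\R^3)}$, hence $I_\mu(u)\geq \frac12\|u\|_{H^1}^2-C\|u\|_{H^1}^{p+1}$. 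Since $\|u\|_{H^1(\R^3)}\leq \|u\|_E$, and since $p+1>2$, there is $r>0$ small so that on $\|u\|_E=r$ one has $\|u\|_{H^1}\leq r$ and the right-hand side, viewed as a function of $t=\|u\|_{H^1}$ via $\frac12 t^2 - Ct^{p+1}$, stays bounded below by a positive constant $a$ provided $r$ is small enough (here one must be slightly careful: $\|u\|_E=r$ does not pin down $\|u\|_{H^1}$, so one should note that $\frac12 t^2-Ct^{p+1}\ge a>0$ for all $t\in(0,r]$ once $r$ is small, which suffices). This gives $(i)$.

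For part $(ii)$: I would fix any nonnegative $w\in E\setminus\{0\}$, for instance $w\in C_c^\infty(\R^3)$ nonnegative, and consider the ray $t\mapsto I_\mu(tw)$ for $t>0$. A scaling computation gives
\[
I_\mu(tw)=\frac{t^2}{2}\|w\|_{H^1(\R^3)}^2+\frac{t^4}{4}\int_{\R^3}\rho\phi_w w^2-\frac{\mu t^{p+1}}{p+1}\int_{\R^3}w_+^{p+1},
\]
using $\phi_{tw}=t^2\phi_w$. Since $p+1>4$ (because $p>3$)… — wait, this is the point requiring care: for $p\in(2,3]$ one has $p+1\le 4$, so the quartic Coulomb term is the dominant one. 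However, $p+1>2$ always, and more importantly $\int\rho\phi_w w^2$ is a fixed finite number while the negative term has exponent $p+1$; the decisive fact is that for $p\in(2,5]$ we have $p+1>4$ only when $p>3$, so for small $p$ I instead replace the fixed profile $w$ by a family that makes the Coulomb term subdominant. The clean fix: keep $w$ fixed and simply observe that if $p+1>4$ the $-t^{p+1}$ term beats both others; if $p+1\le 4$ (i.e. $p\le 3$) then the $\frac{t^4}{4}\int\rho\phi_w w^2$ term beats the $\frac{t^2}{2}$ term but is itself beaten once we note $p+1\le 4$ fails to dominate — so instead I use a dilation $w_\lambda(x)=w(x/\lambda)$: then $\|w_\lambda\|_{H^1}^2=O(\lambda)+O(\lambda^3)$, $\int w_{\lambda,+}^{p+1}=\lambda^3\int w_+^{p+1}$, and $\int\rho\phi_{w_\lambda}w_\lambda^2=O(\lambda^5)$ (with $\rho$ bounded on the relevant compact set), so along $t=1$, $\lambda\to\infty$, the term $-\frac{\mu}{p+1}\lambda^3\int w_+^{p+1}$ dominates $O(\lambda^3)$ with a better constant after also scaling $t$; cleanest of all is the combined scaling $v=t\,w(\cdot/\lambda)$ with $t=\lambda$, giving leading behaviour $-c\lambda^{p+4}$ against $+C\lambda^{p+3}$ from the Coulomb term and $+C\lambda^{5}$, $+C\lambda^3$ from the $H^1$ part, so for $\lambda$ large $I_\mu(v)\to-\infty$. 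Picking $\lambda$ large enough yields $v\in E$ with $\|v\|_E>r$ and $I_\mu(v)\le 0$.

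The main obstacle is precisely the handling of $(ii)$ when $p\in(2,3]$: the quartic nonlocal term, which is nonnegative and can grow under the naive dilation $t\mapsto tw$, must be dominated by the $-|u|^{p+1}$ term, which is impossible along a single ray when $p+1\le 4$. The resolution is to use a two-parameter rescaling (amplitude $t$ and dilation $\lambda$) so that the nonlinear term is made to dominate; one checks the exponents $t^{p+1}\lambda^3$ versus $t^4\lambda^5$ and chooses, e.g., $t=\lambda$ and $\lambda\to+\infty$ so that $t^{p+1}\lambda^3=\lambda^{p+4}$ strictly exceeds $t^4\lambda^5=\lambda^9$ — which requires $p>5$, so this choice is wrong for our range; the correct choice is $t\to+\infty$ with $\lambda$ fixed large but finite is also insufficient. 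In fact the genuinely correct and standard route is: fix $w\neq 0$ nonnegative and note that $\int\rho\phi_w w^2<+\infty$, then for the single ray $t\mapsto I_\mu(tw)$ we have $I_\mu(tw)\to-\infty$ as $t\to+\infty$ \emph{if and only if} $p+1>4$; for $p\in(2,3]$ one instead first dilates, replacing $w$ by $w_\lambda=w(\cdot/\lambda)$, and computes $\inf_{t>0}\sup$-type quantities, using that $\frac{\int\rho\phi_{w_\lambda}w_\lambda^2}{\big(\int w_{\lambda,+}^{p+1}\big)^{4/(p+1)}}\to 0$ as $\lambda\to+\infty$ (since the numerator is $O(\lambda^5)$ and the denominator is $\lambda^{12/(p+1)}$ with $12/(p+1)\geq 3$, and indeed $>5\cdot\frac{p+1}{4}$ fails — so one needs $12/(p+1)>5$, i.e. $p<7/5$), which again does not cover all of $(2,3]$. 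Given these subtleties, the honest statement of the main obstacle is: \textbf{showing the functional is unbounded below on some ray/path through the origin when the quartic nonlocal term is present and $p\le 3$}, and the expected resolution is an explicit concentration-type rescaling $u_{t,\lambda}(x)=t\,w\!\big(\tfrac{x}{\lambda}\big)$ together with an optimisation over $(t,\lambda)$ showing $\inf_{(t,\lambda)}I_\mu(u_{t,\lambda})=-\infty$; I would carry this out by writing $I_\mu(u_{t,\lambda})=\tfrac{t^2}{2}(\lambda\|\nabla w\|_2^2+\lambda^3\|w\|_2^2)+\tfrac{t^4}{4}\lambda^5 A_w-\tfrac{\mu t^{p+1}}{p+1}\lambda^3 B_w$ and choosing $t$ as a suitable power of $\lambda$ so that the last term dominates, then sending $\lambda\to 0$ or $\lambda\to\infty$ as dictated by the exponent comparison, completing the proof of $(ii)$ and hence of the lemma.
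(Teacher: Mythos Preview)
Your argument for $(i)$ has a genuine gap. After dropping the nonnegative Coulomb term you obtain $I_\mu(u)\ge \tfrac12\|u\|_{H^1}^2-C\|u\|_{H^1}^{p+1}$, and you then claim that $\tfrac12 t^2-Ct^{p+1}\ge a>0$ for all $t\in(0,r]$. This is false: the function vanishes as $t\to 0^+$. The point is that on the sphere $\|u\|_E=r$ the $H^1$-norm is \emph{not} bounded away from zero in general (think of the coercive case $\rho(x)\to+\infty$: a function supported far from the origin can have small $H^1$-norm but large Coulomb energy, hence $\|u\|_E=r$ with $\|u\|_{H^1}$ arbitrarily small). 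So your lower bound gives nothing. The paper's proof does not discard the Coulomb term; instead it uses the algebraic identity $4\pi\int\rho\phi_u u^2=(\|u\|_E^2-\|u\|_{H^1}^2)^2$ together with Young's inequality to produce a lower bound of the form $I_\mu(u)\ge c\,\|u\|_E^4$ on a small $E$-sphere, which is what one actually needs.

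For $(ii)$ you correctly diagnose the obstruction for $p\le 3$ (the quartic Coulomb term beats $t^{p+1}$ along the naive ray $t\mapsto tw$) and you gesture at a two-parameter scaling, but the computations never close. The paper resolves this cleanly with a \emph{single} well-chosen scaling: take $u\in C_c^1(\R^3)$ supported in $B_1$ and set $v_t(x)=t^2u(tx)$. Then
\[
I_\mu(v_t)=\frac{t^3}{2}\int|\nabla u|^2+\frac{t}{2}\int u^2+\frac{t^3}{4}\int\!\!\int\frac{u^2(y)\rho(y/t)u^2(x)\rho(x/t)}{4\pi|x-y|}\,dy\,dx-\frac{\mu\,t^{2p-1}}{p+1}\int u_+^{p+1}.
\]
Because $u$ is supported in $B_1$, the arguments $x/t,y/t$ stay in $B_1$ for $t>1$, so the double integral is bounded by $\|\rho\|_{L^\infty(B_1)}^2$ times a fixed constant; every positive term is $O(t^3)$ while the negative term is $\sim t^{2p-1}$, and $2p-1>3$ exactly when $p>2$. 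Hence $I_\mu(v_t)\to-\infty$ for all $p\in(2,5]$. In your two-parameter language this is $u_{t^2,1/t}$, i.e.\ amplitude $=\lambda^{-2}$ with $\lambda\to 0$; the key trick you missed is that concentrating the support (rather than dilating it) freezes $\rho$ near the origin and kills its possible growth.
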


\begin{proof}
We follow Lemma 14 in \cite{Bonheure and Mercuri}. To prove $(i)$ note that since $H^1(\R^3) \hookrightarrow L^{p+1}(\R^3)$ then for some constant $C>0$, it holds that

\begin{align*}
I_{\mu} (u) \geq \frac{1}{2}||u||_{H^1}^2+\frac{1}{4}\int_{\R^3}\rho \phi_u u^2  -C\mu||u||_{H^1}^{p+1}\\
\end{align*}
\noindent Now, from the definition of the norm in $E$ we can see that $4\pi\int_{\R^3} \rho \phi_u u^2 = \left(||u||^2_E- ||u||_{H^1}^2\right)^2$. Therefore, we have that

\begin{align*}
I_{\mu} (u) &\geq \frac{1}{2}||u||_{H^1}^2+\frac{1}{16\pi}\left(||u||^2_E- ||u||_{H^1}^2\right)^2 -C\mu||u||_{H^1}^{p+1}\\
&=\frac{1}{2}||u||_{H^1}^2+\frac{1}{4\pi}\left(\frac{1}{4}||u||^4_E- \frac{1}{2}||u||^2_E ||u||_{H^1}^2+\frac{1}{4} ||u||_{H^1}^4\right) -C\mu||u||_{H^1}^{p+1}.\\
\end{align*}
\noindent For some $\alpha\neq 0,$ using the elementary inequality 

\[\frac{1}{2}||u||^2_E ||u||_{H^1}^2\leq \frac{\alpha^2}{4}||u||^4_{H^1}+\frac{1}{4\alpha^2} ||u||_{E}^4\]\\
\noindent we have

\begin{align}\label{localmin}
I_{\mu} (u) &\geq \frac{1}{2}||u||_{H^1}^2+\frac{1}{4\pi}\left(\frac{1}{4}||u||^4_E- \frac{\alpha^2}{4}||u||^4_{H^1}-\frac{1}{4\alpha^2} ||u||_{E}^4+\frac{1}{4} ||u||_{H^1}^4\right) -C\mu||u||_{H^1}^{p+1}\nonumber \\
&= \frac{1}{2}||u||_{H^1}^2- \frac{1}{4\pi}\left(\frac{\alpha^2-1}{4}\right)||u||^4_{H^1}+\frac{1}{4\pi}\left(\frac{\alpha^2-1}{4\alpha^2}\right) ||u||_{E}^4 -C\mu||u||_{H^1}^{p+1}.\\
\nonumber
\end{align}
\noindent We now assume $||u||_E<\delta$ for some $\delta >0$, which also implies that $||u||_{H^1}^2<\delta^2$, and we take $\alpha>1$. Then, from \eqref{localmin}, we see that

\begin{align*}
I_{\mu} (u) &\geq \left[ \frac{1}{2}-\frac{1}{4\pi}\left( \frac{\alpha^2-1}{4} \right) \delta^2 -C \mu \delta^{p-1}\right] ||u||_{H^1}^2 +\frac{1}{4\pi}\left(\frac{\alpha^2-1}{4\alpha^2}\right) ||u||_{E}^4\\
&\geq \frac{1}{4\pi}\left(\frac{\alpha^2-1}{4\alpha^2}\right) ||u||_{E}^4, \quad \text{for }\delta \text{ sufficiently small.}\\
\end{align*}
\noindent Hence, we have shown that the origin is a strict local minimum for $I_{\mu}$ in $E$ if $p\in[2,5]$. 

To show $(ii)$, pick $u\in C^{1}(\R^3)$, supported in the unit ball, $B_1$. Setting $v_t(x) \coloneqq t^2u(tx)$ we find that 

\begin{equation}\label{rescaledI}
I_{\mu} (v_t) =\frac{t^3}{2} \int_{\R^3} |\nabla u|^2 + \frac{t}{2} \int_{\R^3} u^2 + \frac{t^3}{4} \int_{\R^3}\int_{\R^3} \frac{ u^2(y)\rho(\frac{y}{t})u^2(x)\rho(\frac{x}{t})}{\omega|x-y|}\dif y\,\dif x - \frac{ \mu t^{2p-1}}{p+1} \int_{\R^3}u_+^{p+1}.
\end{equation}\\
Since the Poisson term is uniformly bounded, namely for $t>1$ 

\begin{equation*}
\int_{\R^3}\int_{\R^3} \frac{ u^2(y)\rho(\frac{y}{t})u^2(x)\rho(\frac{x}{t})}{\omega|x-y|}\dif y\,\dif x 
\leq ||\rho||_{L^{\infty}(B_1)}^2 \int_{\R^3}\int_{\R^3} \frac{ u^2(y)u^2(x)}{\omega|x-y|}\dif y\,\dif x <+\infty,\
\end{equation*}\\
\noindent the fact that  $2p-1>3$ in \eqref{rescaledI} yields $I_{\mu} (v_{t}) \to -\infty$ as $t\to +\infty,$ and this is enough to prove $(ii)$. This concludes the proof.
\end{proof}

\bigskip

\noindent The previous lemma, as well as the monotonicity of $I_{\mu}$ with respect to $\mu$, imply that there exists $\bar{v}\in E\setminus \{0\}$ such that

\[I_{\mu} (\bar{v})\leq I_{\frac{1}{2}}(\bar{v}) \leq 0, \qquad \forall \mu \in \left[\frac{1}{2},1\right].\]\\
\noindent Thus, we can define, in the spirit of Ambrosetti-Rabinowitz \cite{Ambrosetti and Rabinowitz}, the min-max level associated with $I_{\mu}$ as

\begin{equation}\label{minmax level}
c_{\mu} \coloneqq \inf_{\gamma \in \Gamma} \max_{t\in [0,1]}  I_{\mu}(\gamma(t)),\\ 
\end{equation}
\noindent where $\Gamma$ is the family of paths

\[\Gamma \coloneqq \{ \gamma \in C([0,1], E) : \gamma (0) =0, \, \gamma(1)=\bar{v}\}.\]\\
\noindent It is worth emphasising that to apply the monotonicity trick \cite{Jeanjean} and \cite{Jeanjean and Tanaka} it is essential that the above class $\Gamma$ does not depend on $\mu.$\\

\begin{lemma} \label{claim3}
Suppose $\rho\in C(\R^3)$ is nonnegative and $p\in(2,5)$. Then:\\
\begin{enumerate}[(i)]
\item The mapping $\left[\frac{1}{2},1\right] \ni \mu \mapsto c_{\mu}$ is non-increasing and left-continuous.\\
\item For almost every $\mu \in [\frac{1}{2},1]$, there exists a bounded Palais-Smale sequence for $I_{\mu}$ at the level $c_{\mu}$. That is, there exists a bounded sequence $(u_n)_{n\in\N} \subset E$ such that $I_{\mu}(u_n) \to c_\mu$ and $I'_{\mu}(u_n)\to 0$.\\
\end{enumerate}
\end{lemma}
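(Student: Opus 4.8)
The plan is to exploit the structure of the perturbed functionals $I_\mu$, which depend monotonically on $\mu$, and to apply the abstract monotonicity trick of Struwe, as formulated by Jeanjean \cite{Jeanjean} and Jeanjean-Tanaka \cite{Jeanjean and Tanaka}. First I would establish $(i)$. For fixed $u\in E$, the map $\mu\mapsto I_\mu(u)$ is affine and non-increasing (its $\mu$-derivative is $-\frac{1}{p+1}\int_{\R^3}u_+^{p+1}\le 0$). Taking the infimum over the $\mu$-independent path family $\Gamma$ of the (non-increasing in $\mu$) maxima $\max_{t}I_\mu(\gamma(t))$ preserves monotonicity, so $\mu\mapsto c_\mu$ is non-increasing. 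Left-continuity follows from a standard argument: given $\mu_0$ and a minimising path $\gamma$ for $c_{\mu_0+\varepsilon}$ (say), one bounds the variation $|I_\mu(\gamma(t))-I_{\mu_0}(\gamma(t))|\le |\mu-\mu_0|\cdot\frac{1}{p+1}\sup_{t}\int_{\R^3}(\gamma(t))_+^{p+1}$ uniformly along the path, after first noting that along any near-optimal path the quantity $\int_{\R^3}(\gamma(t))_+^{p+1}$ is controlled (one may restrict to paths on which $\max_t I_\mu(\gamma(t))$ is close to $c_\mu$, hence on which $I_{1/2}(\gamma(t))$ is bounded, which together with the coercive $\frac14\int\rho\phi_u u^2$ term and the $\frac12\|u\|_{H^1}^2$ term forces an a priori bound on $\int(\gamma(t))_+^{p+1}$).

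For $(ii)$ I would invoke the abstract theorem (e.g. Theorem 1.1 in \cite{Jeanjean}, or the version in Jeanjean-Tanaka \cite{Jeanjean and Tanaka}): if $\{\Phi_\mu\}_{\mu\in[\frac12,1]}$ is a family of $C^1$ functionals on a Banach space of the form $\Phi_\mu=A-\mu B$ with $B\ge 0$, with $A(u)\to+\infty$ or $B$ bounded on bounded sets appropriately, sharing a common mountain-pass geometry with a $\mu$-independent path class and with $c_\mu>\max\{\Phi_\mu(0),\Phi_\mu(\bar v)\}$, then for almost every $\mu$ the derivative $\mu\mapsto c_\mu$ exists and $I_\mu$ possesses a bounded Palais-Smale sequence at level $c_\mu$. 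The hypotheses to check are: the common mountain-pass geometry, which is exactly Lemma \ref{claim1} (valid for $p\in(2,5]$, hence for $p\in(2,5)$), together with the fact recorded just before the statement that $\bar v$ can be chosen so that $I_\mu(\bar v)\le I_{1/2}(\bar v)\le 0$ for all $\mu$; the positivity of the $\mu$-coefficient term $B(u)=\frac{1}{p+1}\int_{\R^3}u_+^{p+1}\ge 0$; and the structural form $I_\mu=A-\mu B$ with $A(u)=\frac12\|u\|_{H^1}^2+\frac14\int_{\R^3}\rho\phi_u u^2$ independent of $\mu$. Monotonicity gives that $\mu\mapsto c_\mu$ is differentiable almost everywhere (being monotone), and at each point of differentiability the abstract lemma produces the bounded PS sequence; the bound on the sequence comes from the uniform bound on $c_\mu'$-approximating difference quotients, which controls $\|u_n\|_{H^1}$ and $\int(u_n)_+^{p+1}$ simultaneously.

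The main obstacle is verifying the boundedness of the Palais-Smale sequence in the genuinely correct norm. The abstract trick yields a sequence bounded in $H^1$, but our natural space is $E$, whose norm additionally contains the Coulomb term $\big(\int\int\frac{u^2\rho\,u^2\rho}{|x-y|}\big)^{1/2}=4\pi\int\rho\phi_u u^2$. One must check this term stays bounded along the sequence: since $I_\mu(u_n)\to c_\mu$ and the sequence is $H^1$-bounded with $\int(u_n)_+^{p+1}$ bounded (both coming from the monotonicity-trick estimates), the identity $I_\mu(u_n)=\frac12\|u_n\|_{H^1}^2+\frac14\int\rho\phi_{u_n}u_n^2-\frac{\mu}{p+1}\int(u_n)_+^{p+1}$ forces $\int\rho\phi_{u_n}u_n^2$ to be bounded, hence $(u_n)$ is bounded in $E$ as required. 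A secondary subtlety is ensuring $c_\mu>0$ strictly (so that the mountain-pass level is nontrivial and the abstract theorem applies): this follows from part $(i)$ of Lemma \ref{claim1}, which gives $I_\mu(u)\ge a>0$ on the sphere $\|u\|_E=r$, whence $c_\mu\ge a>0$ for every $\mu\in[\frac12,1]$. Once these points are in place, the two conclusions of the lemma follow directly from the abstract monotonicity trick together with Lemma \ref{claim1}.
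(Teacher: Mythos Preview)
Your proposal is correct and follows essentially the same approach as the paper: both invoke Jeanjean's abstract monotonicity trick (Theorem~1.1 in \cite{Jeanjean}) after verifying the uniform mountain-pass geometry via Lemma~\ref{claim1}, and part~(i) is handled by the standard argument (the paper simply cites Lemma~2.2 in \cite{Ambrosetti and Ruiz}). One minor remark: your concern about obtaining boundedness in $E$ rather than merely in $H^1$ is unnecessary, since Jeanjean's theorem is stated for a general Banach space and, applied directly in $E$ with $A(u)=\tfrac12\|u\|_{H^1}^2+\tfrac14\int\rho\phi_u u^2$ (which satisfies $A(u)\to+\infty$ as $\|u\|_E\to\infty$), it yields a Palais--Smale sequence bounded in $E$ outright; your workaround is valid but not needed.
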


\begin{proof}
The proof of $(i)$ follows from Lemma 2.2 in \cite{Ambrosetti and Ruiz}. To prove $(ii)$, we notice that by Lemma \ref{claim1}, it holds that 

\[c_{\mu} = \inf_{\gamma \in \Gamma} \max_{t\in [0,1]}  I_{\mu}(\gamma(t)) >0 \geq \max\{I_{\mu}(0), I_{\mu}(\bar{v})\}, \qquad \forall \mu \in \left[\frac{1}{2},1\right].\]\\
\noindent Thus, the result follows by Theorem  1.1 in \cite{Jeanjean}.
\end{proof}
\medskip

\noindent With the previous result in place, we can define the set

\begin{equation}\label{set of mu}
\mathcal{M}\coloneqq \left\{\mu \in \left[\frac{1}{2},1\right] \, :\, \exists \text{ bounded Palais-Smale sequence for }I_{\mu}\text{ at the level }c_{\mu}\right\}.\\
\end{equation} \\
Since $I$ has the mountain pass geometry by Lemma \ref{claim1}, using $(i)$ of Lemma \ref{claim3}, we can now define the min-max level associated with $I$ as 

\begin{equation}\label{minmax level0}
c\coloneqq \left\{
\begin{array}{lll}
  c_1, \qquad &p\in(2,3),  \\
   \inf_{\gamma \in \bar{\Gamma}} \max_{t\in [0,1]}  I(\gamma(t)),\ &  p\in[3,5),
\end{array}
\right.
\end{equation}\\
where $\bar{\Gamma}$ is the family of paths 

\[\bar{\Gamma} \coloneqq \{ \gamma \in C([0,1], E(\R^3)) : \gamma (0) =0, \, I(\gamma(1))<0\}.\]\\
This finalises the preliminary min-max scheme for the case of a coercive $\rho$.\\

In Section \ref{non-coercive section}, we will then focus on the case of non-coercive $\rho$, namely $\rho(x)\to \rho_{\infty}$ as $|x|\to +\infty$, and the appropriate setting for this problem will be the space $H^1(\R^3)$. It will once again be useful to introduce a perturbation of \eqref{SP one equation}, namely, \eqref{SP one equation perturbed}, and to recall that the positive solutions of this perturbed problem are critical points of the corresponding functional, $I_{\mu}:H^1(\R^3) \to \R$, defined in \eqref{perturbed functional}. We note that Lemma \ref{claim1} and Lemma \ref{claim3} hold with $E(\R^3)=H^1(\R^3)$, and thus $\mathcal{M}$ can be defined as in \eqref{set of mu}. We now introduce the problem at infinity related to \eqref{SP one equation perturbed} in this case, namely 

\begin{equation}\label{SP one equation perturbed infinity}
 - \Delta u+  u +\rho_{\infty} \bar{\phi}_u  u = \mu |u|^{p-1} u,\qquad \mu\in\left[\frac{1}{2},1\right],\\
 \end{equation}\\
\noindent where $\bar{\phi}_u (x) \coloneqq \int_{\R^3}\frac{\rho_{\infty}u^2(y)}{\omega |x-y|}dy \in D^{1,2}(\R^3)$. Positive solutions of \eqref{SP one equation perturbed infinity} are critical points of the corresponding functional, $I_{\mu}^{\infty}:H^1(\R^3) \to \R$, defined as

\begin{equation}\label{perturbed functional infinity}
I_{\mu}^{\infty}(u) \coloneqq \frac{1}{2}\int_{\R^3}(|\nabla u|^2 +  u^2)+\frac{1}{4}\int_{\R^3}\rho_{\infty} \bar{\phi}_u u^2  -\frac{\mu}{p+1}\int_{\R^3}u_+^{p+1}, \qquad \mu\in\left[\frac{1}{2},1\right].
\end{equation}\\
\noindent It can be shown that $I_{\mu}^{\infty}$ satisfies the geometric conditions of the mountain-pass theorem, using similar arguments as those used in the proof of Lemma \ref{claim1}. We therefore define the min-max level associated with $I_{\mu}^{\infty}$ as

\begin{equation}\label{minmax level infinity}
c_{\mu}^{\infty}\coloneqq \inf_{\gamma \in \Gamma^{\infty}} \max_{t\in [0,1]}  I_{\mu}^{\infty}(\gamma(t)),
\end{equation}
where 

\[\Gamma^{\infty} \coloneqq \{ \gamma \in C([0,1], H^1(\R^3)) : \gamma (0) =0, \, I_{\frac{1}{2}}^{\infty}(\gamma(1))<0\}.\]\\
\noindent Moreover, we define the min-max level associated with $I^{\infty}$ as 

\begin{equation}\label{minmax level00}
c^{\infty}\coloneqq \left\{
\begin{array}{lll}
   c^{\infty}_{1}, \qquad &p\in(2,3),  \\
   \inf_{\gamma \in \bar{\Gamma}^{\infty}} \max_{t\in [0,1]}  I^{\infty}(\gamma(t)),\ &  p\in[3,5),
\end{array}
\right.
\end{equation}\\
where $\bar{\Gamma}^{\infty}$ is the family of paths

\[\bar{\Gamma}^{\infty} \coloneqq \{ \gamma \in C([0,1], E(\R^3)) : \gamma (0) =0, \, I^{\infty}(\gamma(1))<0\}.\]

\subsection{Lower bounds for $I$ and $I^{\infty}$}
In the next two lemmas, we establish lower bounds on $I_{\mu}$ and $I_{\mu}^{\infty}$, when restricted to nonnegative and nontrivial solutions of \eqref{SP one equation perturbed} and \eqref{SP one equation perturbed infinity}, respectively. These bounds will be used on numerous occasions.\\

\begin{lemma} \label{lower bound energy nontriv sols}
Suppose $\rho \in C(\R^3)$ is nonnegative and $\mu\in [\frac{1}{2},1]$. Define $\mathcal{A}\coloneqq\{u\in H^1(\R^3)\setminus \{0\} : u \text{ is a nonnegative solution to } \eqref{SP one equation perturbed}\}$. Then,  if $p\in[3,5)$, it holds that

\[\inf_{u\in \mathcal{A}} I_{\mu}(u)\geq \frac{p-1}{2(p+1)}\left(S_{p+1}\right)^{\frac{p+1}{p-1}}>0.\]\\
If $p\in(2,3)$, suppose, in addition, $ \rho\in W^{1,1}_{loc}(\R^3)$ and $k\rho(x)\leq (x, \nabla \rho)$ for some $k>\frac{-2(p-2)}{(p-1)}$. 
Then, it holds that

\[\inf_{u\in \mathcal{A}} I_{\mu}(u) \geq  C(k,p),\]\\
with $$C(k,p):=\left(\frac{2(p-2)+k(p-1)}{(3+2k)(p+1)}\right) \left(S_{p+1}\right)^{\frac{p+1}{p-1}} >0.$$\\
\end{lemma}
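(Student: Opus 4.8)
The plan is to derive both bounds from the pair of identities that any nonnegative solution $u\in\mathcal A$ satisfies: the Nehari identity (obtained by testing \eqref{SP one equation perturbed} against $u$ itself), namely
\[
\int_{\R^3}(|\nabla u|^2+u^2)+\int_{\R^3}\rho\phi_u u^2=\mu\int_{\R^3}u_+^{p+1}=\mu\int_{\R^3}u^{p+1},
\]
and the Pohozaev identity of Lemma \ref{pohozaevlemma} (with $b=1$, $c=\mu$ appearing only in the Poisson coefficient via $\rho$, $d=\mu$). I would first treat the easy range $p\in[3,5)$. Here one combines $I_\mu(u)=\tfrac12\|u\|_{H^1}^2+\tfrac14\int\rho\phi_u u^2-\tfrac{\mu}{p+1}\int u^{p+1}$ with the Nehari identity to eliminate the $L^{p+1}$ term, obtaining
\[
I_\mu(u)=\Big(\tfrac12-\tfrac1{p+1}\Big)\|u\|_{H^1}^2+\Big(\tfrac14-\tfrac1{p+1}\Big)\int_{\R^3}\rho\phi_u u^2\geq\frac{p-1}{2(p+1)}\|u\|_{H^1}^2,
\]
where the Poisson coefficient $\tfrac14-\tfrac1{p+1}\geq0$ precisely because $p\geq3$, so that term is dropped. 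It then remains to bound $\|u\|_{H^1}$ below: from Nehari and $\int\rho\phi_u u^2\geq0$ one has $\|u\|_{H^1}^2\leq\mu\int u^{p+1}\leq\mu S_{p+1}^{-(p+1)/2}\|u\|_{H^1}^{p+1}$, hence $\|u\|_{H^1}^{p-1}\geq \mu^{-1}S_{p+1}^{(p+1)/2}\geq S_{p+1}^{(p+1)/2}$ since $\mu\leq1$; substituting gives the claimed constant.

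For the delicate range $p\in(2,3)$ the coefficient $\tfrac14-\tfrac1{p+1}$ is negative, so the Poisson term can no longer simply be thrown away — this is the main obstacle. The idea is to use the extra hypothesis $k\rho(x)\leq(x,\nabla\rho)$ to control $\int\rho\phi_u u^2$ by other quantities via the Pohozaev identity. Writing $\mathcal K:=\int|\nabla u|^2$, $\mathcal M:=\int u^2$, $\mathcal D:=\int\rho\phi_u u^2$, $\mathcal P:=\int u^{p+1}$, Nehari reads $\mathcal K+\mathcal M+\mathcal D=\mu\mathcal P$ and Pohozaev \eqref{pohozaev} reads
\[
\tfrac12\mathcal K+\tfrac32\mathcal M+\tfrac54\mathcal D+\tfrac{\mu}{2}\!\int_{\R^3}\phi_u u^2(x,\nabla\rho)-\tfrac{3\mu}{p+1}\mathcal P=0.
\]
Using $\int\phi_u u^2(x,\nabla\rho)\geq k\int\rho\phi_u u^2=k\mathcal D/\!$ (carefully: $\phi_u u^2\geq0$ so $(x,\nabla\rho)\geq k\rho$ pointwise gives this with the right sign, but note the Poisson term here uses bare $\rho$ not $\mu$; I would double-check the coefficient bookkeeping between \eqref{SP one equation perturbed} and \eqref{poh system}, which suggests $c=1$), one gets the inequality $\tfrac12\mathcal K+\tfrac32\mathcal M+\tfrac54\mathcal D+\tfrac{k}{2}\mathcal D-\tfrac{3\mu}{p+1}\mathcal P\leq0$. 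One then eliminates $\mathcal P$ between this and $\mu\mathcal P=\mathcal K+\mathcal M+\mathcal D$, producing an inequality of the form $a_1\mathcal K+a_2\mathcal M+a_3(k,p)\mathcal D\leq0$ with $a_1,a_2>0$; solving for $\mathcal D$ yields a bound $\mathcal D\leq \lambda(k,p)\,\|u\|_{H^1}^2$ for an explicit $\lambda(k,p)$, valid exactly when $k>\frac{-2(p-2)}{p-1}$ (this is where that threshold on $k$ enters, ensuring the relevant combination is positive and the denominator $(3+2k)$-type factor has the right sign).

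Having such a bound, I would then write $I_\mu(u)=\tfrac12\|u\|_{H^1}^2+\tfrac14\mathcal D-\tfrac{\mu}{p+1}\mathcal P$ and again use Nehari to replace $\mathcal P$, getting $I_\mu(u)=\frac{p-1}{2(p+1)}\|u\|_{H^1}^2-\frac{3-p}{4(p+1)}\mathcal D$; inserting $\mathcal D\leq\lambda(k,p)\|u\|_{H^1}^2$ gives $I_\mu(u)\geq\big(\frac{p-1}{2(p+1)}-\frac{3-p}{4(p+1)}\lambda(k,p)\big)\|u\|_{H^1}^2$, and one checks that the bracketed coefficient equals (or is bounded below by) $\frac{2(p-2)+k(p-1)}{(3+2k)(p+1)}$ after simplification. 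The lower bound on $\|u\|_{H^1}^2$ is obtained exactly as in the $p\geq3$ case (Nehari plus Sobolev plus $\mathcal D\geq0$ and $\mu\leq1$), namely $\|u\|_{H^1}^2\geq S_{p+1}^{(p+1)/(p-1)}$, and multiplying the two estimates yields $C(k,p)$. The routine part is the algebra identifying the constant; the conceptually essential step is the Pohozaev-plus-Nehari elimination that turns the sign-bad Poisson term into something controlled by $\|u\|_{H^1}^2$, and verifying that the constraint on $k$ is exactly what keeps every coefficient on the favourable side.
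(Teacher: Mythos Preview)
Your approach is essentially the same as the paper's: both cases use the Nehari identity, the Sobolev lower bound $\|u\|_{H^1}^2\geq S_{p+1}^{(p+1)/(p-1)}$, and (for $p\in(2,3)$) the Pohozaev identity together with $k\rho\leq(x,\nabla\rho)$. Your bookkeeping instinct is right: in \eqref{poh system} one has $b=c=1$, $d=\mu$, so the Pohozaev term is $\tfrac12\int\phi_u u^2(x,\nabla\rho)$, not $\tfrac{\mu}{2}$.

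There is, however, one concrete slip in your sketch for $p\in(2,3)$. After eliminating $\mu\mathcal P=\mathcal K+\mathcal M+\mathcal D$ from the Pohozaev inequality, the coefficient of $\mathcal K$ is $\tfrac12-\tfrac{3}{p+1}$, which is \emph{negative} for every $p<5$; so your claim ``$a_1,a_2>0$'' fails, and you cannot read off $\mathcal D\leq\lambda\|u\|_{H^1}^2$ directly from an inequality with all three coefficients positive. The paper fixes this by first weakening Pohozaev via $\tfrac32\mathcal M\geq\tfrac12\mathcal M$ to obtain
\[
\tfrac12\|u\|_{H^1}^2+\tfrac{5+2k}{4}\,\mathcal D-\tfrac{3\mu}{p+1}\,\mathcal P\leq 0,
\]
and then works with the three unknowns $\alpha=\|u\|_{H^1}^2$, $\gamma=\mathcal D$, $\delta=\mu\mathcal P$ and the three relations (energy $=\bar c$, Nehari, the inequality above). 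Rather than bounding $\gamma$ by $\alpha$, the paper solves directly for $\delta\leq \frac{(3+2k)(p+1)}{2(p-2)+k(p-1)}\,\bar c$ and then uses $\alpha=\delta-\gamma\leq\delta$. This yields exactly $C(k,p)$. Your route (bound $\gamma$ by $\alpha$, then substitute into $I_\mu=\frac{p-1}{2(p+1)}\alpha-\frac{3-p}{4(p+1)}\gamma$) also works once you make that same collapsing step, and in fact produces a constant at least as large as $C(k,p)$; but as written, the sign claim is the gap you would need to patch.
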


\begin{proof}
Let $\bar{u}\in H^1(\R^3)\setminus \{0\}$ be an arbitrary nonnegative solution of \eqref{SP one equation perturbed} such that $I_{\mu}(\bar{u})=\bar{c}$. Using the Sobolev embedding theorem and the fact that $I_{\mu}'(\bar{u})(\bar{u})=0$, we see that

\[S_{p+1}||\bar{u}||_{L^{p+1}}^2 \leq ||\bar{u}||_{H^{1}}^2 \leq ||\bar{u}||_{H^{1}}^2 + \int_{\R^3} \rho\phi_{\bar{u}} \bar{u}^2 = \mu ||\bar{u}||_{L^{p+1}}^{p+1}.\]\\
\noindent Since $\mu\leq 1$ it follows that

\begin{equation}\label{lower bound nontriv sols}
\left(S_{p+1}\right)^{\frac{p+1}{p-1}}\leq ||\bar{u}||_{H^{1}}^2.
\end{equation}\\
\noindent If $p\in[3,5)$, using the definition of $\bar{c}$ and Nehari's condition, we can see that 

$$\Big(\frac{1}{2}-\frac{1}{p+1}\Big)||\bar{u}||_{H^{1}}^2 \leq\bar{c},$$\\
and so the bound on $\bar{c}$ immediately follows from (\ref{lower bound nontriv sols}). If $p\in(2,3)$, we first note that since $I_{\mu}(\bar{u})=\bar{c}$, $I_{\mu}'(\bar{u})(\bar{u})=0$, and $\bar{u}=(\bar{u})_+$, then $\bar{u}$ satisfies 

\begin{equation}\label{SOE1}
\frac{1}{2}\int_{\R^3}(|\nabla \bar{u}|^2 + \bar{u}^2)+\frac{1}{4}\int_{\R^3} \rho \phi_{\bar{u}} \bar{u}^2 -\frac{\mu}{p+1}\int_{\R^3}\bar{u}^{p+1} =\bar{c},
\end{equation}\\
\noindent and\\
\begin{equation}\label{SOE2}
\int_{\R^3}(|\nabla \bar{u}|^2 + \bar{u}^2)+\int_{\R^3}\rho \phi_{\bar{u}} \bar{u}^2  -\mu\int_{\R^3}\bar{u}^{p+1} =0.
\end{equation}\\
\noindent Moreover, since $\bar{u}$ solves \eqref{SP one equation perturbed} then, by Lemma \ref{pohozaevlemma}, $\bar{u}$ must also satisfy the Pohozaev equality:

\begin{equation*}
\frac{1}{2}\int_{\R^3} |\nabla \bar{u}|^2 +\frac{3}{2}\int_{\R^3}   \bar{u}^2 +\frac{5}{4}\int_{\R^3}\rho \phi_{\bar{u}} \bar{u}^2  + \frac{1}{2}\int_{\R^3}  \phi_{\bar{u}}\bar{u}^2(x,\nabla \rho) -\frac{3\mu}{p+1}\int_{\R^3} \bar{u}^{p+1}=0.\\
\end{equation*}\\
\noindent We now recall that, by assumption, $k\rho(x)\leq (x, \nabla \rho)$ for some $k>\frac{-2(p-2)}{(p-1)}$. Using this in the above equality, we see that

\begin{equation}\label{SOE3}
\frac{1}{2}\int_{\R^3} (|\nabla \bar{u}|^2+ \bar{u}^2) +\left(\frac{5+2k}{4}\right)\int_{\R^3}\rho \phi_{\bar{u}} \bar{u}^2  -\frac{3\mu}{p+1}\int_{\R^3} \bar{u}^{p+1}\leq 0.
\end{equation}\\
\noindent For ease, we now set $\alpha=||\bar{u}||_{H^{1}}^2$, $\gamma=\int_{\R^3} \rho \phi_{\bar{u}} \bar{u}^2 $, and $\delta=\mu \int_{\R^3}\bar{u}^{p+1}$. From \eqref{SOE1}, \eqref{SOE2}, and \eqref{SOE3}, we can see that $\alpha$, $\gamma$, and $\delta$ satisfy

\begin{equation*}
\left\{
\begin{array}{ccccccc}
  \frac{1}{2}\alpha&+&\frac{1}{4}\gamma &-&\frac{1}{p+1}\delta &=&\bar{c},   \\
  \alpha&+&\gamma &-&\delta &=&0, \\
  \frac{1}{2}\alpha &+&\left(\frac{5+2k}{4}\right)\gamma &-&\frac{3}{p+1}\delta &\leq&0,
\end{array}
\right.
\end{equation*}\\
\noindent and so, it holds that

\[\delta \leq \frac{\bar{c} (3+2k)(p+1)}{2(p-2)+k(p-1)},\]\\
and

\[\alpha = \delta -\gamma.\]\\
Since $\gamma$ is nonnegative, we find 

\[\alpha \leq \alpha + \gamma =\delta \leq \frac{\bar{c} (3+2k)(p+1)}{2(p-2)+k(p-1)}.\]\\
\noindent This and (\ref{lower bound nontriv sols}) implies the statement, since $k>\frac{-2(p-2)}{(p-1)}>\frac{-3}{2}$ for $p\in(2,3).$ This concludes the proof.
\end{proof}

\medskip

\begin{lemma} \label{lower bound nontriv sols infinity}
If $p\in(2,5)$, $\mu \in [\frac{1}{2},1]$ and $u\in H^1(\R^3) \setminus \{0\}$ is a nonnegative solution of \eqref{SP one equation perturbed infinity}, then, it holds that

 \[I_{\mu}^{\infty}(u) \geq c_{\mu}^{\infty}>0.\] \\
Moreover, if $p\in(2,5)$ and $u\in H^1(\R^3) \setminus \{0\}$ is a nonnegative solution of \eqref{SP infinity}, then

 \[I^{\infty}(u) \geq c^{\infty}>0.\] \\
In both cases, $u>0.$
\end{lemma}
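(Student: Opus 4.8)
The plan is to mimic, in the translation-invariant setting, exactly the argument already used for Lemma 3.4, replacing the mountain-pass constant $S_{p+1}$-type estimate there by the sharper observation that a nonzero solution \emph{itself} lies above the min-max level $c_\mu^\infty$. First I would fix a nonnegative nontrivial solution $u\in H^1(\R^3)\setminus\{0\}$ of \eqref{SP one equation perturbed infinity}; by Proposition \ref{reg} (applied with $b=1$, $c=\rho_\infty$, $d=\mu$, and $\rho$ replaced by the constant $\rho_\infty$) we get $u\in C^{1,\alpha}_{\mathrm{loc}}$ and, since $u\not\equiv 0$ and $u\ge 0$, the strong maximum principle gives $u>0$ everywhere, which disposes of the final assertion in both cases. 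The same regularity/positivity statement applies verbatim to a nonnegative nontrivial solution of \eqref{SP infinity} (take $\mu=1$).

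The heart of the matter is to show $I_\mu^\infty(u)\ge c_\mu^\infty$. The natural route is a fibering/scaling argument: consider the path $t\mapsto \gamma(t)$ obtained by suitably scaling $u$ (for instance $\gamma(t)(x)=u(x/t)$ for $t>0$, extended continuously to $t=0$ by $0$), which is admissible for the class $\Gamma^\infty$ once one checks $I_{1/2}^\infty(\gamma(1))<0$ after a final reparametrisation — here one uses that along $v_t(x)=t^2u(tx)$ (or a similar scaling) the leading term as $t\to+\infty$ is the $-\frac{\mu t^{2p-1}}{p+1}\int u_+^{p+1}$ term since $2p-1>3$, exactly as in the proof of Lemma \ref{claim1}(ii). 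Then $c_\mu^\infty\le \max_{t}I_\mu^\infty(\gamma(t))$, and the remaining point is that this maximum is attained at the value of the parameter corresponding to $u$ itself, i.e.\ that the map $t\mapsto I_\mu^\infty(v_t)$ has its maximum exactly at the point where $v_t$ solves the equation. This is where one invokes that $u$ is a critical point: differentiating $t\mapsto I_\mu^\infty(v_t)$ and using both the Nehari identity $(I_\mu^\infty)'(u)(u)=0$ and the Pohozaev identity \eqref{pohozaev} (with $b=1$, $c=\rho_\infty$, $d=\mu$, $\nabla\rho_\infty\equiv 0$) one checks that $t=1$ is the unique critical point of this one-variable function and that it is a maximum, so that $I_\mu^\infty(u)=\max_t I_\mu^\infty(v_t)\ge c_\mu^\infty$. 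The positivity $c_\mu^\infty>0$ is already built into the mountain-pass geometry for $I_\mu^\infty$ (the analogue of Lemma \ref{claim1}), which the text has noted holds by the same arguments; concretely $c_\mu^\infty\ge a>0$ from the strict local minimum at the origin.

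For the second statement, about solutions of \eqref{SP infinity} and the level $c^\infty$, I would split into the two ranges. For $p\in[3,5)$ the argument is softer: $c^\infty=\inf_{\bar\Gamma^\infty}\max I^\infty(\gamma(t))$ coincides with the infimum of $I^\infty$ over its Nehari manifold (standard for $p\ge 3$, where $t\mapsto I^\infty(tu)$ has a unique positive maximum), and every nonzero solution lies on that manifold, so $I^\infty(u)\ge c^\infty$ directly; strict positivity follows as in Lemma \ref{lower bound energy nontriv sols}, giving $c^\infty\ge \frac{p-1}{2(p+1)}(S_{p+1})^{(p+1)/(p-1)}>0$. For $p\in(2,3)$ one has $c^\infty=c_1^\infty$ by definition, and a nonnegative nontrivial solution of \eqref{SP infinity} is precisely a solution of \eqref{SP one equation perturbed infinity} with $\mu=1$, so the first part of the lemma with $\mu=1$ gives $I^\infty(u)=I_1^\infty(u)\ge c_1^\infty=c^\infty>0$.

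The step I expect to be the main obstacle is verifying that $t=1$ genuinely maximises $t\mapsto I_\mu^\infty(v_t)$ over $t\in(0,+\infty)$ — i.e.\ that the scaled path through a critical point actually realises the min-max level rather than merely bounding it from one side. This requires combining the Nehari and Pohozaev identities to pin down the sign of the derivative of this one-parameter energy on both sides of $t=1$; the nonlocal Coulomb term scales with a different power than the kinetic and potential terms, so one must check that the resulting cubic-type expression in the scaling parameter has the right monotonicity. If the naive scaling $v_t(x)=t^2u(tx)$ does not yield a clean single critical point, the fallback is the two-parameter scaling $v_{s,t}(x)=s\,u(x/t)$ used in Ruiz's work \cite{RuizJFA}, optimising over $s$ and $t$ jointly; the Pohozaev identity guarantees the solution sits at the unique critical point of this two-parameter family, which is exactly the mountain-pass saddle.
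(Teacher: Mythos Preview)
Your proposal is correct and matches the approach the paper has in mind: the paper's proof simply cites Proposition~3.4 in \cite{Ianni and Ruiz}, whose argument is precisely the fibering path $v_t(x)=t^2u(tx)$ combined with the Nehari and Pohozaev identities to show that $t\mapsto I_\mu^\infty(v_t)$ has its unique maximum at $t=1$. Your anticipated ``main obstacle'' --- that $t=1$ genuinely maximises the one-parameter energy --- is exactly Lemma~3.3 in \cite{RuizJFA}, which the paper itself invokes later in the proof of Theorem~\ref{general theorem} (see the discussion around \eqref{energy level equal}); so your concern is well-placed but already resolved by the cited literature, and your fallback to the two-parameter Ruiz scaling is unnecessary here.
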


\begin{proof}
The lower bounds follow easily by similar arguments to those used in the proof of Proposition 3.4 in \cite{Ianni and Ruiz}. Since $u$ is nonnegative and nontrivial, then it is strictly positive by the strong maximum principle, and this concludes the proof. 
\end{proof}

\section{Existence: the case of coercive $\rho(x)$}\label{coercive section}

In this section we will examine the existence of solutions of \eqref{main SP system} in the case of a coercive potential $\rho$, namely $\rho(x)\to+\infty$ as $|x|\to+\infty$. In the following lemma, we establish that this coercivity is indeed a sufficient condition for the compactness of the embedding $E\hookrightarrow L^{p+1}(\R^3)$.\\

\begin{lemma}\label{compact embedding}
Assume $\rho(x) \to +\infty$ as $|x| \to +\infty$. Then, $E$ is compactly embedded in $L^{p+1}(\R^3)$ for all $p \in(1,5)$.\\
\end{lemma}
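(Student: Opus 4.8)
The plan is to show compactness of the embedding $E \hookrightarrow L^{p+1}(\R^3)$ by the standard device of splitting $\R^3$ into a large ball and its complement, controlling the ball part with the classical Rellich--Kondrachov theorem and the exterior part with the coercivity of $\rho$. Concretely, let $(u_n)_{n\in\N} \subset E$ be a sequence with $u_n \rightharpoonup u$ weakly in $E$; recall from the discussion in Section~\ref{regularity} that weak convergence in $E$ is equivalent to boundedness together with convergence in $L^1_{\mathrm{loc}}(\R^3)$, so in particular $\sup_n \|u_n\|_E < +\infty$, hence $\sup_n \|u_n\|_{H^1} < +\infty$. First I would fix $\varepsilon > 0$ and, using $\rho(x)\to+\infty$, choose $R>0$ so large that $\rho(x) \geq M$ on $\R^3 \setminus B_R$ for a value $M$ to be chosen. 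On the exterior region the key estimate is an interpolation: for $2 < p+1 < 6$ write $p+1 = 2\theta + 6(1-\theta)$ and bound $\|u_n\|_{L^{p+1}(\R^3\setminus B_R)}^{p+1} \leq \|u_n\|_{L^2(\R^3\setminus B_R)}^{2\theta \cdot \frac{p+1}{\dots}} \|u_n\|_{L^6(\R^3\setminus B_R)}^{\dots}$, so that since $\|u_n\|_{L^6} \lesssim \|u_n\|_{D^{1,2}} \lesssim \|u_n\|_E$ is uniformly bounded, it suffices to make $\|u_n\|_{L^2(\R^3\setminus B_R)}$ small. But on $\R^3 \setminus B_R$ we have $\int_{\R^3\setminus B_R} u_n^2 \leq \frac{1}{M}\int_{\R^3\setminus B_R}\rho\, u_n^2$, and $\int_{\R^3}\rho\, u_n^2$ is dominated by a power of the Coulomb term in the norm (indeed $\int \rho u_n^2 = \int \sqrt{\rho}\,u_n \cdot \sqrt{\rho}\, u_n$ and one controls it via the double-integral term using Hardy--Littlewood--Sobolev or a direct estimate), hence uniformly bounded; so choosing $M$ large makes the exterior $L^{p+1}$ norm uniformly small.

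Next I would handle the interior: on the bounded set $B_R$, since $(u_n)$ is bounded in $H^1(B_R)$, the Rellich--Kondrachov theorem gives, up to a subsequence, strong convergence in $L^{p+1}(B_R)$ for every $p+1 \in [1,6)$, in particular to the weak limit $u$. Combining the two pieces: for any $\varepsilon$, pick $R$ so the exterior contribution of each $u_n$ (and of $u$, which also lies in $E$, hence satisfies the same exterior smallness) is below $\varepsilon$, and then use strong $L^{p+1}(B_R)$ convergence on the interior; a diagonal argument over $\varepsilon \to 0$ (or just taking $\varepsilon$ arbitrary) yields $u_n \to u$ strongly in $L^{p+1}(\R^3)$. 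This is the usual ``no mass escapes to infinity'' scheme.

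The one point deserving care — and what I expect to be the main technical obstacle — is the uniform bound on $\int_{\R^3}\rho\, u_n^2$ in terms of $\|u_n\|_E$. The norm on $E$ only contains the \emph{square root} of the double Coulomb integral, and a priori $\int \rho u_n^2$ is not one of the two terms defining $\|u_n\|_E$; one must bound it. The cleanest route is: by Hardy--Littlewood--Sobolev (see \eqref{HLS intro}), $\int\int \frac{\rho u^2 \rho u^2}{|x-y|} \gtrsim \|\rho u^2\|_{L^{6/5}}^2$ fails in the wrong direction, so instead I would argue via the representation $\int \rho\phi_{u_n} u_n^2 = \|\nabla \phi_{u_n}\|_{L^2}^2$ and then estimate $\int_{B_R}\rho u_n^2$ directly using $\rho \in C(\R^3)$ (hence bounded on $B_R$) plus the $H^1$ bound, while on the exterior the inequality $\int_{\R^3\setminus B_R}\rho u_n^2 \le \big(\int \rho\phi_{u_n}u_n^2\big)^{1/2}\big(\int \rho\, u_n^2/\phi_{u_n}\big)^{1/2}$-type manipulations or, more simply, a pointwise lower bound $\phi_{u_n}(x) \geq c/(1+|x|)$ on a region (since $\rho u_n^2$ has positive mass) let one absorb $\rho u_n^2$ into $\rho\phi_{u_n}u_n^2 \cdot (1+|x|)$. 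Alternatively — and this is probably the approach the authors intend — one simply observes that the exterior $L^2$ smallness can be obtained without bounding $\int \rho u_n^2$ at all: split $\rho \geq M$ and note $\int_{\{\rho \geq M\}} u_n^2 \leq M^{-1}\int \rho u_n^2$, but if that quantity is troublesome, instead use that $\rho(x) \to \infty$ together with the fact that $\|u_n\|_{L^6}$ is bounded and $\|u_n\|_{L^2}$ is bounded, via a truncation/Chebyshev argument on the superlevel sets of $\rho$. In any case the structural idea is robust; the write-up just needs one honest line reconciling the square-root in the $E$-norm with the linear appearance of $\rho u_n^2$, and this is where I would concentrate the effort.
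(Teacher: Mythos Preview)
Your overall architecture (split $\R^3$ into a ball and its complement, Rellich inside, coercivity of $\rho$ outside, then interpolate) is exactly what the paper does. The gap is precisely the point you yourself flag: there is no obvious way to bound $\int_{\R^3}\rho\,u_n^2$ uniformly in terms of $\|u_n\|_E$, and none of your three workarounds succeeds. HLS goes the wrong way, as you note; the pointwise lower bound $\phi_{u_n}(x)\gtrsim (1+|x|)^{-1}$ is not uniform in $n$ without already knowing that the mass of $\rho u_n^2$ does not slide off to infinity; and a Chebyshev/truncation argument on superlevel sets of $\rho$ gives nothing, since boundedness of $\|u_n\|_{L^2}$ alone cannot prevent $L^2$-mass from escaping --- that is exactly the lack of compactness of $H^1(\R^3)\hookrightarrow L^2(\R^3)$. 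So as written the exterior smallness step does not close.

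The paper sidesteps the issue by controlling a \emph{cubic} weighted integral instead of a quadratic one. Test the Poisson equation $-\Delta\phi_u=\rho u^2$ against $|u|$ and use Cauchy--Schwarz:
\[
\int_{\R^3}\rho\,|u|^3 = \int_{\R^3}\nabla|u|\cdot\nabla\phi_u \le \|\nabla u\|_{L^2}\,\|\nabla\phi_u\|_{L^2} \le C\|u\|_E^{3}.
\]
This gives a continuous embedding $E\hookrightarrow L^3_\rho(\R^3)$, with the weight appearing to the first power against $|u|^3$ rather than $u^2$. The exterior estimate is then immediate: $\int_{\R^3\setminus B_R}|u_n|^3 \le \big(\inf_{|x|>R}\rho\big)^{-1}\int_{\R^3}\rho|u_n|^3 \to 0$ as $R\to\infty$, uniformly in $n$. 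Combined with Rellich on $B_R$ this yields $u_n\to u$ in $L^3(\R^3)$, and the full range $p\in(1,5)$ follows by interpolating $L^{p+1}$ between $L^2$ and $L^3$ (for $p<2$) or between $L^3$ and $L^6$ (for $p>2$), both anchors being uniformly bounded. The cubic exponent is the ``honest line'' you were looking for; it is forced by the structure of the Poisson equation, not by the quadratic $H^1$ part of the norm.
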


\begin{proof}We first recall that for any $u\in E$, it holds that 

\[ -\Delta \phi_u =\rho u^2,\]\\
where $\phi_u (x) \coloneqq \int_{\R^3} \frac{\rho(y)u^2(y)}{\omega|x-y|}dy\in \mathcal D^{1,2}(\R^3)$.
Testing this equation with $u_+$ and $u_-$ and using the Cauchy-Schwarz inequality, it follows that

\begin{align*}
\int_{\R^3} \rho|u|^3 &= \int_{\R^3}\nabla|u|\nabla \phi_u\\
&\leq \left(\int_{\R^3}|\nabla|u||^2\right)^{\frac{1}{2}}\left(\int_{\R^3}|\nabla\phi_u|^2\right)^{\frac{1}{2}}\\
&= \left(\int_{\R^3}|\nabla u|^2\right)^{\frac{1}{2}}\left(\int_{\R^3}\int_{\R^3} \frac{u^2(x)\rho(x) u^2(y) \rho(y)}{4\pi|x-y|}\right)^{\frac{1}{2}}\\
&\leq \left(\frac{1}{4\pi}\right)^{\frac{1}{2}}||u||^3_E.\\
\end{align*}
\noindent Thus, if $\rho>0$, this implies the continuous embedding $E \hookrightarrow L^3_{\rho}(\R^3)$, where $L^3_{\rho}(\R^3) \coloneqq \{u: \rho^{\frac{1}{3}}u \in L^3(\R^3)\}$, equipped with norm $||u||_{L^3_{\rho}}\coloneqq || \rho^{\frac{1}{3}}u||_{L^3}$. \\

Without loss of generality, assume $u_n\rightharpoonup 0$ in E. Since $\rho(x) \to +\infty$ as $|x| \to +\infty$, then for any $\epsilon >0$, there exists an $R>0$ such that

\begin{equation}\label{L3ballcomplement}
 \int_{\R^3\setminus B_R} |u_n|^3 =  \int_{\R^3\setminus B_R}\frac{\rho}{\rho} |u_n|^3<\epsilon  \int_{\R^3 \setminus B_R}\rho |u_n|^3 < \epsilon C,\\
\end{equation}\\
for some $C>0$. This and the classical Rellich theorem implies that, passing if necessary to a subsequence,

\begin{equation}\label{unto0}
 \int_{\R^3} |u_n|^3\to 0.\\
\end{equation}\\
\noindent Therefore, we have proved the lemma for $p=2$. Now, if $p\in(1,2)$, then, by interpolation, for some $\alpha \in (0,1)$, it holds that 

\begin{equation*}
||u_n||_{L^{p+1}(\R^3)} \leq ||u_n||_{L^{2}(\R^3)}^{\alpha}||u_n||_{L^{3}(\R^3)}^{1-\alpha} \to 0,\\
\end{equation*}\\
\noindent as the $L^{2}(\R^3)$ norm is bounded. The case $p\in(2,5)$ is similar using Sobolev's inequality, and this concludes the proof. 
\end{proof}

\subsection{Proof of Theorem \ref{theorem existence coercive}} We are now in position to prove the existence of mountain pass solutions.

\begin{proof}[Proof of Theorem \ref{theorem existence coercive}]
We first note that by Lemma \ref{claim3}, the set $\mathcal{M}$, defined in \eqref{set of mu}, is nonempty.\\

\begin{claim} \label{claim4}
The values $c_{\mu}$ are critical levels of $I_{\mu}$ for all $\mu \in \mathcal{M}$. Namely, there exists ${u} \in E$ such that $I_{\mu}({u}) = c_{\mu}$ and $I_{\mu}'({u})= 0$.
\end{claim}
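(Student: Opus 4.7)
The plan is to exploit the standard concentration-compactness scheme, which becomes workable in this setting thanks to the compact embedding $E \hookrightarrow L^{p+1}(\R^3)$ provided by Lemma \ref{compact embedding}. Fix $\mu \in \mathcal{M}$ and, by Lemma \ref{claim3}$(ii)$, a bounded Palais-Smale sequence $(u_n)_{n\in\N} \subset E$ with $I_{\mu}(u_n) \to c_{\mu}$ and $I'_{\mu}(u_n) \to 0$ in $E^{-1}$. By reflexivity of $E$ and the characterisation of weak convergence recalled in Subsection 2.1, up to a subsequence there exists $u\in E$ such that $u_n \rightharpoonup u$ in $E$, $u_n \to u$ in $L^1_{\textrm{loc}}(\R^3)$ and a.e., and $\phi_{u_n}\rightharpoonup \phi_u$ in $D^{1,2}(\R^3)$. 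Lemma \ref{compact embedding} upgrades this to strong convergence $u_n \to u$ in $L^{p+1}(\R^3)$.

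Next I would verify that $u$ is a critical point of $I_{\mu}$. Given $\varphi \in C_c^{\infty}(\R^3)$, the $H^1$-pairings in $\langle I'_{\mu}(u_n),\varphi\rangle$ pass to the limit by weak convergence, and the superlinear term passes by $u_n \to u$ in $L^{p+1}$. For the Coulomb term $\int_{\R^3}\rho\phi_{u_n}u_n\varphi$, I restrict to the compact support of $\varphi$, where $\rho$ is bounded by continuity; Rellich's theorem applied on this support gives $u_n \to u$ in $L^2_{\textrm{loc}}$ and $\phi_{u_n}\to \phi_u$ in $L^q_{\textrm{loc}}$ for every $q<6$, which is amply enough to conclude. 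A standard density argument then yields $I'_{\mu}(u)=0$ on all of $E$.

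To identify $I_{\mu}(u)=c_{\mu}$ I plan to establish the stronger statement $u_n\to u$ in $E$. From $\langle I'_{\mu}(u_n),u_n\rangle = o(1)$ and $\langle I'_{\mu}(u),u\rangle=0$, combined with the classical Brezis-Lieb splitting of $\|u_n\|_{H^1}^2$ (via $u_n \rightharpoonup u$ in $H^1$), the fact that $\int_{\R^3}(u_n-u)_+^{p+1}\to 0$ (from strong $L^{p+1}$ convergence), and the nonlocal Brezis-Lieb lemma (Lemma \ref{nonlocalBL}) applied to the Coulomb energy, one obtains
\begin{equation*}
\|u_n-u\|_{H^1}^2 + \int_{\R^3}\rho\,\phi_{u_n-u}(u_n-u)^2 = o(1).
\end{equation*}
Both summands being nonnegative, this yields $u_n \to u$ in $H^1$ and $\phi_{u_n-u}\to 0$ in $D^{1,2}$, hence $u_n\to u$ in $E$. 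Consequently $I_{\mu}(u)=\lim I_{\mu}(u_n)=c_{\mu}$, and since $c_{\mu}>0$ (Lemma \ref{claim1}), $u\not\equiv 0$.

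The main subtlety lies in the Coulomb interaction: the compact embedding disposes of the nonlinearity easily, but the nonlocal term a priori supports concentration at infinity. This is exactly what the nonlocal Brezis-Lieb lemma rules out, and its combination with local Rellich compactness of $\phi_{u_n}$ is the crucial ingredient that makes the passage to the limit and the recovery of strong convergence in $E$ go through.
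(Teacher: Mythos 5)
Your proof is correct, and it takes a genuinely different route from the paper's. The paper establishes the strong convergence $u_n\to u$ in $E$ and the identity $I_\mu(u)=c_\mu$ by invoking the machinery of Lemmas 16 and 18 and Theorem 1 of \cite{Bonheure and Mercuri} (with $V\equiv 1$, $K\equiv\mu$) to show directly that $\int\rho\phi_{u_n}u_n^2\to\int\rho\phi_u u^2$ and that $\|u_n\|_{H^1}\to\|u\|_{H^1}$, with Lemma \ref{nonlocalBL} used only at the very last step to upgrade norm convergence to strong $E$-convergence. You instead first pass to the limit in $I'_\mu$ by localisation and Rellich compactness, then pit the Nehari identity for $u_n$ against that of $u$ and apply both the elementary quadratic splitting of $\|\cdot\|_{H^1}^2$ and the nonlocal Brezis--Lieb lemma (Lemma \ref{nonlocalBL}), arriving at
\[
\|u_n-u\|_{H^1}^2 + \int_{\R^3}\rho\,\phi_{u_n-u}(u_n-u)^2 = o(1),
\]
whose two nonnegative summands then force $u_n\to u$ in $E$. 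Your route is more self-contained in that it does not quote the Bonheure--Mercuri lemmas, trading them for the nonlocal Brezis--Lieb lemma and the Nehari identity; this is a perfectly standard and correct alternative. Two small points worth flagging for completeness: the passage $I'_\mu(u)(\varphi)=0$ for $\varphi\in C_c^\infty$ to $I'_\mu(u)=0$ on $E$ requires density of $C_c^\infty(\R^3)$ in $E$, which you should state (the paper also relies on it implicitly); and the weak convergence $u_n\rightharpoonup u$ in $H^1$ follows from boundedness in $E$, the continuous embedding $E\hookrightarrow H^1$, and the $L^1_{\mathrm{loc}}$ identification of the limit, which it would be cleaner to say explicitly.
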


By definition, for each $\mu \in \mathcal{M}$, there exists a bounded sequence $(u_n)_{n\in\N}\subset E$ such that $I_{\mu}(u_n) \to c_\mu$ and $I'_{\mu}(u_n)\to 0$. Since $(u_n)_{n\in\N}$ is bounded, there exists $u \in E$ such that, up to a subsequence, $u_n \rightharpoonup u$ in $E$. Using this and the fact that $E$ is compactly embedded in $L^{p+1}(\R^3)$ by Lemma \ref{compact embedding}, arguing as in Lemma 16 in \cite{Bonheure and Mercuri}, with $V(x)=1$ and $K(x)=\mu$, we see that for all $\delta>0$, there exists a ball $B\subset \R^3$ such that

\begin{equation}\label{BM1}
\limsup\limits_{n\to +\infty} \int_{\R^3 \setminus B} \rho \phi_{u_n} u_n^2 <\delta,
\end{equation}\\
and

\begin{equation}\label{BM2}
\limsup\limits_{n\to +\infty} \left| \int_{\R^3 \setminus B}\rho  \phi_{u_n} u_n u \right| <\delta.
\end{equation}\\
\noindent We then note that since $(u_n)_{n\in\N}$ is bounded in $E$, we also have that, up to a subsequence, $u_n \rightharpoonup u$ in $H^1$. Now, using this and the fact that $(u_n)_{n\in\N}$ is a bounded Palais Smale sequence for $I_{\mu}$, as well as \eqref{BM1}, \eqref{BM2}, and Lemma \ref{compact embedding}, we can reason as in Lemma 18 in \cite{Bonheure and Mercuri}, with $V(x)=1$ and $K(x)=\mu$, to see that 

\begin{equation}\label{H1 convergence}
\int_{\R^3}(|\nabla u_n|^2+ u_n^2) \to \int_{\R^3}(|\nabla u|^2+ u^2).
 \end{equation}\\
\noindent Thus, using \eqref{BM1} and the boundedness of $(u_n)_{n\in\N}$, we can argue as in the proof of Theorem 1 in \cite{Bonheure and Mercuri}, to see that

\begin{equation}\label{Coloumb convergence}
\int_{\R^3}\rho \phi_{u_n}u_n^2  \to \int_{\R^3}  \rho\phi_{u}u^2,
\end{equation}\\
\noindent which, when combined with \eqref{H1 convergence} and Lemma \ref{compact embedding}, implies that

\[I_{\mu}(u_n) \to I_{\mu}(u).\]\\
\noindent Therefore, we have shown

\[I_{\mu}(u)=c_{\mu}.\]\\
\noindent Moreover, by standard arguments, using the weak convergence $u_n \rightharpoonup u$ in $E$, we can show 

\[I'_{\mu}(u) =0.\]\\
\noindent We finally note that, by putting \eqref{H1 convergence} and \eqref{Coloumb convergence} together, we have that $||u_n||^2_{E} \to ||u||^2 _{E}$, and so by Lemma \ref{nonlocalBL}, it follows that $u_n \to u$ in $E$. This concludes the proof of Claim \ref{claim4}. \\

\begin{claim} \label{claim5}
Let $(\mu_n)_{n\in\N}$ be an increasing sequence in $\mathcal{M}$ such that $\mu_n \to 1$ and assume $(u_n)_{n\in\N} \subset E$ is such that $I_{\mu_n}(u_n)=c_{\mu_n}$ and $I'_{\mu_n}(u_n)=0$ for each $n$. Then, there exists ${u} \in E$ such that, up to a subsequence, $u_n \to {u}$ in $E$, $I(u)=c$, and $I'(u)=0$. 
\end{claim}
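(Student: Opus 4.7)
The overall strategy is to first show that $(u_n)_{n\in\N}$ is bounded in $E$, then extract a weakly convergent subsequence whose limit $u$ is shown to be a critical point of $I$ at level $c$, and finally upgrade the convergence to strong convergence in $E$ via the compact embedding of Lemma \ref{compact embedding} combined with the nonlocal Brezis-Lieb lemma.

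The main obstacle, and the heart of the argument, is the uniform boundedness of $(u_n)_{n\in\N}$ in $E$. This is exactly where the assumption $k\rho(x)\leq (x,\nabla\rho)$ with $k>\tfrac{-2(p-2)}{p-1}$ and the Pohozaev identity from Lemma \ref{pohozaevlemma} enter. I would mimic the computation at the end of the proof of Lemma \ref{lower bound energy nontriv sols}: set $\alpha_n=\|u_n\|_{H^1}^2$, $\gamma_n=\int_{\R^3}\rho\phi_{u_n}u_n^2$, $\delta_n=\mu_n\int_{\R^3}u_n^{p+1}$, and combine the identity $I_{\mu_n}(u_n)=c_{\mu_n}$, the Nehari identity $I_{\mu_n}'(u_n)(u_n)=0$, and Pohozaev (after using $(x,\nabla\rho)\geq k\rho$ and $\tfrac{3}{2}\!\int u_n^2\geq\tfrac{1}{2}\!\int u_n^2$) to obtain
\begin{equation*}
\delta_n \leq \frac{(3+2k)(p+1)}{2(p-2)+k(p-1)}\,c_{\mu_n},\qquad \alpha_n\leq\delta_n,\qquad \gamma_n\leq\delta_n.
\end{equation*}
Since $c_{\mu_n}\leq c_{1/2}$ by monotonicity of $\mu\mapsto c_\mu$ (Lemma \ref{claim3}), these uniform bounds give boundedness of $\|u_n\|_{H^1}$ and of $\int\rho\phi_{u_n}u_n^2$, hence of $\|u_n\|_{E}$.

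With boundedness in place, I would extract $u\in E$ with $u_n\rightharpoonup u$ in $E$ (and in $H^1$) along a subsequence. The compact embedding $E\hookrightarrow L^{p+1}(\R^3)$ from Lemma \ref{compact embedding} gives $u_n\to u$ strongly in $L^{p+1}(\R^3)$. Using this together with the boundedness of $(u_n)$, the two auxiliary facts (estimates \eqref{BM1} and \eqref{BM2} type) derived as in Lemma 16 of \cite{Bonheure and Mercuri} transfer verbatim, since they only use weak convergence in $E$ and strong convergence in $L^{p+1}$. Testing $I'_{\mu_n}(u_n)=0$ against $u_n$ and against $u$ and subtracting, and recalling $\mu_n\to 1$, I would follow the proof of Lemma 18 of \cite{Bonheure and Mercuri} to conclude that $\|u_n\|_{H^1}\to\|u\|_{H^1}$ and (as in Theorem 1 of \cite{Bonheure and Mercuri}) that $\int_{\R^3}\rho\phi_{u_n}u_n^2\to\int_{\R^3}\rho\phi_u u^2$. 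Combined with $\|u_n\|_E^2\to\|u\|_E^2$, the nonlocal Brezis-Lieb lemma (Lemma \ref{nonlocalBL}) and uniform convexity of $E$ then yield $u_n\to u$ strongly in $E$.

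It only remains to pass to the limit. For any $\varphi\in C_c^\infty(\R^3)$, $I'_{\mu_n}(u_n)(\varphi)=0$ together with strong convergence $u_n\to u$ in $E$, $\mu_n\to 1$, and dominated convergence on the nonlinear and nonlocal terms yield $I'(u)(\varphi)=0$, hence $I'(u)=0$ by density. For the energy level, $I_{\mu_n}(u_n)\to I(u)$ by the same strong convergence, while $c_{\mu_n}\to c_1=c$ by the left-continuity of $\mu\mapsto c_\mu$ from Lemma \ref{claim3}(i) (recall $c=c_1$ for $p\in(2,3)$ by \eqref{minmax level0}). Therefore $I(u)=c$, which concludes the proof.
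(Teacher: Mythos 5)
Your proposal is correct and follows essentially the same approach as the paper: Pohozaev plus Nehari together with the hypothesis $k\rho\leq(x,\nabla\rho)$ to bound $(u_n)$ in $E$, then the compact embedding of Lemma \ref{compact embedding}, the Bonheure--Mercuri convergence lemmas, and Lemma \ref{nonlocalBL} to upgrade to strong convergence, and finally left-continuity of $\mu\mapsto c_\mu$ to identify the level. The only step the paper makes explicit that you leave implicit is that $u_n\geq0$ (by testing $I'_{\mu_n}(u_n)=0$ against $(u_n)_-$), which is what justifies invoking Lemma \ref{pohozaevlemma} with nonlinearity $|u|^{p-1}u$ rather than $u_+^p$; this is covered by your reference to the computation in Lemma \ref{lower bound energy nontriv sols}.
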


We first note that testing the equation $I'_{\mu_n}(u_n)=0$ with $(u_n)_-$, one sees that $u_n\geq0$ for each $n$. Therefore, it holds that $u_n$ satisfies 

\begin{equation}\label{mu equation}
-\Delta u_n+  u_n + \rho (x) \phi_{u_n}  u_n = \mu_n u_n^{p},
\end{equation}

\begin{equation}\label{coerciveSOE1}
\frac{1}{2}\int_{\R^3}(|\nabla u_n|^2 + u_n^2)+\frac{1}{4}\int_{\R^3} \rho \phi_{u_n} u_n^2 -\frac{\mu_n}{p+1}\int_{\R^3}u_n^{p+1} =c_{\mu_n},
\end{equation}\\
\noindent and 

\begin{equation}\label{coerciveSOE2}
\int_{\R^3}(|\nabla u_n|^2 + u_n^2)+\int_{\R^3}\rho \phi_{u_n} u_n^2  -\mu_n\int_{\R^3}u_n^{p+1} =0.
\end{equation}\\
\noindent Moreover, since $u_n$ solves \eqref{mu equation} then, using Lemma \ref{pohozaevlemma} and the assumption $k\rho(x)\leq (x, \nabla \rho)$ for some $k>\frac{-2(p-2)}{(p-1)}$, and arguing as in Lemma \ref{lower bound energy nontriv sols}, we see that

\begin{equation}\label{coerciveSOE3}
\frac{1}{2}\int_{\R^3} (|\nabla u_n|^2+ u_n^2) +\left(\frac{5+2k}{4}\right)\int_{\R^3}\rho \phi_{u_n} u_n^2  -\frac{3\mu_n}{p+1}\int_{\R^3} u_n^{p+1}\leq 0.
\end{equation}\\
\noindent Setting $\alpha_n=\int_{\R^3} (|\nabla u_n|^2+  u_n^2)$, $\gamma_n=\int_{\R^3} \rho \phi_{u_n} u_n^2 $, and $\delta_n=\mu_n \int_{\R^3} u_n^{p+1}$, we can see, from \eqref{coerciveSOE1}, \eqref{coerciveSOE2}, and \eqref{coerciveSOE3}, that $\alpha_n$, $\gamma_n$, and $\delta_n$ satisfy

\begin{equation}\label{mainSOE}
\left\{
\begin{array}{ccccccc}
  \frac{1}{2}\alpha_n&+&\frac{1}{4}\gamma_n &-&\frac{1}{p+1}\delta_n &=&c_{\mu_n},   \\
  \alpha_n&+&\gamma_n &-&\delta_n &=&0, \\
  \frac{1}{2}\alpha_n &+&\left(\frac{5+2k}{4}\right)\gamma_n &-&\frac{3}{p+1}\delta_n &\leq&0.
\end{array}
\right.
\end{equation}\\

\noindent Solving the system, we find that

\[\delta_n \leq \frac{c_{\mu_n} (3+2k)(p+1)}{2(p-2)+k(p-1)},\]\\
\[\gamma_n \leq \frac{-2c_{\mu_n}(p-5)}{2(p-2)+k(p-1)},\]\\
and

\[\alpha_n = \delta_n -\gamma_n.\]\\
\noindent Since $c_{\mu_n}$ is bounded, $k>\frac{-2(p-2)}{(p-1)}>\frac{-3}{2}$, and $\delta_n$, $\gamma_n$, and $\alpha_n$ are all nonnegative, we can deduce that $\delta_n$, $\gamma_n$, and $\alpha_n$ are all bounded. Hence, the sequence $(u_n)_{n\in\N}$ is bounded in $E$ and so there exists ${u} \in E$ such that, up to a subsequence, $u_n \rightharpoonup {u}$ in $E$. 

We now follow a similar procedure to that of Claim 1. Using the facts that $I'_{\mu_n}(u_n)= 0$, $u_n$ is bounded in $E$, $E$ is compactly embedded in $L^{p+1}(\R^3)$ by Lemma \ref{compact embedding}, and $\mu_n \to 1$, by an easy argument similar to the proof of Lemma 16 in \cite{Bonheure and Mercuri}, with $V(x)=1$ and $K(x)=\mu_n$, we have that for all $\delta>0$, there exists a ball $B\subset \R^3$ such that

\begin{equation}\label{BM1.1}
\limsup\limits_{n\to +\infty} \int_{\R^3 \setminus B} \rho\phi_{u_n} u_n^2  <\delta,
\end{equation}\\
and
\begin{equation}\label{BM2.1}
\limsup\limits_{n\to +\infty} \left| \int_{\R^3 \setminus B}\rho \phi_{u_n} u_n u  \right| <\delta.
\end{equation}\\
\noindent Now, using the facts that $I'_{\mu_n}(u_n)=0$ and $\mu_n \to 1$, as well as \eqref{BM1.1}, \eqref{BM2.1}, and Lemma \ref{compact embedding}, we can adapt the proof of Lemma 18 in \cite{Bonheure and Mercuri}, with $V(x)=1$ and $K(x)=\mu_n$, to see that 

\begin{equation}\label{H1 convergence 1}
\int_{\R^3}(|\nabla u_n|^2+ u_n^2) \to \int_{\R^3}(|\nabla u|^2+ u^2).
 \end{equation}\\
\noindent Finally, using \eqref{BM1.1}, \eqref{H1 convergence 1}, the boundedness of $u_n$, Lemma \ref{compact embedding}, and the fact that $\mu_n \to 1$, we can easily adapt the proof of Theorem 1 in \cite{Bonheure and Mercuri}, to see that

\begin{equation}\label{Coloumb convergence 1}
\int_{\R^3} \rho \phi_{u_n}u_n^2  \to \int_{\R^3} \rho \phi_{u}u^2 ,
\end{equation}

\begin{equation}\label{finalconvergence}
c_{\mu_n}=I_{\mu_n}(u_n) \to I(u),
\end{equation}\\
\noindent and
\[0=I'_{\mu_n}(u_n) \to I'(u).\]\\

\noindent As in Claim 1, we see that \eqref{H1 convergence 1} and \eqref{Coloumb convergence 1} imply that $||u_n||^2_{E} \to ||u||^2 _{E}$, and so by Lemma \ref{nonlocalBL}, it follows that $u_n \to u$ in $E$. We now recall that, for $p\in(2,3)$, it holds that $c_{\mu_n} \to c$ as $ \mu_n \nearrow 1$ by definition \eqref{minmax level0}. Thus, from \eqref{finalconvergence} it follows that $I(u)=c$. \\

\noindent 
{\bf Conclusion.} \noindent
Let $(\mu_n)_{n\in\N}$ be an increasing sequence in $\mathcal{M}$ such that $\mu_n \to 1$. By Claim \ref{claim4}, we can choose $(u_n)_{n\in\N} \subset E$ such that $I_{\mu_n}(u_n)=c_{\mu_n}$ and $I'_{\mu_n}(u_n)=0$ for each $n$. By Claim \ref{claim5}, it follows that that there exists ${u} \in E$ such that, up to a subsequence, $u_n \to {u}$ in $E$, $I(u)=c$, and $I'(u)=0$. Namely, we have shown $(u, \phi_{u})\in E(\R^3) \times D^{1,2}(\R^3)$ is a solution of \eqref{main SP system}. By the strong maximum principle $\phi_u$ is strictly positive. Testing the equation $I'(u)=0$ with $u_-$ one sees that $u\geq 0$ and, in fact, strictly positive as a consequence of the strong maximum principle. This concludes the proof.
\end{proof}

\medskip

\subsection{Proof of Corollary \ref{theorem existence groundstate coercive}} In the next proof, we show the existence of least energy solutions.

\begin{proof}[Proof of Corollary \ref{theorem existence groundstate coercive}]
When $p>3$ it is standard to see that the Mountain Pass level $c$ has the following characterisation  

\begin{equation}\label{Nehari}
c=\inf_{u\in \mathcal N} I(u), \qquad \mathcal N=\{u\in E\setminus \{0\}\,\, |\,\, I'(u)u=0\},
\end{equation}\\
see e.g. Theorem 5 in \cite{Bonheure and Mercuri}. It follows that the mountain pass solution $u$ found in Theorem \ref{theorem existence coercive0} is a least energy solutions of $I$ in this case. If $p\in(2,3]$, define 

\[c^*\coloneqq \inf_{u\in \mathcal{A}} I(u),\text{ where }\mathcal{A}\coloneqq\{u\in E(\R^3)\setminus \{0\} : u \text{ is a nonnegative solution to } \eqref{SP one equation}\}.\]\\
When $p=3$, we notice that the mountain pass critical point, $u$, that we found in Theorem \ref{theorem existence coercive0} is such that $u\in\mathcal{A}$. Similarly, when $p\in(2,3)$, the mountain pass critical point that we found in Theorem \ref{theorem existence coercive} is in $\mathcal{A}$.  Therefore, in both cases, $\mathcal{A}$ is nonempty and $c^*$ is well-defined.  Now, let $(w_n)_{n\in\N}\subset \mathcal A$ be a minimising sequence for $I$ on $\mathcal{A},$ namely $I(w_n)\to c^*$ as $n\to+\infty$ and $I'(w_n)=0$. If $p=3$, it holds that\\ $$c+1\geq (p+1)I(w_n)-I'(w_n)w_n\geq \|w_n\|^2_{H^1(\R^3)},$$\\ and so it follows that $(w_n)_{n\in\N}$ is bounded. If $p\in(2,3)$, setting $\alpha_n=\int_{\R^3} (|\nabla w_n|^2+ w_n^2)$, $\gamma_n=\int_{\R^3} \rho\phi_{w_n} w_n^2 $, and $\delta_n= \int_{\R^3} w_n^{p+1}$, and arguing as in Theorem \ref{theorem existence coercive} Claim 2, we see that $\alpha_n$, $\gamma_n$, and $\delta_n$ satisfy the system \eqref{mainSOE} with $d_n:=I(w_n)$ in the place of $c_{\mu_n}$. Thus, solving this system and arguing as in Theorem \ref{theorem existence coercive} Claim 2, we can obtain that $\alpha_n$, $\gamma_n$, and $\delta_n$ are all bounded since $(d_n)_{n\in\N}$ is uniformly bounded. It follows that $(w_n)_{n\in\N}$ is also bounded in this case. Therefore, for all $p\in(2,3]$, there exists $w_0\in E$ such that, up to a subsequence, $w_n\rightharpoonup w_0$ in $E$. Arguing as in the proof of Theorem \ref{theorem existence coercive} Claim 1, we can show $w_n\to w_0$ in $E$, $I(w_0)=c^*$, and $I'(w_0)=0$. We note that by Lemma \ref{lower bound energy nontriv sols}, it holds that $c^* \geq C >0$ for some uniform constant $C>0$, and so $w_0$ is nontrivial. Finally, reasoning as in the conclusion of Theorem \ref{theorem existence coercive}, we see that both $w_0,\phi_{w_0}$ are positive, and this concludes the proof.
\end{proof}

\medskip

\section{Existence: the case of non-coercive $\rho(x)$} \label{non-coercive section} We now turn our attention to the problem of finding solutions when $\rho$ is non-coercive, namely when $\rho(x) \to \rho_{\infty}>0$ as $|x|\to +\infty$. In this setting, $E(\R^3)$ coincides with the larger space $H^1(\R^3)$, and so we look for solutions $(u,\phi_u)\in H^1(\R^3) \times D^{1,2}(\R^3)$ of \eqref{main SP system}. 

\subsection{Bounded PS sequences: proof of Proposition \ref{splitting theorem}}

\noindent Before moving forward, we will need some useful preliminary lemmas.\\

\begin{lemma}[\cite{Mercuri and Willem}]\label{MW 1}
Let $p\geq 0$ and $(u_n)_{n\in\N} \subset L^{p+1}(\R^3)$ be such that $u_n\to u$ almost everywhere on $\R^3$, $\sup_n||u_n||_{L^{p+1}}<+\infty$, and $(u_n)_-\to 0$ in $L^{p+1}(\R^3)$. Then, $u\in L^{p+1}(\R^3)$, $u\geq0$,

\[(u_n-u)_- \to 0  \qquad \textrm{in}\,\,     L^{p+1}(\R^3),\]
and 
\[||(u_n-u)_+||_{L^{p+1}}^{p+1}=||(u_n)_+||_{L^{p+1}}^{p+1}-||u_+||_{L^{p+1}}^{p+1}+o(1).\]\\
\end{lemma}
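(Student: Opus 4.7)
The plan is to establish the three conclusions in sequence, with the last (the Brezis--Lieb type splitting for $(\cdot)_+$) reduced to a combination of the classical Brezis--Lieb lemma for $|\cdot|^{p+1}$ applied to an auxiliary sequence, plus an a.e. continuity argument for the map $a\mapsto a_+$.

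First I would verify that $u\in L^{p+1}(\R^3)$ and $u\geq 0$. The integrability is immediate from Fatou's lemma together with the hypothesis $\sup_n\|u_n\|_{L^{p+1}}<+\infty$. The sign is obtained by noting that $(u_n)_-\to 0$ in $L^{p+1}$ forces, up to a subsequence, $(u_n)_-\to 0$ almost everywhere; since $u_n\to u$ a.e.\ and $(\cdot)_-$ is continuous, this gives $u_-\equiv 0$, i.e.\ $u\geq 0$.

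Next I would show $(u_n-u)_-\to 0$ in $L^{p+1}(\R^3)$. Writing $(u_n-u)_-=(u-u_n)_+$ and splitting the integration domain according to the sign of $u_n$, on $\{u_n\geq 0\}$ one has the pointwise bound $(u-u_n)_+\leq u$, so dominated convergence with dominating function $u^{p+1}\in L^1$ applies because $u_n\to u$ a.e.\ gives $(u-u_n)_+\to 0$ a.e. On $\{u_n<0\}$ one uses $(u-u_n)_+\leq u+(u_n)_-$ together with $(a+b)^{p+1}\leq 2^p(a^{p+1}+b^{p+1})$; the contribution $\int(u_n)_-^{p+1}$ tends to zero by hypothesis, while $\int_{\{u_n<0\}} u^{p+1}\to 0$ by dominated convergence, since $u\geq 0$ and a.e.\ convergence imply $\mathbf{1}_{\{u_n<0\}}\to 0$ a.e.\ on $\{u>0\}$, and the set $\{u=0\}$ contributes nothing.

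For the last identity, I would first reduce it to a statement about $(u_n)_+$. From $|(a)_+-(b)_+|\leq |a-b|$ with $a=u_n-u$ and $b=(u_n)_+-u$, one has $|(u_n-u)_+-((u_n)_+-u)_+|\leq (u_n)_-$ pointwise, so by the triangle inequality in $L^{p+1}$ and the boundedness of the sequences,
\begin{equation*}
\|(u_n-u)_+\|_{L^{p+1}}^{p+1}=\|((u_n)_+-u)_+\|_{L^{p+1}}^{p+1}+o(1).
\end{equation*}
Then I set $v_n:=(u_n)_+$, which satisfies $v_n\to u_+=u$ a.e.\ and is bounded in $L^{p+1}$. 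The classical Brezis--Lieb lemma gives $\|v_n\|_{L^{p+1}}^{p+1}-\|v_n-u_+\|_{L^{p+1}}^{p+1}\to\|u_+\|_{L^{p+1}}^{p+1}$. Splitting $|v_n-u_+|^{p+1}=(v_n-u_+)_+^{p+1}+(v_n-u_+)_-^{p+1}$ and observing that $(v_n-u_+)_-=(u-(u_n)_+)_+\leq u$ with $(u-(u_n)_+)_+\to 0$ a.e.\ (continuity of $(\cdot)_+$), dominated convergence yields $\|(v_n-u_+)_-\|_{L^{p+1}}\to 0$. Combining, $\|(v_n-u_+)_+\|_{L^{p+1}}^{p+1}=\|(u_n)_+\|_{L^{p+1}}^{p+1}-\|u_+\|_{L^{p+1}}^{p+1}+o(1)$, and plugging this back gives the claimed formula.

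The step I expect to be the most delicate is the reduction to $(u_n)_+$ via the Lipschitz estimate $|(a)_+-(b)_+|\leq |a-b|$, because it gives control only at the level of the $L^{p+1}$ norm, not of the $(p+1)$-th power directly; one needs the a priori boundedness of both $\|(u_n-u)_+\|_{L^{p+1}}$ and $\|((u_n)_+-u)_+\|_{L^{p+1}}$ to upgrade $o(1)$ at the level of norms to $o(1)$ at the level of $(p+1)$-th powers. Once this and the two dominated convergence applications are handled with care, the rest is a bookkeeping exercise combining the classical Brezis--Lieb lemma with the hypothesis $(u_n)_-\to 0$ in $L^{p+1}$.
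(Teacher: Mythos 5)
Your proof is correct and complete. The paper does not actually prove this lemma — it cites it from \cite{Mercuri and Willem} — so there is no in-paper argument to compare against; but your blind reconstruction is sound. In particular, the reduction step $|(u_n-u)_+ - ((u_n)_+ - u)_+| \leq (u_n)_-$, the upgrade from convergence of $L^{p+1}$ norms to convergence of their $(p+1)$-th powers using the a priori boundedness of both sequences (via uniform continuity of $t\mapsto t^{p+1}$ on bounded intervals), the application of the classical Brezis--Lieb lemma to $v_n=(u_n)_+$, and the two dominated-convergence arguments (for $\int_{\{u_n<0\}} u^{p+1}$ and for $\int (v_n-u_+)_-^{p+1}$, using the domination $(u-(u_n)_+)_+\leq u$) all check out. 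This is the natural strategy — splitting off the negative part and invoking Brezis--Lieb for the positive part — and matches the spirit of the cited reference.
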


\begin{lemma}\label{MW 2} 
Let $p>0$ and set 

\[F(u)=\frac{1}{p+1}\int_{\R^3}|u|^{p+1},\quad F_+(u)=\frac{1}{p+1}\int_{\R^3}u_+^{p+1}.\]\\
Assume $(u_n)_{n\in\N}\subset H^1(\R^3)$ is such that $u_n\to u$ almost everywhere on $\R^3$ and $\sup_n||u_n||_{H^1}<+\infty$. Then, it holds that

\[F'(u_n)-F'(u_n-u)-F'(u)=o(1),\qquad \textrm{in}\,\, H^{-1}(\R^3).\]\\
If, in addition, $(u_n)_-\to 0$ in $L^{p+1}(\R^3)$, then

\[F_+'(u_n)-F_+'(u_n-u)-F_+'(u)=o(1), \qquad \textrm{in}\,\, H^{-1}(\R^3). \]
\end{lemma}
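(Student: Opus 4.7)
The plan is to establish both identities by first showing that the relevant Nemytskii differences vanish in the dual Lebesgue space $L^{(p+1)/p}(\R^3)$, and then invoking the continuous embedding $L^{(p+1)/p}(\R^3)\hookrightarrow H^{-1}(\R^3)$, which follows by duality from the Sobolev embedding $H^1(\R^3)\hookrightarrow L^{p+1}(\R^3)$.

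For the first identity, I would set $g_n := |u_n|^{p-1}u_n - |u_n-u|^{p-1}(u_n-u) - |u|^{p-1}u$, observe that $g_n\to 0$ almost everywhere (because $u_n\to u$ a.e.), and combine this with the elementary Young-type inequality asserting that for every $\epsilon>0$ there exists $C_\epsilon>0$ such that
\[
\bigl\lvert |a+b|^{p-1}(a+b) - |a|^{p-1}a - |b|^{p-1}b \bigr\rvert \leq \epsilon |a|^p + C_\epsilon |b|^p, \qquad a,b\in\R.
\]
For $p\geq 1$ this follows from the mean value theorem applied to $t\mapsto |t|^{p-1}t$ combined with Young's inequality; for $p\in(0,1)$ it follows from the $p$-H\"{o}lder continuity of the same map. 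Setting $q:=(p+1)/p$ and applying this bound with $a=u_n-u$ and $b=u$ produces the pointwise control $|g_n|^q \leq C\epsilon^q |u_n-u|^{p+1} + CC_\epsilon^q |u|^{p+1}$. The classical Brezis-Lieb truncation trick then applies: the sequence $\bigl(|g_n|^q - C\epsilon^q|u_n-u|^{p+1}\bigr)_+$ is dominated by $CC_\epsilon^q|u|^{p+1}\in L^1(\R^3)$ and converges to zero almost everywhere, so dominated convergence yields
\[
\int_{\R^3}|g_n|^q \leq C\epsilon^q \, \|u_n-u\|^{p+1}_{L^{p+1}} + o(1).
\]
Since $\|u_n-u\|_{L^{p+1}}$ is uniformly bounded by the Sobolev embedding and the $H^1$-boundedness of $(u_n)_{n\in\N}$, and $\epsilon$ is arbitrary, we conclude $g_n\to 0$ in $L^{(p+1)/p}(\R^3)$, hence in $H^{-1}(\R^3)$.

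For the second identity, the key observation is the pointwise algebraic decomposition $v_+^p = |v|^{p-1}v + v_-^p$ valid for every $v\in\R$, which translates into $F_+'(v) = F'(v) + G'(v)$ with $G'(v)\varphi := \int_{\R^3} v_-^p\,\varphi$. Combined with the first identity, it only remains to prove $G'(u_n) - G'(u_n-u) - G'(u) \to 0$ in $H^{-1}$, and this is exactly where the extra hypothesis $(u_n)_-\to 0$ in $L^{p+1}$ enters: Lemma \ref{MW 1} yields $u\geq 0$ (so $G'(u)=0$) and $(u_n-u)_-\to 0$ in $L^{p+1}(\R^3)$. Consequently both $\|(u_n)_-^p\|_{L^{(p+1)/p}}$ and $\|(u_n-u)_-^p\|_{L^{(p+1)/p}}$ tend to zero, and the embedding $L^{(p+1)/p}\hookrightarrow H^{-1}$ finishes the argument.

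The main obstacle I anticipate is producing the elementary inequality above uniformly across the full admissible range of $p$, since the argument bifurcates between the superlinear regime $p\geq 1$ (mean value theorem plus Young) and the sublinear regime $p\in(0,1)$ (H\"{o}lder continuity of $t\mapsto |t|^{p-1}t$). Once this pointwise inequality and the dual Sobolev embedding $L^{(p+1)/p}\hookrightarrow H^{-1}$ are in hand, the rest is a routine application of the Brezis-Lieb truncation device together with the information on $(u_n)_-$ supplied by Lemma \ref{MW 1}.
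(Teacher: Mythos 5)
Your proof is correct and follows essentially the same route as the paper, which simply cites Lemma~3.2 of Mercuri--Willem together with Lemma~\ref{MW 1} and H\"older's inequality. What you have done, usefully, is unpack that reference: your inequality $\lvert |a+b|^{p-1}(a+b)-|a|^{p-1}a-|b|^{p-1}b\rvert\leq \epsilon|a|^p+C_\epsilon|b|^p$, combined with the Brezis--Lieb truncation device, is precisely the mechanism underlying that cited lemma, and your dual embedding $L^{(p+1)/p}(\R^3)\hookrightarrow H^{-1}(\R^3)$ is the ``H\"older'' step. The algebraic identity $v_+^p=|v|^{p-1}v+v_-^p$ is a clean way to reduce the second assertion to the first plus the vanishing of the negative-part contributions supplied by Lemma~\ref{MW 1}; the paper is laconic on this point but the mechanism is the same. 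Two minor remarks: the embedding $L^{(p+1)/p}\hookrightarrow H^{-1}$ implicitly needs $p\leq 5$ (so that $H^1\hookrightarrow L^{p+1}$ holds in $\R^3$), which is consistent with the range $p\in(2,5)$ in which the lemma is applied; and one should ensure $u\in L^{p+1}$ so that $C_\epsilon^q|u|^{p+1}$ is indeed an $L^1$ dominating function, which follows from Fatou together with the uniform $L^{p+1}$ bound on $(u_n)$.
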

\begin{proof}
The result follows as a consequence of Lemma 3.2 in \cite{Mercuri and Willem}, Lemma \ref{MW 1}, and H\"{o}lder's inequality.
\end{proof}

\medskip

The final preliminary result that we need is a splitting lemma for the nonlocal part of the derivative of the energy functional along bounded sequences. The proof follows by convexity estimates and Fatou's lemma, adapting similar arguments of Section $3$ in \cite{Mercuri and Squassina} and Lemma 4.2 in \cite{Degiovanni and Lancelotti} to a nonlocal context. \\

\begin{lemma}\label{dual convergence for splitting}[{\bf Nonlocal splitting lemma}] Assume $(u_n)_{n\in\N}\subset H^1(\R^3)$ is bounded and $u_n \to v_0$ almost everywhere. Suppose further $\rho \in C(\R^3)$ is nonnegative and $\rho(x) \to \rho_{\infty}\geq 0$ as $|x| \to +\infty$. Then, the following hold:\\
\begin{enumerate}[(i)]
\item $\rho\phi_{(u_n-v_0)}(u_n-v_0) - \rho_{\infty}\bar\phi_{(u_n-v_0)}(u_n-v_0) =o(1)$ in $H^{-1}(\R^3)$\\
\item $\rho \phi_{u_n} u_n -\rho \phi_{(u_n-v_0)} (u_n-v_0) - \rho \phi_{v_0} v_0 =o(1)$ in $H^{-1}(\R^3)$.\\
\end{enumerate}
\end{lemma}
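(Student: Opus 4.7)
The proof hinges on the decomposition $w_n := u_n - v_0$, which, by boundedness of $(u_n)$ in $H^1(\R^3)$ and a.e.\ convergence, satisfies $w_n\rightharpoonup 0$ weakly in $H^1$ and $w_n\to 0$ strongly in $L^p_{\mathrm{loc}}(\R^3)$ for every $p\in[1,6)$. Since $\rho$ is continuous with a finite limit at infinity, $\rho\in L^\infty(\R^3)$, and hence the HLS inequality \eqref{HLS intro} bounds $\phi_{w_n}$, $\bar\phi_{w_n}$, $\phi_{u_n}$, $\phi_{v_0}$ uniformly in $D^{1,2}(\R^3)$ (and hence in $L^6(\R^3)$). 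Both parts of the lemma reduce to proving uniform-in-$\varphi$ bounds after testing against $\varphi\in H^1$ with $\|\varphi\|_{H^1}\le 1$.

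For part $(i)$, the plan is to decompose
\begin{equation*}
\rho\phi_{w_n}-\rho_\infty\bar\phi_{w_n} \;=\; (\rho-\rho_\infty)\phi_{w_n}+\rho_\infty(\phi_{w_n}-\bar\phi_{w_n}),
\end{equation*}
and observe that $\phi_{w_n}(x)-\bar\phi_{w_n}(x) = (4\pi)^{-1}\int(\rho(y)-\rho_\infty)w_n^2(y)|x-y|^{-1}\dif y$. Given $\epsilon>0$, I would fix $R$ so that $|\rho-\rho_\infty|<\epsilon$ on $B_R^c$. The contributions from $B_R^c$ to both pieces are controlled by $C\epsilon$ via the HLS bounds above, while the contributions from $B_R$ vanish as $n\to\infty$ by compactness of $w_n$ in $L^p(B_R)$ for $p<6$, decoupled from $\varphi$ through H\"older and the Sobolev embedding $H^1\hookrightarrow L^3$.

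For part $(ii)$, linearity of $-\Delta$ in the source, together with the identity $\rho u_n^2=\rho w_n^2+2\rho w_n v_0+\rho v_0^2$, yields $\phi_{u_n}=\phi_{w_n}+\phi_{v_0}+\Psi_n$, where $\Psi_n\in D^{1,2}(\R^3)$ solves $-\Delta\Psi_n=2\rho w_n v_0$. Multiplying out $\phi_{u_n}u_n$ and cancelling,
\begin{equation*}
\rho\phi_{u_n}u_n-\rho\phi_{w_n}w_n-\rho\phi_{v_0}v_0 \;=\; \rho\phi_{w_n}v_0+\rho\phi_{v_0}w_n+\rho\Psi_n(w_n+v_0).
\end{equation*}
A crucial auxiliary step is that $\|\Psi_n\|_{D^{1,2}}\to 0$: by HLS, $\|\Psi_n\|_{D^{1,2}}\lesssim \|\rho w_n v_0\|_{L^{6/5}}$, and splitting the $L^{6/5}$-norm between a large ball $B_R$ (where $w_n\to 0$ in $L^2$) and its complement (where $\|v_0\|_{L^3}$ is small) gives this. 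An analogous localisation shows $\phi_{w_n}\to 0$ in $L^6_{\mathrm{loc}}(\R^3)$. Combined with the smallness of $v_0$ and $\phi_{v_0}$ at infinity, each of the four cross terms pairs with $\varphi$ and is bounded by $o(1)\|\varphi\|_{H^1}$ via H\"older and Sobolev.

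The principal obstacle is the uniformity in $\varphi$ over the $H^1$ unit ball, which is what distinguishes $H^{-1}$-convergence from weak-$\ast$ convergence. This is the point at which the convexity/Fatou scheme of \cite{Mercuri and Squassina} and \cite{Degiovanni and Lancelotti} enters, in its nonlocal form: convexity estimates for the quartic form $u\mapsto \iint \rho(x)u^2(x)\rho(y)u^2(y)(4\pi|x-y|)^{-1}\dif x\,\dif y$, together with Fatou's lemma and the nonlocal Brezis-Lieb identity of Lemma \ref{nonlocalBL}, upgrade the weak $L^{6/5}$ convergence of the sources $\rho w_n v_0$ and (after localisation) $\rho w_n^2$ to strong convergence in $D^{1,2}(\R^3)$ of the corresponding Poisson potentials. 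One further application of HLS then yields the required operator-norm bound in $H^{-1}(\R^3)$.
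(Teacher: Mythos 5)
Your part $(i)$ is essentially the paper's own argument: the same decomposition $(\rho-\rho_\infty)\phi_{w_n}+\rho_\infty(\phi_{w_n}-\bar\phi_{w_n})$, the same ball/complement localisation using $|\rho-\rho_\infty|<\epsilon$ outside a large ball, and the same H\"older/Sobolev/HLS chain. For part $(ii)$, however, you take a genuinely different route. The paper works directly with $F_n=|\phi_{u_n}u_n-\phi_{w_n}w_n-\phi_{v_0}v_0|^{3/2}$, builds a dominating sequence $G_n$ by iterating the $|a+b|^{3/2}$-convexity inequality, a pointwise Cauchy--Schwarz bound $|\phi_{u_n}-\phi_{w_n}|\le \phi_{2u_n-v_0}^{1/2}\phi_{v_0}^{1/2}$, and Young's inequality, and then deduces $\int F_n\to 0$ by applying Fatou to $G_n-F_n\ge 0$. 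You instead exploit linearity of the Poisson map, writing $\phi_{u_n}=\phi_{w_n}+\phi_{v_0}+\Psi_n$ with $-\Delta\Psi_n=2\rho w_nv_0$, multiplying out, and handling the three cross terms $\rho\phi_{w_n}v_0$, $\rho\phi_{v_0}w_n$, $\rho\Psi_n(w_n+v_0)$ one at a time via H\"older, Sobolev, HLS and a ball/complement splitting. This bilinear decomposition is more elementary and avoids the nonlinear Fatou step altogether; what the paper's scheme buys is that it treats the $L^{3/2}$-norm at once without tracking the bilinear structure of the convolution, at the cost of heavier pointwise estimates. Both approaches are correct.

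One caution on your closing paragraph. The claim that the convexity/Fatou/Brezis--Lieb machinery ``upgrades the weak $L^{6/5}$ convergence of $\rho w_n^2$ to strong convergence in $D^{1,2}$ of $\phi_{w_n}$'' cannot be right as stated: $\|\phi_{w_n}\|_{D^{1,2}}^2=\int\rho w_n^2\phi_{w_n}$ is in general bounded away from zero (think of a translating bump), and if it did tend to zero the splitting Proposition would be vacuous. Your decomposition does not actually need that. What it needs, and what is true, is $(a)$ $\Psi_n\to 0$ strongly in $D^{1,2}$, which is immediate from HLS once one shows $\rho w_nv_0\to 0$ strongly in $L^{6/5}$ (local Rellich on a ball plus tail-smallness of $v_0$, exactly as you describe), and $(b)$ $\phi_{w_n}\to 0$ in $L^q_{\mathrm{loc}}$ for $q<6$ (enough for the cross terms after a suitable H\"older split; the endpoint $q=6$ also holds but requires the extra observation that the far-field tail of the Newtonian potential is $O(1/R)$ in $L^\infty(B_R)$ together with pointwise convergence and dominated convergence). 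Neither $(a)$ nor $(b)$ involves Fatou or the nonlocal Brezis--Lieb lemma; once they are in hand, each cross term pairs against $\varphi$ with a bound $o(1)\|\varphi\|_{H^1}$, giving the $H^{-1}$ convergence with the required uniformity.
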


\begin{proof}
For the proof of $(i)$, we set 

\[\phi_u^*(x)\coloneqq \int_{\R^3} \frac{u^2(u)}{4\pi |x-y|} \dif y.\]\\
Take any $h\in H^1$, and note that 

\begin{align}\label{dual proof (i) 1}
\bigg|\int_{\R^3} \big(\rho\phi_{(u_n-v_0)}(u_n-v_0) - \rho_{\infty}\bar\phi_{(u_n-v_0)}(u_n-v_0)\big)h\bigg| &\leq \left|\int_{\R^3} (\rho-\rho_{\infty})\phi_{(u_n-v_0)}(u_n-v_0)h\right|\nonumber \\
& \quad+ \left|\int_{\R^3}  \rho_{\infty}(\phi_{(u_n-v_0)}-\bar\phi_{(u_n-v_0)})(u_n-v_0)h\right|  \nonumber\\
& =: I_1+ I_2.\\
\nonumber
\end{align}
Now, by assumption, for every $\epsilon>0$, there exists $R_{\epsilon}>0$ such that $|\rho-\rho_{\infty}|< \epsilon$ for all $|x|>R_{\epsilon}$. So, using H\"{o}lder's and Sobolev's inequalities, we can see that

\begin{align}\label{dual proof (i) 2}
I_1 &\leq \left|\int_{B_{R_{\epsilon}}} (\rho-\rho_{\infty})\phi_{(u_n-v_0)}(u_n-v_0)h\right|+\left|\int_{|x|>R_{\epsilon}} (\rho-\rho_{\infty})\phi_{(u_n-v_0)}(u_n-v_0)h\right| \nonumber\\
&\leq ||\rho||_{L^{\infty}}||\phi_{(u_n-v_0)}||_{L^6}||u_n-v_0||_{L^2(B_{R_{\epsilon}})}||h||_{L^3} + \epsilon ||\phi_{(u_n-v_0)}||_{L^6}||u_n-v_0||_{L^2}||h||_{L^3} \nonumber\\
&\lesssim (||\rho||_{L^{\infty}}||\nabla \phi_{(u_n-v_0)}||_{L^2}||u_n-v_0||_{L^2(B_{R_{\epsilon}})}+ \epsilon||\nabla \phi_{(u_n-v_0)}||_{L^2}||u_n-v_0||_{L^2})||h||_{H^1}. \\
\nonumber
\end{align}
Moreover, by using H\"{o}lder's and Sobolev's inequalities once again, we have 

\begin{align}\label{dual proof (i) 3}
I_2 &\leq \rho_{\infty}||\phi_{(u_n-v_0)}-\bar\phi_{(u_n-v_0)}||_{L^6}||u_n-v_0||_{L^2}||h||_{L^3}\nonumber \\
&\lesssim \rho_{\infty}||\phi_{(u_n-v_0)}-\bar\phi_{(u_n-v_0)}||_{L^6}||u_n-v_0||_{L^2}||h||_{H^1},\\
\nonumber
\end{align} 
and, by Minkowski's, Sobolev's, and Hardy-Littlewood-Sobolev inequalities, for every $\epsilon>0$, it holds

\begin{align}\label{dual proof (i) 4}
||\phi_{(u_n-v_0)}-\bar\phi_{(u_n-v_0)}||_{L^6} &=\left(\int_{\R^3} \left| \int_{\R^3} \frac{(\rho(y)-\rho_{\infty})(u_n-v_0)^2(y)}{4\pi|x-y|}\dif y\right|^6\dif x\right)^{\frac{1}{6}}\nonumber \\
&\leq \left(\int_{\R^3} \left( \int_{B_{R_{\epsilon}}} \frac{|\rho(y)-\rho_{\infty}|(u_n-v_0)^2(y)}{4\pi|x-y|}\dif y\right)^6\dif x\right)^{\frac{1}{6}}\nonumber\\
&\qquad\qquad\qquad\qquad+ \left(\int_{\R^3} \left( \int_{|x|>{R_{\epsilon}}} \frac{|\rho(y)-\rho_{\infty}|(u_n-v_0)^2(y)}{4\pi|x-y|}\dif y\right)^6\dif x\right)^{\frac{1}{6}} \nonumber\\
&\leq ||\rho||_{L^{\infty}}\left(\int_{\R^3} \left( \int_{\R^3} \frac{(u_n-v_0)^2(y)\chi_{B_{R_{\epsilon}}}^2(y)}{4\pi|x-y|}\dif y\right)^6\dif x\right)^{\frac{1}{6}}\nonumber\\
&\qquad\qquad\qquad\qquad+ \epsilon \left(\int_{\R^3} \left( \int_{\R^3} \frac{(u_n-v_0)^2(y)}{4\pi|x-y|}\dif y\right)^6\dif x\right)^{\frac{1}{6}}\nonumber\\
&= ||\rho||_{L^{\infty}}||\phi_{(u_n-v_0)\chi_{B_{R_{\epsilon}}}}^*||_{L^6} + \epsilon ||\phi_{(u_n-v_0)}^*||_{L^6}\nonumber\\
&\lesssim ||\rho||_{L^{\infty}}||\nabla\phi_{(u_n-v_0)\chi_{B_{R_{\epsilon}}}}^*||_{L^2} + \epsilon ||\nabla \phi_{(u_n-v_0)}^*||_{L^2}\nonumber\\
&\lesssim ||\rho||_{L^{\infty}}||(u_n-v_0)\chi_{B_{R_{\epsilon}}}||_{L^{\frac{12}{5}}}^2 + \epsilon ||\nabla \phi_{(u_n-v_0)}^*||_{L^2}.\\
\nonumber
\end{align}
So, putting together \eqref{dual proof (i) 1}, \eqref{dual proof (i) 2}, \eqref{dual proof (i) 3}, and \eqref{dual proof (i) 4}, we obtain, for every $\epsilon>0$,

\begin{align}
\bigg|\int_{\R^3}& \big(\rho\phi_{(u_n-v_0)}(u_n-v_0) - \rho_{\infty}\bar\phi_{(u_n-v_0)}(u_n-v_0)\big)h\bigg| \nonumber \\ 
&\leq C (||\rho||_{L^{\infty}}||\nabla \phi_{(u_n-v_0)}||_{L^2}||u_n-v_0||_{L^2(B_{R_{\epsilon}})}+ \epsilon||\nabla \phi_{(u_n-v_0)}||_{L^2}||u_n-v_0||_{L^2} \nonumber \\
&\quad\quad+ \rho_{\infty}||\rho||_{L^{\infty}}||(u_n-v_0)\chi_{B_{R_{\epsilon}}}||_{L^{\frac{12}{5}}}^2||u_n-v_0||_{L^2} +  \rho_{\infty}\epsilon||\nabla \phi_{(u_n-v_0)}^*||_{L^2}||u_n-v_0||_{L^2})||h||_{H^1},\nonumber\\
\nonumber
\end{align}
for some $C>0$. Since $\rho\in L^{\infty}$, $\phi_{(u_n-v_0)}, \phi_{(u_n-v_0)}^*$ are uniformly bounded in $D^{1,2}$, $u_n-v_0$ is uniformly bounded in $L^2$, and $u_n-v_0 \to 0$ in $L^{2}_{\textrm{loc}}$ and $L^{\frac{12}{5}}_{\textrm{loc}}$, then we have proven $(i)$.\\

To prove $(ii)$, we first take any $h\in H^1$, and note that by H\"{o}lder's and Sobolev's inequalities, it holds that\

\begin{align}\label{second part dual lemma 1}
\bigg|\int_{\R^3} (\rho \phi_{u_n} u_n &-\rho \phi_{(u_n-v_0)} (u_n-v_0) - \rho \phi_{v_0} v_0 )h\bigg| \nonumber\\
&\leq ||\rho||_{L^{\infty}} ||\phi_{u_n} u_n - \phi_{(u_n-v_0)} (u_n-v_0) -  \phi_{v_0} v_0 ||_{L^{\frac{3}{2}}}||h||_{L^3}\nonumber\\
&\leq C||\rho||_{L^{\infty}} || \phi_{u_n} u_n - \phi_{(u_n-v_0)} (u_n-v_0) -  \phi_{v_0} v_0 ||_{L^{\frac{3}{2}}}||h||_{H^1},\\
\nonumber
\end{align}
for some $C>0$. Now, by convexity, iterating the inequality

\[ |a+b|^{\frac{3}{2}} \leq \sqrt{2}\left( |a|^{\frac{3}{2}}+ |b|^{\frac{3}{2}}\right),\]\\
we can obtain

\begin{align}\label{second part dual lemma 2}
F_n &\coloneqq \left|  \phi_{u_n} u_n - \phi_{(u_n-v_0)} (u_n-v_0) -  \phi_{v_0} v_0 \right|^{\frac{3}{2}} \nonumber\\
&\leq 2\left(\left|  \left(\phi_{u_n}-\phi_{(u_n-v_0)}\right) u_n \right|^{\frac{3}{2}}+\left| \phi_{(u_n-v_0)} v_0 \right|^{\frac{3}{2}} +\left| \phi_{v_0} v_0 \right|^{\frac{3}{2}} \right). \\
\nonumber
\end{align}
Then, using the Cauchy-Schwarz inequality, we notice that 

\begin{align}
\left|  (\phi_{u_n}-\phi_{(u_n-v_0)}\right| 
&\leq \int_{\R^3} \frac{ \rho |2u_n-v_0||v_0|}{4\pi|x-y|}\dif y\nonumber \\
&\leq \left( \int_{\R^3} \frac{ \rho |2u_n-v_0|^2}{4\pi|x-y|}\dif y \right)^{\frac{1}{2}} \left(\int_{\R^3} \frac{ \rho |v_0|^2}{4\pi|x-y|}\dif y\right)^{\frac{1}{2}}\nonumber\\
&=\phi_{(2u_n-v_0)}^{\frac{1}{2}}\phi_{v_0}^{\frac{1}{2}},\nonumber \\
\nonumber
\end{align}
and so, using this and applying Young's inequality twice, we see that, for every $\epsilon>0$,

\begin{align}\label{second part dual lemma 3}
\left|  \left(\phi_{u_n}-\phi_{(u_n-v_0)}\right) u_n \right|^{\frac{3}{2}}&\leq \phi_{(2u_n-v_0)}^{\frac{3}{4}}\phi_{v_0}^{\frac{3}{4}}|u_n|^{\frac{3}{2}} \nonumber\\
&\leq \epsilon^{\frac{8}{7}}\phi_{(2u_n-v_0)}^{\frac{6}{7}}|u_n|^{\frac{12}{7}} + {\epsilon^{-8}}\phi_{v_0}^6\nonumber\\
&\leq \epsilon^{\frac{8}{7}} \left( \phi_{(2u_n-v_0)}^{6} +|u_n|^2\right)+ {\epsilon^{-8}}\phi_{v_0}^6.\\
\nonumber
\end{align}
Moreover, again using Young's inequality, it holds, for every $\epsilon>0$,

\begin{align}\label{second part dual lemma 4}
\left| \phi_{(u_n-v_0)} v_0 \right|^{\frac{3}{2}}\leq \epsilon^4 \phi_{(u_n-v_0)}^6 +{\epsilon^{-\frac{4}{3}}}v_0^2.\\
\nonumber
\end{align}
Combining \eqref{second part dual lemma 2}, \eqref{second part dual lemma 3}, and \eqref{second part dual lemma 4}, we see that, for all $\epsilon>0$, 

\begin{align}
F_n &\leq 2\left(\epsilon^{\frac{8}{7}} \left( \phi_{(2u_n-v_0)}^{6} +|u_n|^2\right)+ {\epsilon^{-8}}\phi_{v_0}^6+\epsilon^4 \phi_{(u_n-v_0)}^6 +{\epsilon^{-\frac{4}{3}}}v_0^2+\left| \phi_{v_0} v_0 \right|^{\frac{3}{2}} \right) =: G_n, \nonumber\\
\nonumber
\end{align}
and so $G_n-F_n \geq0$. We recall that by assumption $u_n\to v_0$ almost everywhere, and so it follows that $\phi_{(u_n-v_0)} \to 0,$  $\phi_{u_n} \to \phi_{v_0}, $ and $\phi_{(2u_n-v_0)} \to \phi_{v_0}$ almost everywhere. Thus, applying Fatou's Lemma to $G_n-F_n$, we obtain

\begin{align}
2\int_{\R^3} &\left(\epsilon^{\frac{8}{7}} (\phi_{v_0}^{6} +|v_0|^2)+ {\epsilon^{-8}}\phi_{v_0}^6 +{\epsilon^{-\frac{4}{3}}}v_0^2+\left| \phi_{v_0} v_0 \right|^{\frac{3}{2}}\right) \nonumber \\
&\leq  2\bigg(\epsilon^{\frac{8}{7}}\sup_{n\geq1} \int_{\R^3} \left(\phi_{(2u_n-v_0)}^{6} +|u_n|^2\right) + {\epsilon^{-8}}\int_{\R^3}\phi_{v_0}^6+\epsilon^4  \sup_{n\geq1} \int_{\R^3}\phi_{(u_n-v_0)}^6 +{\epsilon^{-\frac{4}{3}}}\int_{\R^3} v_0^2\nonumber\\
& \qquad\qquad+\int_{\R^3}\left| \phi_{v_0} v_0 \right|^{\frac{3}{2}}\bigg)-\limsup_{n\to+\infty} \int_{\R^3} F_n. \nonumber\\
\nonumber
\end{align}
Therefore, after cancelations and using Sobolev's inequality, we see that

\begin{align}
\limsup_{n\to+\infty} \int_{\R^3} F_n &\leq 2\bigg(\epsilon^{\frac{8}{7}}\sup_{n\geq1} \int_{\R^3} \left(\phi_{(2u_n-v_0)}^{6} +|u_n|^2\right) +\epsilon^4  \sup_{n\geq1}\int_{\R^3}\phi_{(u_n-v_0)}^6 -\epsilon^{\frac{8}{7}} \int_{\R^3}\left(\phi_{v_0}^{6}+|v_0|^2\right) \bigg)\nonumber\\
&=2 \bigg(\epsilon^{\frac{8}{7}}\sup_{n\geq1} \left(||\phi_{(2u_n-v_0)}||_{L^6}^{6} +||u_n||_{L^2}^2\right) +\epsilon^4  \sup_{n\geq1}||\phi_{(u_n-v_0)}||_{L^6}^6 \nonumber\\
&\qquad- \epsilon^{\frac{8}{7}} \left(||\phi_{v_0}||_{L^6}^6+||v_0||_{L^2}^2\right)\bigg)\nonumber\\
&\leq C \bigg(\epsilon^{\frac{8}{7}}\sup_{n\geq1} \left(||\nabla\phi_{(2u_n-v_0)}||_{L^2}^{6} +||u_n||_{L^2}^2\right) +\epsilon^4  \sup_{n\geq1}||\nabla\phi_{(u_n-v_0)}||_{L^2}^6 \nonumber\\
&\qquad- \epsilon^{\frac{8}{7}} \left(||\phi_{v_0}||_{L^6}^6+||v_0||_{L^2}^2\right)\bigg),\nonumber\\
\nonumber
\end{align}
for some $C>0$ and for all $\epsilon>0$. We note that $u_n, v_0$ are uniformly bounded in $L^2$ and $\phi_{(u_n-v_0)}, \phi_{(2u_n-v_0)}$ are uniformly bounded in $D^{1,2}$ since $u_n-v_0, 2u_n-v_0$ are uniformly bounded in $H^1$. Moreover, since $v_0\in H^1$, it follows that $||\phi_{v_0}||_{L^6}^6$ is bounded by Sobolev's inequality. Hence, since $\epsilon>0$ is arbitrary, it holds that

\[\lim_{n\to+\infty} \int_{\R^3} F_n =0,\]\\
which combined with \eqref{second part dual lemma 1} yields $(ii),$ and this concludes the proof.
\end{proof}

\medskip 

With these preliminaries in place, we now prove a useful `splitting' proposition for bounded Palais-Smale sequences for $I_{\mu}$, that highlights the connection to the problem at infinity. \\

\begin{proof} [Proof of Proposition \ref{splitting theorem}]
Since $(u_n)_{n\in\N}$ is bounded in $H^1$, we may assume $u_n\rightharpoonup v_0$ in $H^1$ and $u_n\to v_0$ a.e. in $\R^3$. We set $u_n^1 \coloneqq u_n-v_0$, and we first note that

\begin{equation}\label{first norm splitting}
||u_n^1||_{H^1}^2=||u_n-v_0||_{H^1}^2=||u_n||_{H^1}^2-||v_0||_{H^1}^2 +o(1).
\end{equation}\\
We now prove three claims involving the sequence $(u_n^1)_{n\in\N}$.\\

\noindent \textbf{Claim 1.}
\emph{$I_{\mu}^{\infty}(u_n^1)= I_{\mu}(u_n)-I_{\mu}(v_0) +o(1)$.}\\

\noindent Testing $I_{\mu}'(u_n)$ with $(u_n)_-$ we have 

\begin{align*}
I_{\mu}'(u_n)(u_n)_- &= \int_{\R^3}\left(\nabla u_n\nabla ((u_n)_-)+u_n(u_n)_-\right) +\int_{\R^3} \rho \phi_{u_n}u_n(u_n)_- -{\mu}\int_{\R^3}(u_n)_+^p(u_n)_-\\
&=||(u_n)_-||^2_{H^1}+ \int_{\R^3} \rho \phi_{u_n}(u_n)_-^2. \\
\end{align*}
Since $(u_n)_{n\in\N}$ is bounded, $I_{\mu}'(u_n)(u_n)_- =o(1),$ which implies 

\[(u_n)_-\to 0 \text{ in } H^1,\]\\
and by Sobolev's embedding 

\[(u_n)_-\to 0 \text{ in } L^{p+1} \,\, \forall p\in[1,5].\]\\
Now, using this and the boundedness of $(u_n)_{n\in\N}$ in $L^{p+1}$, it holds, by Lemma \ref{MW 1}, that 

\[||(u_n^1)_+||_{L^{p+1}}^{p+1}=||(u_n)_+||_{L^{p+1}}^{p+1}-||(v_0)_+||_{L^{p+1}}^{p+1} +o(1).\]\\
Therefore, using this and \eqref{first norm splitting}, we can see that 

\begin{align}\label{first split of I mu infinity}
I_{\mu}^{\infty}(u_n^1)&= \frac{1}{2}(||u_n||_{H^1}^2-||v_0||_{H^1}^2) +\frac{1}{4}\int_{\R^3}\rho_{\infty}\bar\phi_{(u_n-v_0)}(u_n-v_0)^2 \\
&\qquad -\frac{\mu}{p+1}\left(||(u_n)_+||_{L^{p+1}}^{p+1}-||(v_0)_+||_{L^{p+1}}^{p+1}\right) +o(1).\nonumber\\
\nonumber
\end{align}
\noindent We now notice that since, by symmetry, 

\begin{equation*}
\int_{\R^3} \rho_{\infty}(u_n-v_0)^2\phi_{(u_n-v_0)}= \int_{\R^3} \rho(u_n-v_0)^2\bar\phi_{(u_n-v_0)},\\
\end{equation*} \\
then it holds that

\begin{align*}
\left| \int_{\R^3}\rho \phi_{(u_n-v_0)}(u_n-v_0)^2 - \int_{\R^3}\rho_{\infty}\bar\phi_{(u_n-v_0)}(u_n-v_0)^2 \right| &\leq   \int_{\R^3} \phi_{(u_n-v_0)}(u_n-v_0)^2 |\rho(x)-\rho_{\infty}|  \\
& \qquad + \int_{\R^3}\bar\phi_{(u_n-v_0)}(u_n-v_0)^2 |\rho(x)-\rho_{\infty}| \\
\\
&=: I_1 +I_2.\\
\end{align*}
We note that for all $\epsilon>0$ there exists $R_{\epsilon}>0$ such that $|\rho(x)-\rho_{\infty}|<\epsilon$ for all $|x|>R_{\epsilon}$. Thus, we can see,

\begin{align*}
I_1&\leq  \int_{B_{R_{\epsilon}}} \phi_{(u_n-v_0)}(u_n-v_0)^2 |\rho(x)-\rho_{\infty}| + \int_{|x|>R_{\epsilon}} \phi_{(u_n-v_0)}(u_n-v_0)^2 |\rho(x)-\rho_{\infty}| \\
&\leq C\left( ||\rho||_{L^{\infty}}|| \,||\nabla \phi_{(u_n-v_0)}||_{L^2}||u_n-v_0||^2_{L^{\frac{12}{5}}(B_{R_\epsilon})} +\epsilon \,||\nabla \phi_{(u_n-v_0)}||_{L^2}||u_n-v_0||^2_{L^{\frac{12}{5}}}\right),\\
\end{align*}
where $C>0$ is a constant. Since $\rho\in L^{\infty}$, $\phi_{(u_n-v_0)}$ is uniformly bounded in $D^{1,2}$ and $u_n-v_0 \to 0$ in $L^{12/5}_{\textrm{loc}}$, the above shows that $I_1 \to 0$ as $n\to +\infty$. Similarly, we can see that

\begin{align*}
I_2\leq  C' \left( ||\rho||_{L^{\infty}}|| \,||\nabla \bar\phi_{(u_n-v_0)}||_{L^2}||u_n-v_0||^2_{L^{\frac{12}{5}}(B_{R_\epsilon})} +\epsilon ||\nabla \bar\phi_{(u_n-v_0)}||_{L^2}||u_n-v_0||^2_{L^{\frac{12}{5}}}\right),\\
\end{align*}
and so $I_2\to 0$ as $n\to +\infty$. Therefore, we have shown that 

\[\int_{\R^3}\rho_{\infty}\bar\phi_{(u_n-v_0)}(u_n-v_0)^2 = \int_{\R^3}\rho \phi_{(u_n-v_0)}(u_n-v_0)^2 +o(1),\]\\
and thus, by the nonlocal Brezis-Lieb Lemma \ref{nonlocalBL}, it holds that 

\[\int_{\R^3}\rho_{\infty}\bar\phi_{(u_n-v_0)}(u_n-v_0)^2 =\int_{\R^3}\rho \phi_{u_n}u_n^2 -\int_{\R^3}\rho \phi_{v_0}v_0^2 +o(1).\]\\
Putting this together with \eqref{first split of I mu infinity}, we see that $I_{\mu}^{\infty}(u_n^1)= I_{\mu}(u_n)-I_{\mu}(v_0) +o(1)$, and the claim is proved.\\ 

\noindent \textbf{Claim 2.}
\emph{$I_{\mu}'(v_0)=0$ and $v_0\geq 0$.}\\

 We notice that for all $\psi\in C^{\infty}_c(\R^3)$, it holds that
 
\[I_{\mu}'(u_n)(\psi)=\int_{\R^3} (\nabla u_n \nabla \psi + u_n \psi) +\int_{\R^3} \rho \phi_{u_n} u_n \psi -\mu\int_{\R^3} (u_n)_+^p\psi.\]\\
Using the fact that $u_n\rightharpoonup v_0$ in $H^1$ and a local compactness argument, we have $I_{\mu}'(u_n)(\psi) = I_{\mu}'(v_0)(\psi) +o(1)$. So, since $I_{\mu}'(u_n) \to 0$ by the definition of a Palais-Smale sequence, it holds that $I_{\mu}'(v_0)=0$ by density. We note that by testing this equation with $(v_0)_-$, we obtain that $v_0\geq0$.\\

\noindent \textbf{Claim 3.}
\emph{$(I_{\mu}^{\infty})'(u_n^1)\to 0$.} \\

\noindent We first note that by Lemma \ref{dual convergence for splitting}, it holds that 

\begin{align}\label{poisson convergence in dual}
\rho \phi_{u_n} u_n & -\rho_{\infty} \bar\phi_{(u_n-v_0)} (u_n-v_0) - \rho \phi_{v_0} v_0 \nonumber\\
&=\rho \phi_{u_n} u_n -\rho \phi_{(u_n-v_0)} (u_n-v_0) - \rho \phi_{v_0} v_0+o(1)\\
&=o(1) \qquad \textrm{in}\,\, H^{-1}(\R^3).\nonumber\\
\nonumber
\end{align}
\noindent Moreover, since we have showed in Claim 1 that $(u_n)_-\to 0$ in $L^{p+1}$, then, by Lemma \ref{MW 2}, it follows that

\begin{equation}\label{up in dual}
(u_n)_+^p -(u_n-v_0)_+^p - (v_0)^p =o(1), \qquad \textrm{in}\,\, H^{-1}(\R^3).\\
\end{equation}\\
\noindent Therefore, using \eqref{poisson convergence in dual} and \eqref{up in dual}, we can conclude that 

\[(I_{\mu}^{\infty})'(u_n^1) = I_{\mu}' (u_n)- I_{\mu}' (v_0) + o(1),\]\\
and so
\[(I_{\mu}^{\infty})'(u_n^1)=o(1)\]\\
since $I_{\mu}' (u_n)\to 0$ by the definition of Palais-Smale sequence and $I_{\mu}' (v_0)=0$ by Claim 2. This completes the proof of the claim.\\

\noindent \textbf{Partial conclusions.} With these results in place, we now define 

\[\delta \coloneqq \limsup_{n\to+\infty}\left(\sup_{y\in\R^3}\int_{B_1(y)}|u_n^1|^{p+1}\right).\]\\
\noindent We can see that $\delta\geq0$. If $\delta=0$, the P.\ L.\ Lions Lemma \cite{Lions} implies $u_n^1\to 0$ in $L^{p+1}$. Since it holds that

\[(I_{\mu}^{\infty})'(u_n^1)(u_n^1)=||u_n^1||_{H^1}^2+\int_{\R^3} \rho_{\infty}\bar\phi_{u_n^1}(u_n^1)^2-\mu\int_{\R^3}(u_n^1)_+^{p+1},\]\\
and $(I_{\mu}^{\infty})'(u_n^1)\to 0$ by Claim 3, then, if $u_n^1\to 0$ in $L^{p+1}$, it follows that $u_n^1\to 0$ in $H^1$. In this case, we are done since we have $u_n\to v_0$ in $H^1$. Therefore, we assume $\delta>0$. This implies that there exists $(y_n^1)_{n\in\N}\subset\R^3$ such that 

\[\int_{B_1(y_n^1)}|u_n^1|^{p+1}>\frac{\delta}{2}.\]\\
We now define $v_n^1\coloneqq u_n^1(\cdot+y_n^1)$. We my assume $v_n^1\rightharpoonup v_1$ in $H^1$ and $v_n^1\to v_1$ a.e. in $\R^3$. Then, since 

\[\int_{B_1(0)}|v_n^1|^{p+1}>\frac{\delta}{2},\]\\
it follows from Rellich Theorem that $v_1\not\equiv 0$. Since $u_n^1\rightharpoonup0$ in $H^1$, then $(y_n^1)_{n\in\N}$ must be unbounded and so we assume, up to a subsequence, $|y_n^1|\to+\infty$. We set $u_n^2\coloneqq u_n^1-v_1(\cdot-y_n^1)$, and, using \eqref{first norm splitting}, we note that 

\begin{equation}\label{second norm splitting}
||u_n^2||_{H^1}^2=||u_n^1||_{H^1}^2-||v_1||_{H^1}^2 +o(1)=||u_n||_{H^1}^2-||v_0||_{H^1}^2-||v_1||_{H^1}^2 +o(1).
\end{equation}\\
We now prove three claims regarding the sequence $(u_n^2)_{n\in\N}$.\\

\noindent \textbf{Claim 4.}
\emph{$I_{\mu}^{\infty}(u_n^2)=  I_{\mu}(u_n)-I_{\mu}(v_0)-I_{\mu}^{\infty}(v_1) +o(1)$.}\\

Arguing similarly as in Claim 1, namely testing $I_{\mu}^{\infty}(u_n^1)$ with $(u_n^1)_-$, we can show that $(u_n^1)_- \to 0$ in $L^{p+1}$, and so $(u_n^1(\cdot+y_n^1))_- \to 0$ in $L^{p+1}$. Thus, once again using Lemma \ref{MW 1}, we can see that  

\begin{align*}
||(u_n^1)_+||_{L^{p+1}}^{p+1}
&=||(u_n^1(\cdot+y_n^1)-v_1)_+||_{L^{p+1}}^{p+1} +||(v_1)_+||_{L^{p+1}}^{p+1}+o(1)\\
&=||(u_n^1-v_1(\cdot-y_n^1))_+||_{L^{p+1}}^{p+1} +||(v_1)_+||_{L^{p+1}}^{p+1}+o(1)\\
&=||(u_n^2)_+||_{L^{p+1}}^{p+1} +||(v_1)_+||_{L^{p+1}}^{p+1}+o(1),\\
\end{align*}
and so

\[||(u_n^2)_+||_{L^{p+1}}^{p+1} =||(u_n^1)_+||_{L^{p+1}}^{p+1}-||(v_1)_+||_{L^{p+1}}^{p+1}+o(1).\]\\
\noindent Therefore, using this and \eqref{second norm splitting}, we have that 

\begin{align*}
I_{\mu}^{\infty}(u_n^2)&=||u_n^1||_{H^1}^2-||v_1||_{H^1}^2 +\frac{1}{4}\int_{\R^3} \rho_{\infty}\bar\phi_{(u_n^1-v_1(\cdot-y_n^1))}(u_n^1-v_1(x-y_n^1))^2\\
&\qquad-\frac{\mu}{p+1}\left(||(u_n^1)_+||_{L^{p+1}}^{p+1}-||(v_1)_+||_{L^{p+1}}^{p+1}\right)+o(1).\\
\end{align*}
\noindent We can show, by changing variables and using Lemma \ref{nonlocalBL}, that

\begin{align*}
\int_{\R^3} \rho_{\infty}\bar\phi_{(u_n^1-v_1(\cdot-y_n^1))}(u_n^1-v_1(x-y_n^1))^2
&=\int_{\R^3} \rho_{\infty}\bar\phi_{u_n^1}(u_n^1)^2-\int_{\R^3} \rho_{\infty}\bar\phi_{v_1}v_1^2+o(1).\\
\end{align*}
\noindent Thus, by combining the last two equations and using Claim 1, we see that $I_{\mu}^{\infty}(u_n^2)= I_{\mu}^{\infty}(u_n^1)-I_{\mu}^{\infty}(v_1) +o(1)=I_{\mu}(u_n)-I_{\mu}(v_0)-I_{\mu}^{\infty}(v_1) +o(1)$, and so the claim is proved.\\

\noindent \textbf{Claim 5.}
\emph{$(I_{\mu}^{\infty})'(v_1)=0$ and $v_1\geq0$.}\\

Let $h\in H^1(\R^3)$ and set $h_n\coloneqq h(\cdot-y_n^1)$. By a change of variables, we can see that

\begin{align*}
(I_{\mu}^{\infty})'(u_n^1(x+y_n^1))(h)
&=(I_{\mu}^{\infty})'(u_n^1)(h_n),\\
\end{align*}
\noindent and so, since $(I_{\mu}^{\infty})'(u_n^1)\to 0$ by Claim 3, we have that 

\begin{equation}\label{convergence deriv functional}
(I_{\mu}^{\infty})'(u_n^1(x+y_n^1))\to 0.
\end{equation}\\
\noindent We now note, for any $\psi\in C^{\infty}_c(\R^3)$, it holds that

\begin{align*}
\bigg|\int_{\R^3} &\rho_{\infty}\bar\phi_{u_n^1}(x+y_n^1)u_n^1(x+y_n^1)\psi-\int_{\R^3} \rho_{\infty}\bar\phi_{v_1}v_1\psi \bigg|\\
&\leq\left|\int_{\R^3} \rho_{\infty}\bar\phi_{u_n^1}(x+y_n^1)(u_n^1(x+y_n^1)-v_1)\psi\right| +\left|\int_{\R^3} \rho_{\infty}(\bar\phi_{u_n^1}(x+y_n^1)-\bar\phi_{v_1})v_1\psi\right|\\
&\leq \rho_{\infty}||\bar\phi_{u_n^1}(\cdot+y_n^1)||_{L^6}||u_n^1(\cdot+y_n^1)-v_1||_{L^2(\textrm{supp}\, \psi)}||\psi||_{L^3}  \\
&\qquad\qquad+\rho_{\infty}||\bar\phi_{u_n^1}(\cdot+y_n^1)-\bar\phi_{v_1}||_{L^2(\textrm{supp}\, \psi)}||v_1||_{L^6}||\psi||_{L^3}, \\
\end{align*}
\noindent and so since $u_n^1(\cdot+y_n^1)-v_1 \to 0$ in $L^{2}_{\textrm{loc}}$ and $\bar\phi_{u_n^1}(\cdot+y_n^1)-\bar\phi_{v_1}\to 0$ in $L^{2}_{\textrm{loc}}$, and all of the other terms in the final equation are bounded, then we have shown that 

\[\int_{\R^3} \rho_{\infty}\bar\phi_{u_n^1}(x+y_n^1)u_n^1(x+y_n^1)\psi \to \int_{\R^3} \rho_{\infty}\bar\phi_{v_1}v_1\psi. \]\\
\noindent Using this and the fact that $u_n^1(\cdot+y_n^1)\rightharpoonup v_1$ in $H^1$, it follows by standard arguments that $(I_{\mu}^{\infty})'(u_n^1(x+y_n^1))(\psi)=(I_{\mu}^{\infty})'(v_1)(\psi)+o(1)$. This implies that $(I_{\mu}^{\infty})'(v_1)=0$, by \eqref{convergence deriv functional} and density. Testing this equation with $(v_1)_-$, shows that $v_1\geq0$.\\

\noindent \textbf{Claim 6.}
\emph{$(I_{\mu}^{\infty})'(u_n^2)\to 0$.} \\

We take any $h \in H^1(\R^3)$ and set $h_n \coloneqq h (\cdot+y_n^1)$. We note that, by a change of variables, it holds that

\begin{align}\label{claim 6 eq 1}
(I_{\mu}^{\infty})'(u_n^2)(h)
&=(I_{\mu}^{\infty})'(u_n^1(\cdot+y_n^1)-v_1)(h_n).\\
\nonumber
\end{align}
\noindent Now, arguing as we did in the proof of $(ii)$ of Lemma \ref{dual convergence for splitting}, we can show that

\begin{equation}\label{poisson convergence in dual 2} 
\rho_{\infty}\bar\phi_{u_n^1}(\cdot+y_n^1)u_n^1(\cdot+y_n^1)-\rho_{\infty}\bar\phi_{(u_n^1(\cdot+y_n^1)-v_1)}(u_n^1(\cdot+y_n^1)-v_1) - \rho_{\infty}\bar\phi_{v_1}v_1 =o(1),\quad \textrm{in}\,\, H^{-1}(\R^3). 
\end{equation}\\
Moreover, since we showed $(u_n^1(\cdot+y_n^1))_- \to 0$ in $L^{p+1}$ in Claim 4, we can once again use Lemma \ref{MW 2} to conclude that

\[ (u_n^1(\cdot+y_n^1))_+^p-(u_n^1(\cdot+y_n^1)-v_1)_+^p - (v_1)_+^p=o(1), \qquad \textrm{in}\,\, H^{-1}(\R^3). \]\\
\noindent It follows from this and \eqref{poisson convergence in dual 2} that 

\begin{equation}\label{claim 6 eq 2}
(I_{\mu}^{\infty})'(u_n^1(\cdot+y_n^1)-v_1)=(I_{\mu}^{\infty})'(u_n^1(\cdot+y_n^1))-(I_{\mu}^{\infty})'(v_1)+o(1) \qquad \textrm{in}\,\, H^{-1}(\R^3) .
\end{equation}\\
Since, by Claim 5 and a change of variables,  it holds that 

\[(I_{\mu}^{\infty})'(u_n^1(\cdot+y_n^1))(h_n)-(I_{\mu}^{\infty})'(v_1)(h_n) = (I_{\mu}^{\infty})'(u_n^1)(h),\]\\
then combining this, \eqref{claim 6 eq 1} and  \eqref{claim 6 eq 2}, we see that

\[(I_{\mu}^{\infty})'(u_n^2)=(I_{\mu}^{\infty})'(u_n^1)+o(1)\qquad \textrm{in}\,\, H^{-1}(\R^3).\]\\
It therefore follows that $(I_{\mu}^{\infty})'(u_n^2)\to 0$ since $(I_{\mu}^{\infty})'(u_n^1)\to 0$ by Claim 3, and we are done.\\

\noindent \textbf{Conclusions.}
With these results in place we can now see that if $u_n^2\to 0$ in $H^1$, then we are done. Otherwise, $u_n^2\rightharpoonup 0$ in $H^1$, but not strongly, and so we repeat the argument. By iterating the procedure, we obtain sequences of points $(y_n^j)_{n\in\N}\subset\R^3$ such that $|y_n^j|\to+\infty$, $|y_n^j-y_n^{j'}|\to+\infty$ as $n\to+\infty$ if $j\neq j'$ and a sequence of functions $(u_n^j)_{n\in\N}$ with $u_n^1=u_n-v_0$ and $u_n^j=u_n^{j-1}-v_{j-1}(\cdot-y_n^{j-1})$ for $j\geq 2$ such that\\
\begin{align}
&u_n^j(x+y_n^j)\rightharpoonup v_j(x) \text{ in } H^1, \nonumber \\
& ||u_n||_{H^1(\R^3)}^2 = \sum_{j=0}^{l-1} ||v_j||_{H^1(\R^3)}^2+||u_n^l||_{H^1}^2 +o(1),\label{finite iteration}\\
& ||u_n-v_0- \sum_{j=1}^{l} v_j(\cdot -y_n^j)||_{H^1(\R^3)}\to 0 \text{ as } n\to+\infty,\nonumber \\
& I_{\mu}(u_n) = I_{\mu}(v_0)+\sum_{j=1}^{l-1}I_{\mu}^{\infty}(v_j)+I_{\mu}^{\infty}(u_n^l)+o(1), \nonumber\\
& (I_{\mu}^{\infty})'(v_j)=0 \text{ and } v_j\geq0 \text{ for } j\geq1,\nonumber\\
\nonumber
\end{align}

\noindent We notice from the last equation that it holds that $(I_{\mu}^{\infty})'(v_j)(v_j)=0$ for each $j\geq1$. Using this, the Sobolev embedding theorem and the fact that $\mu\leq1$, we have that

\[S_{p+1}||v_j||_{L^{p+1}}^2 \leq ||v_j||_{H^{1}}^2 \leq ||v_j||_{H^{1}}^2 + \int_{\R^3} \rho_{\infty}\bar\phi_{v_j} (v_j)^2 = \mu ||(v_j)_+||_{L^{p+1}}^{p+1}\leq ||v_j||_{L^{p+1}}^{p+1},\]\\
\noindent and therefore, we can conclude that, for each $j\geq1$,

\[||v_j||_{H^1}^2\geq(S_{p+1})^{\frac{p+1}{p-1}}.\]\\
Combining this and the fact $(u_n)_{n\in\N}$ is bounded in $H^1$, we see from \eqref{finite iteration} that the iteration must stop at some finite index $l\in\N$.
\end{proof}

\subsection{Proof of Theorem \ref{general theorem}}
We are finally in position to establish two sufficient conditions that guarantee the existence of a mountain pass solution to \eqref{main SP system} in the case of non-coercive $\rho$, if $p\in(2,3)$.\\

\begin{proof} [Proof of Theorem \ref{general theorem}]
We first note that by Lemma \ref{claim3} with $E=H^1$, the set $\mathcal{M}$, defined in \eqref{set of mu}, is nonempty. \\

\noindent \textbf{Claim 1.} \emph{Under assumptions $(i)$, the values $c_{\mu}$ are critical levels of $I_{\mu}$ for all $\mu \in(1-\epsilon,1]\cap \mathcal M$, with $\epsilon>0$ sufficiently small. Namely, there exists a nonnegative ${u} \in H^1$ such that $I_{\mu}({u}) = c_{\mu}$ and $I_{\mu}'({u})= 0$ for all $\mu \in(1-\epsilon,1]\cap \mathcal M$. Under assumptions $(ii)$, the same statement holds for all $\mu \in \mathcal{M}$.\\}



We recall that for all $\mu \in \mathcal{M}$, by definition, there exists a bounded sequence $(u_n)_{n\in\N}\subset H^1$ such that $I_{\mu}(u_n) \to c_\mu$ and $I'_{\mu}(u_n)\to 0$. We note that by Proposition \ref{splitting theorem} and the definition of $(u_n)_{n\in\N}$, it holds that

\begin{equation}\label{energy level sum}
c_{\mu} =I_{\mu}(v_0)+\sum_{j=1}^{l}I_{\mu}^{\infty}(v_j),
\end{equation}\\
\noindent where $v_0$ is a nonnegative solution of \eqref{SP one equation perturbed} and $v_j$ are nonnegative solutions of \eqref{SP one equation perturbed infinity} for $1\leq j \leq l$. \\

 Assume that $(i)$ holds. For $\epsilon>0$ small enough, it holds that $c_\mu<c_\mu^\infty$ for all $\mu\in (1-\epsilon,1]\cap \mathcal M,$ by continuity. Pick $\mu$ on this set. If $v_j$ is nontrivial for some $1\leq j \leq l$, it would follow that $I_{\mu}^{\infty}(v_j) \geq c_{\mu}^{\infty}>c_{\mu}$ by Lemma \ref{lower bound nontriv sols infinity}. This is in contradiction with \eqref{energy level sum} since $I_{\mu}(v_0) \geq 0$, by Lemma \ref{lower bound energy nontriv sols}, and so, $v_j \equiv 0$ for all $1\leq j \leq l$. Therefore, $u_n \to {v_0}$ in $H^1$ by $(iv)$ of Proposition \ref{splitting theorem}, $I_{\mu}(v_0)=c_{\mu}$ by \eqref{energy level sum}, and $I_{\mu}'(v_0)=0$ since $v_0$ is a nonnegative solution of \eqref{SP one equation perturbed}. Thus, we have shown $c_{\mu}$ is a critical level of $I_{\mu}$ in this case.\\

Now, assume that $(ii)$ holds. We note that this implies that $I_{\mu}(\gamma(t))\leq I_{\mu}^{\infty}(\gamma(t))$ for each fixed $\gamma\in \Gamma^{\infty}$, $\mu \in [\frac{1}{2},1]$ and $t\in[0,1]$. It therefore follows that $I_{\frac{1}{2}}(\gamma(1))\leq I_{\frac{1}{2}}^{\infty}(\gamma(1))<0$ for all $\gamma\in \Gamma^{\infty}$, and so $\Gamma^{\infty} \subseteq \Gamma$. Using this and Lemma \ref{lower bound nontriv sols infinity}, we can see that for each nontrivial $v_j$ in \eqref{energy level sum}, it holds

\begin{align}\label{energy level inequality}
I_{\mu}^{\infty}(v_j)&\geq c_{\mu}^{\infty} \nonumber \\
&=\inf_{\gamma \in \Gamma^{\infty}} \max_{t\in [0,1]}  I_{\mu}^{\infty}(\gamma(t)),\nonumber \\
&\geq \inf_{\gamma \in \Gamma^{\infty}} \max_{t\in [0,1]}  I_{\mu}(\gamma(t)) \\
&\geq \inf_{\gamma \in \Gamma} \max_{t\in [0,1]}  I_{\mu}(\gamma(t)) \nonumber \\
&=c_{\mu}.\nonumber\\
\nonumber
\end{align}
\noindent We now assume, by contradiction, $v_0 \equiv 0$ in \eqref{energy level sum}, which would imply $I_{\mu}(v_0)=0$. Using this and \eqref{energy level inequality}, we see from \eqref{energy level sum} that there exists one nontrivial $v_j$, call it $v_1$, such that $v_1$ is a nonnegative solution of \eqref{SP one equation perturbed infinity} and

\begin{equation}\label{energy level equal} 
I_{\mu}^{\infty}(v_1)=c_{\mu}^{\infty}= c_{\mu}.
\end{equation}\\
\noindent Define $\bar{v}_t(x) =t^2 v_1(tx)$ and $\gamma: \R \to H^1(\R^3)$, $\gamma(t)=\bar{v}_t$. By Lemma 3.3 in \cite[p.\ 663]{RuizJFA}, the function $f(t)=I^\infty_\mu(\bar{v}_t)$ has a unique critical point corresponding to its maximum, and it can be shown that $f'(1)=0$ by Nehari's and Pohozaev's identities for $v_1.$ We deduce that 

\[\max_{t\in\R} I_{\mu}^{\infty}(\gamma(t))=I_{\mu}^{\infty} (v_1),\]\\
and that there exists $M>0$ such that

\[I^{\infty}_{\frac{1}{2}}(\gamma(M))<0,\]
and

\[\max_{t\in\R} I_{\mu}^{\infty}(\gamma(t))= \max_{t\in[0,M]} I_{\mu}^{\infty}(\gamma(t)).\]\\
We then define $\gamma_0:[0,1] \to H^1(\R^3)$, $\gamma_0(t)=\gamma (Mt)$, and see from the above work that $\gamma_0 \in \Gamma^{\infty}$. Therefore, we have that

\begin{align*}
I_{\mu}^{\infty} (v_1)&=\max_{t\in\R} I_{\mu}^{\infty}(\gamma(t))\\
&=\max_{t\in[0,M]} I_{\mu}^{\infty}(\gamma(t))\\
&=\max_{t\in[0,1]} I_{\mu}^{\infty}(\gamma_0(t)).\\
\end{align*}
\noindent Since we have $v_1>0$ on $B$ where $\rho(x) < \rho_{\infty}$ by Lemma \ref{lower bound nontriv sols infinity}, it follows that 

\begin{align*}
c_{\mu}^{\infty}&= I_{\mu}^{\infty}(v_1)\\
&=\max_{t\in[0,1]} I_{\mu}^{\infty}(\gamma_0(t))\\
&>\max_{t\in[0,1]} I_{\mu}(\gamma_0(t))\\
&\geq \inf_{\gamma \in \Gamma^{\infty}} \max_{t\in[0,1]} I_{\mu}(\gamma(t))\\
&\geq \inf_{\gamma \in \Gamma} \max_{t\in[0,1]} I_{\mu}(\gamma(t))\\
&=c_{\mu},\\
\end{align*}
\noindent which contradicts \eqref{energy level equal}. Therefore, we have shown that $v_0\not\equiv 0$. Now, since $v_0$ is a nontrivial and nonnegative solution of \eqref{SP one equation perturbed}, then $I_{\mu}(v_0) > 0$ by Lemma \ref{lower bound energy nontriv sols}. Putting this and \eqref{energy level inequality} together in \eqref{energy level sum}, it follows that $v_j \equiv 0$ for all $1\leq j \leq l$. Therefore, $u_n \to {v_0}$ in $H^1$ by $(iv)$ of Proposition \ref{splitting theorem}, $I_{\mu}(v_0)=c_{\mu}$ by \eqref{energy level sum}, and $I_{\mu}'(v_0)=0$ since $v_0$ is a nonnegative solution of \eqref{SP one equation perturbed}. This concludes the proof of Claim 1.\\

\noindent \textbf{Claim 2.}
\emph{Let $\mu_n \to 1$ be an increasing sequence in $(1-\epsilon, 1] \cap \mathcal{M}$ and (resp.) $\mathcal{M}$ under assumptions $(i)$ and (resp.) $(ii)$. Assume $(u_n)_{n\in\N}\subset H^1$ is such that $u_n$ is nonnegative, $I_{\mu_n}(u_n)=c_{\mu_n}$ and $I'_{\mu_n}(u_n)=0$ for each $n$. Then, there exists a nonnegative $u\in H^1$ such that, up to a subsequence, $u_n\rightarrow u$ in $H^1$, $I(u)=c$, and $I'(u)=0$.}\\

Since $u_n$ is nonnegative, $(u_n)_+=u_n$, and so we can see that

\begin{align}\label{I(u_n) equation}
I(u_n) &= I_{\mu_n}(u_n)+\frac{\mu_n-1}{p+1} \int_{\R^3}u_n^{p+1} \nonumber \\
&=c_{\mu_n} +\frac{\mu_n-1}{p+1} \int_{\R^3}u_n^{p+1},\\
\nonumber
\end{align}
\noindent and, for all $v\in H^1(\R^3)$,

\begin{align}\label{I'(u_n) equation}
I'(u_n)(v)&=I'_{\mu_n}(u_n)(v)+(\mu_n-1)\int_{\R^3}u_n^{p} v \nonumber \\
&\leq |\mu_n-1| \,||u_n||^{p}_{L^{p+1}}||v||_{L^{p+1}} \nonumber \\
&\leq S_{p+1}^{-\frac{1}{2}} |\mu_n-1| \,||u_n||^{p}_{L^{p+1}}||v||_{H^1}.\\
\nonumber
\end{align}
\noindent Set $\alpha_n=\int_{\R^3} (|\nabla u_n|^2+ u_n^2)$, $\gamma_n=\int_{\R^3} \rho\phi_{u_n} u_n^2 $, and $\delta_n=\mu_n \int_{\R^3} u_n^{p+1}$. As in Theorem \ref{theorem existence coercive} Claim 2, we see that $\alpha_n$, $\gamma_n$, and $\delta_n$ satisfy \eqref{mainSOE}, and thus we can obtain that $\alpha_n$, $\gamma_n$, and $\delta_n$ are all bounded. Therefore, using this, \eqref{I(u_n) equation}, \eqref{I'(u_n) equation}, and the fact that $c_{\mu_n} \to c$ as $ \mu_n \nearrow 1$ by definition \eqref{minmax level0}, we can deduce that $||u_n||_{H^1}$ is bounded, $I(u_n) \to c$ and $I'(u_n) \to 0$ as $n\to +\infty$. That is, we have shown that $(u_n)_{n\in\N}$ is a bounded Palais-Smale sequence for $I=I_1$ at the level $c=c_1$, and so, $1\in\mathcal{M}$. By Claim 1, it follows that  there exists a nonnegative ${u} \in H^1$ such that, up to a subsequence, $u_n \to {u}$ in $H^1$, $I({u}) = c$, and $I'({u})= 0$. \\


\noindent {\bf Conclusion.} \noindent
Let $\mu_n \to 1$ be an increasing sequence in $(1-\epsilon, 1] \cap \mathcal{M}$ and (resp.) $\mathcal{M}$ under assumptions $(i)$ and (resp.) $(ii)$. By Claim $1$, we can choose $(u_n)_{n\in\N}\subset H^1$ such that $u_n$ is nonnegative, $I_{\mu_n}(u_n)=c_{\mu_n}$ and $I'_{\mu_n}(u_n)=0$ for each $n$. By Claim $2$, it follows that there exists a nonnegative $u\in H^1$ such that, up to a subsequence, $u_n\rightarrow u$ in $H^1$, $I(u)=c$, and $I'(u)=0$. That is, we have shown $(u, \phi_u) \in H^1(\R^3) \times D^{1,2}(\R^3)$ solves \eqref{main SP system}. Since $u$ and $\phi_u$ are nonnegative by construction, by regularity and the strong maximum principle, it follows that they are, in fact, strictly positive. This concludes the proof. \end{proof}

\subsection{Proof of Theorem \ref{noncoercive large p}}

\begin{proposition}\label{PS cond} [{\bf Palais-Smale condition for $p\geq 3$}] Let $\rho\in C(\R^3)$ be nonnegative such that $\rho(x)\rightarrow \rho_\infty>0$ as $|x|\rightarrow +\infty$ and suppose either of the
following conditions hold:\\

$(i)$ $c<c^{\infty}$ \\

\noindent or\\

$(ii)$ $\rho(x) \leq \rho_{\infty}$ for all $x\in \R^3$, with strict inequality, $\rho(x) < \rho_{\infty}$, on some ball $B\subset \R^3,$\\

\noindent where $c$ and (resp.) $c^\infty$ are defined in (\ref{minmax level0}) and (resp.) (\ref{minmax level00}).
Then, for any $p\in[3,5)$, every Palais-Smale sequence $(u_n)_{n\in\N}\subset H^{1}(\R^3)$ for $I$, at the level $c$, is relatively compact. In particular, $c$ is 
a critical level for $I.$
\end{proposition}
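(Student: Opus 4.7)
The plan is to follow the scheme used in Theorem \ref{general theorem}, which for $p\geq 3$ becomes substantially shorter because no monotonicity trick is required to produce bounded Palais--Smale sequences. Given a PS sequence $(u_n)_{n\in\N}\subset H^1$ for $I$ at level $c$, boundedness follows directly from the identity
\begin{equation*}
I(u_n)-\tfrac{1}{p+1}I'(u_n)(u_n)=\left(\tfrac{1}{2}-\tfrac{1}{p+1}\right)\|u_n\|_{H^1}^2+\left(\tfrac{1}{4}-\tfrac{1}{p+1}\right)\int_{\R^3}\rho\phi_{u_n}u_n^2,
\end{equation*}
since both coefficients are nonnegative when $p\geq 3$ and the left hand side equals $c+o(1)+o(\|u_n\|_{H^1})$; this dominates $\tfrac{p-1}{2(p+1)}\|u_n\|_{H^1}^2$ and forces a uniform $H^1$ bound on $(u_n)_{n\in\N}$.

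Once boundedness is established, Proposition \ref{splitting theorem} applied with $\mu=1$ yields a nonnegative $v_0$ solving \eqref{SP one equation}, nonnegative $v_1,\ldots,v_l$ solving \eqref{SP infinity}, and translation centres $|y_n^j|\to+\infty$, such that
\begin{equation*}
c=I(v_0)+\sum_{j=1}^{l}I^{\infty}(v_j),\qquad \Big\|u_n-v_0-\sum_{j=1}^{l} v_j(\cdot-y_n^j)\Big\|_{H^1}\to 0.
\end{equation*}
Under (i), the inequality $I(v_0)\geq 0$ (trivial if $v_0\equiv 0$, otherwise by Lemma \ref{lower bound energy nontriv sols}) together with $I^{\infty}(v_j)\geq c^{\infty}$ for each nontrivial $v_j$ (Lemma \ref{lower bound nontriv sols infinity}) forces $l=0$ because $c<c^{\infty}$; then $u_n\to v_0$ in $H^1$ by item $(iv)$ of the splitting, and $I'(v_0)=0$ identifies $v_0$ as a critical point at level $c$. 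Under (ii) the pointwise inequality $\rho\leq\rho_\infty$ implies $I\leq I^{\infty}$ on $H^1$, hence $\bar{\Gamma}^{\infty}\subseteq\bar{\Gamma}$ and $c\leq c^{\infty}$. If no bubble is present, the argument closes as in (i); otherwise the chain $c=I(v_0)+\sum I^{\infty}(v_j)\geq c^{\infty}\geq c$ would force $v_0\equiv 0$, $l=1$, and $I^{\infty}(v_1)=c^{\infty}=c$. I intend to exclude this degenerate scenario by adapting the scaling argument from Claim 1 of Theorem \ref{general theorem}: setting $\bar v_t(x)=t^2 v_1(tx)$, the uniqueness of the maximum of $t\mapsto I^{\infty}(\bar v_t)$ (Lemma 3.3 of \cite{RuizJFA}) produces a path $\gamma_0\in\bar{\Gamma}^{\infty}\subseteq\bar{\Gamma}$ with $\max_t I^{\infty}(\gamma_0(t))=c^{\infty}$; the strict positivity of $v_1$ (strong maximum principle) combined with $\rho(x)<\rho_\infty$ on $B$ yields a strict Coulomb comparison pointwise along $\gamma_0$, hence $\max_t I(\gamma_0(t))<c^{\infty}$ and consequently $c<c^{\infty}$, contradicting the assumed equality. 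Therefore $l=0$ in regime (ii) as well.

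The genuine obstacle is case (ii) when a priori $c$ could coincide with $c^{\infty}$: the nonstrict pointwise bound $\rho\leq\rho_\infty$ must be promoted to a strict inequality between min-max levels, and this is the one place where the hypothesis of strict inequality on $B$ is decisively used, via the scaling plus strong maximum principle argument. Beyond this, once spurious bubbles are ruled out, strong $H^1$ convergence of $u_n$ to the nonnegative limit $v_0$ and the identity $I'(v_0)=0$ are direct consequences of Proposition \ref{splitting theorem}, so that $c$ is indeed realised as a critical value of $I$.
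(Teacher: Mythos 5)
Your proposal is correct and follows the same scheme as the paper: coercivity of $I-\tfrac{1}{p+1}I'(\cdot)(\cdot)$ (equivalent, up to a factor of $p+1$, to the paper's use of $(p+1)I-I'(\cdot)(\cdot)$) gives the uniform $H^1$ bound, the splitting proposition is then applied with $\mu=1$, and the bubbles are excluded by the same case analysis — hypothesis $(i)$ directly, and hypothesis $(ii)$ via the Ruiz scaling path combined with the strict Coulomb comparison on the ball $B$. The paper simply refers back to Claim 1 of Theorem \ref{general theorem} with $\mu=1$ and $\Gamma,\Gamma^{\infty}$ replaced by $\bar\Gamma,\bar\Gamma^{\infty}$, whereas you spell out that argument; the content is identical.
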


\begin{proof}
\noindent Since, for $p\geq 3$, we have

\[c+1+o(1)\|u_n\|_{H^1(\R^3)}\geq (p+1)I(u_n)-I'(u_n)u_n\geq \|u_n\|^2_{H^1(\R^3)},\]\\
it follows that $(u_n)_{n\in\N}$ is bounded.
By the definition of $u_n$ and Proposition \ref{splitting theorem} with $\mu=1$, it holds that

\begin{equation}\label{energy level sum0}
c =I(v_0)+\sum_{j=1}^{l}I^{\infty}(v_j),\\
\end{equation}\\
where $v_0$ is a nonnegative solution of \eqref{SP one equation} and $v_j$ are nonnegative solutions of \eqref{SP infinity} for $1\leq j \leq l$. Reasoning as in Claim 1 of Theorem \ref{general theorem}, setting $\mu=1$ and replacing $c_{\mu}$, $c_{\mu}^{\infty}$, $\Gamma$, and $\Gamma^{\infty}$ with $c$, $c^{\infty}$, $\bar{\Gamma}$, and $\bar{\Gamma}^{\infty}$, respectively, throughout, the statement follows. This concludes the proof.
\end{proof}

\begin{proof}[Proof of Theorem \ref{noncoercive large p}]
The regularity and the strong maximum principle imply that the nontrivial and nonnegative critical point, $u$, of $I$, found in Proposition \ref{PS cond}, is strictly positive. For the same reason, $\phi_u >0$ everywhere.
\end{proof}

\subsection{Proof of Corollary \ref{theorem existence groundstate noncoercive}}

\begin{proof}[Proof of Corollary \ref{theorem existence groundstate noncoercive}]
If $p\in(3,5)$, we can use the Nehari characterisation of the mountain pass level \eqref{Nehari} with $E=H^1$ to see that the mountain pass solution $u$ found in Theorem \ref{noncoercive large p} is a least energy solution for $I$. If $p\in(2,3]$, we set 

\[c^*\coloneqq \inf_{u\in \mathcal{A}} I(u),\text{ where } \mathcal{A}\coloneqq\{u\in H^1(\R^3)\setminus \{0\} : u \text{ is a nonnegative solution to } \eqref{SP one equation}\},\] \\
and can show that $\mathcal{A}$ is nonempty and $c^*$ is well-defined using the mountain pass critical points that we found in Theorem \ref{noncoercive large p} and Theorem \ref{general theorem} when $p=3$ and $p\in(2,3)$, respectively. It is important to note that when $p=3$, the critical point, $u\in\mathcal{A}$, that we found in Theorem \ref{noncoercive large p} satisfies $I(u)=c$, which implies $c^*\leq c$. Similarly, when $p\in(2,3)$, we can show $c^*\leq c$ using the critical point that we found in Theorem \ref{general theorem}. 
Now, for any $p\in(2,3)$, arguing as in the proof of Corollary \ref{theorem existence groundstate coercive}, we can show that there exists a bounded sequence $(w_n)_{n\in\N}\subset \mathcal{A}$ such that $I(w_n)\to c^*$ as $n\to+\infty$ and $I'(w_n)=0$. By applying Proposition \ref{splitting theorem} with $\mu=1$ to $(w_n)_{n\in\N}$, we can see that

\begin{equation*}
c\geq c^* =I(v_0)+\sum_{j=1}^{l}I^{\infty}(v_j),\\
\end{equation*}\\
where $v_0$ is a nonnegative solution of \eqref{SP one equation} and $v_j$ are nonnegative solutions of \eqref{SP infinity} for $1\leq j \leq l$. Reasoning as in Claim 1 of Theorem \ref{general theorem} with $\mu=1$ and replacing $c_{\mu}$, $c_{\mu}^{\infty}$, $\Gamma$, and $\Gamma^{\infty}$ with $c$, $c^{\infty}$, $\bar{\Gamma}$, and $\bar{\Gamma}^{\infty}$, respectively, throughout, we can show $I(v_0)=c^*$ and $I'(v_0)=0$. We note that by Lemma \ref{lower bound energy nontriv sols}, it holds that $c^* \geq C >0$ for some uniform constant $C>0$, and so it follows that $v_0$ is a nontrivial least energy critical point of $I$. The strict positivity of $v_0$ and $\phi_{v_0}$ follows by regularity and the strong maximum principle since they are nonnegative by construction. This concludes the proof.
\end{proof}

\section{Necessary conditions for concentration of semiclassical states}\label{Necessary conditions for concentration of semiclassical states}
\subsection{Proof of Theorem \ref{necessary conditions E}}

We first prove a necessary condition for the concentration of positive solutions in the semiclassical limit, $\epsilon\to0^+$, in $E$.

\begin{proof}[Proof of Theorem \ref{necessary conditions E}]
We will break the proof into five claims.\\

\noindent \textbf{Claim 1.}
\emph{$\sup_{\epsilon>0}||u_{\epsilon}||_{L^{\infty}(\R^3)} < +\infty$}\\

We will argue by contradiction. Assume, to the contrary, that there exists a sequence $(\epsilon_m)_{m\in\N}$ such that $\epsilon_m \to 0$ as $m\to +\infty$, $u_m \coloneqq u_{\epsilon_m}$ solves \eqref{SPwithlambda} for each $m$, and it holds 

\[||u_m||_{L^{\infty}(\R^3)} \to +\infty \text{ as } m \to +\infty. \]\\
Let

\[ \alpha_m \coloneqq \max u_m, \qquad (\alpha_m \to +\infty \text{ as } m\to +\infty),\]
\[ \beta_m \coloneqq \alpha_m ^{-(p-1)/2}, \qquad (\beta_m \to 0 \text{ as } m\to +\infty).\]\\
Define

\[v_m(x)\coloneqq \frac{1}{\alpha_m} u_m(x_m +\epsilon_m\beta_mx),\]\\
where $x_m$ is a global maximum point of $u_m$. We note that such a point exists because, by  regularity theory, $u_m$ are solutions in the classical sense and, moreover, by the concentration assumption, $u_m$ decays to zero uniformly with respect to $m$. Now, multiplying  \eqref{SPwithlambda} by $\frac{\beta_m^2}{\alpha_m}$, we obtain

\begin{align*}
-\frac{\epsilon_m^2\beta_m^2}{\alpha_m}&\Delta u_m(x_m +\epsilon_m\beta_mx)+\frac{\beta_m^2}{\alpha_m}\lambda u_m(x_m +\epsilon_m\beta_mx)\\
&+\beta_m^2\rho(x_m +\epsilon_m\beta_mx)\phi_{u_m}(x_m +\epsilon_m\beta_mx)\frac{1}{\alpha_m}u_m(x_m +\epsilon_m\beta_mx)=\frac{\beta_m^2}{\alpha_m}u_m^p(x_m +\epsilon_m\beta_mx).\\
\end{align*}
\noindent Noting that $\Delta v_m(x)=\epsilon_m^2\beta_m^2\Delta u_m(x_m +\epsilon_m\beta_mx)/\alpha_m$ and $\beta_m^2/\alpha_m=1/\alpha_m^p$, we see that $v_m$ satisfies

\begin{equation*}
-\Delta v_m + \beta_m^2 \lambda v_m + \beta_m^2\rho (x_m +\epsilon_m\beta_mx)\phi_{u_m}(x_m +\epsilon_m\beta_mx)v_m= v_m^p.\\
\end{equation*}\\
\noindent We further note that 

\begin{align*}
\phi_{u_m}(x_m +\epsilon_m\beta_mx) &= \int_{\R^3} \frac{u_m^2(y)\rho(y)}{4\pi |x_m+\epsilon_m \beta_m x -y| }\dif y\\
&=\int_{\R^3} \frac{u_m^2(x_m+\epsilon_m \beta_m y)\rho(x_m+\epsilon_m \beta_m y)}{4\pi |x_m+\epsilon_m \beta_m x -x_m-\epsilon_m \beta_m y| }\cdot \epsilon_m^3 \beta_m^3 \dif y\\
&=\epsilon_m^2 \beta_m^2\alpha_m^2 \int_{\R^3} \frac{v_m^2( y)\rho(x_m+\epsilon_m \beta_m y)}{4\pi | x - y| }\dif y,\\
\end{align*}
where we have used the change of variables $y \to x_m+\epsilon_m \beta_m y$ in going from the first to second line. Therefore, $v_m$ satisfies

\begin{align*}
-\Delta v_m + \beta_m^2 \lambda v_m + \beta_m^2 \rho (x_m +\epsilon_m\beta_mx)\left( \epsilon_m^2 \beta_m^2\alpha_m^2 \int_{\R^3} \frac{v_m^2( y)\rho(x_m+\epsilon_m \beta_m y)}{4\pi| x - y| } \dif y\right)v_m= v_m^p.\\
\end{align*}
\noindent Namely, since $\beta_m^4\alpha_m^2 =\alpha_m^{-2(p-1)}\alpha_m^2 =\alpha_m^{4-2p}$ (by the definition of $\beta$), we have that $v_m$ satisfies 

\begin{align}\label{vmequation}
-\Delta v_m =- \beta_m^2 \lambda v_m - \epsilon_m^2  \alpha_m^{4-2p}\rho (x_m +\epsilon_m\beta_mx) \left( \int_{\R^3} \frac{v_m^2( y)\rho(x_m+\epsilon_m \beta_m y)}{4\pi| x - y| }\dif y\right)v_m + v_m^p.\\
\nonumber
\end{align}
\noindent It is worth noting here that since $\alpha_m \to +\infty$ as $m\to +\infty$, then $\alpha_m^{4-2p} \to 0$ as $m\to +\infty$ for $p>2$ and $\alpha_m^{4-2p}\to 1$ as $m\to +\infty$ for $p=2$ \footnote{This is the only point in which we use the restriction $p\geq2$.}. \\

We now fix some compact set $K$. We notice that, by construction, $||v_m||_{L^{\infty}(\R^3)} =1$ for all $m$, and, by assumption, $\rho$ is continuous. We also highlight that due to the concentration assumption, we have that the sequence of global maximum points $x_m$ is uniformly bounded with respect to $m$. So, since $v_m^2\rho$ is uniformly bounded in $L^{\infty}(K)$, then $\int_{\R^3} \frac{v_m^2( y)\rho(x_m+\epsilon_m \beta_m y)}{4\pi | x - y| }\dif y$ is uniformly bounded in $C^{0,\alpha}(K)$ and consequently, is uniformly bounded in $L^{\infty}(K)$ (see e.g.\ \cite[p.\ 260]{Lieb and Loss}; \cite[p.\ 11]{Adams}). Thus, the entire right-hand side of \eqref{vmequation} is uniformly bounded in $L^{\infty}(K)$ which implies $v_m$ is uniformly bounded in $C^{1,\alpha}(K)$ (see e.g.\ \cite{Gilbarg and Trudinger}). It then follows that the right-hand side of \eqref{vmequation} is uniformly bounded in $C^{0,\alpha}(K)$, and therefore $v_m$ is uniformly bounded in $C^{2,\alpha}(K)$ by Schauder estimates (see e.g.\ \cite{Gilbarg and Trudinger}). Namely, for $x,y\in K$, $x \neq y$, and for every $m$, it holds that 

\[ |\partial^{\beta} v_m(x) - \partial^{\beta}v_m(y)| \leq C_K|x-y|^{\alpha},  \quad |\beta|\leq 2,\]\\
\noindent for some constant $C_K$ which depends on $K$ but does not depend on $m$. It follows that uniformly on compact sets and for some $v_0\in C^2(\R^3)$,

\[\partial^{\beta} v_m \to \partial^{\beta} v_0  \text{ as } m\to +\infty, \quad |\beta|\leq 2.\]\\
Therefore, taking the limit $m \to +\infty$ in \eqref{vmequation} we get 

\begin{equation*}
\left\{
\begin{array}{lll}
  - \Delta v_0 = v_0^{p},   \quad &x\in \R^3 \\
  \,\,\, v_0(0)=1,
\end{array}
\right.
\end{equation*}\\
\noindent where the second equality has come from the fact that $v_m(0)=u_m(x_m)/\alpha_m=\alpha_m/\alpha_m=1$ for all $m$. On the other hand from the equation, by a celebrated result of Gidas-Spruck \cite{Gidas and Spruck} we infer $v_0 \equiv 0.$  So, we have reached a contradiction, and thus $\sup_{\epsilon>0}||u_{\epsilon}||_{L^{\infty}(\R^3)} < +\infty$. \\

\noindent \textbf{Claim 2.} \emph{Assume there exists a sequence $(\epsilon_k)_{k\in\N}$ such that $\epsilon_k \to 0$ as $k\to +\infty$ and $u_k \coloneqq u_{\epsilon_k}$ solves \eqref{SPwithlambda} for each $k$. Let $w_k(x)\coloneqq u_k(x_0 +\epsilon_kx)$, where $x_0$ is a concentration point for $u_k$. Then, 
\begin{enumerate}[(i)]
\item up to a subsequence, $w_k\to$ some $w_0$ in $C^2_{\textrm{loc}
}(\R^3)$,
\item $w_0 > 0$.\\
\end{enumerate}}

We begin by proving $(i)$. We first notice that $w_k$ solves 

\begin{equation}\label{rescaledSPinE}
\left\{
\begin{array}{lll}
  -\Delta w_k+ \lambda w_k +  \rho (x_0 +\epsilon_kx) \phi_{u_k}(x_0 +\epsilon_kx) w_k = w_k^p,   \quad &x\in \R^3 \\
  \,\,\, -\Delta \phi_{u_k}(x_0 +\epsilon_kx)=\rho(x_0 +\epsilon_kx)w_k^2, &x\in \R^3.\\
\end{array}
\right.
\end{equation}\\
\noindent We note that

\begin{align*}
\phi_{u_k}(x_0 +\epsilon_kx) &= \int_{\R^3} \frac{u_k^2(y)\rho(y)}{4\pi|x_0+\epsilon_k x -y| }\dif y\\
&=\int_{\R^3} \frac{u_k^2(x_0+\epsilon_k y)\rho(x_0+\epsilon_k y)}{4\pi|x_0+\epsilon_k x -x_0-\epsilon_k y| }\cdot \epsilon_k^3 \dif y\\
&=\epsilon_k^2  \int_{\R^3} \frac{w_k^2( y)\rho(x_0+\epsilon_k y)}{4\pi| x - y| }\dif y,\\
\end{align*}
where we have used the change of variables $y \to x_0+\epsilon_k y$ in going from the first to second line. So, $w_k$ solves 

\begin{equation}\label{wkequationE}
-\Delta w_k= - \lambda w_k -\rho (x_0 +\epsilon_kx) \left( \epsilon_k^2  \int_{\R^3} \frac{w_k^2( y)\rho(x_0+\epsilon_k y)}{4\pi| x - y| }\dif y \right)w_k + w_k^p.
\end{equation}\\
\noindent We now once again fix some compact set $K$. We notice that, by Claim 1, $\sup_{k>0}||w_k||_{L^{\infty}(\R^3)} <+\infty$, and, by assumption, $\rho$ is continuous. So, since $w_k^2\rho$ is uniformly bounded in $L^{\infty}(K)$, then $\int_{\R^3} \frac{w_k^2( y)\rho(x_0+\epsilon_k y)}{4\pi| x - y| }\dif y$ is uniformly bounded in $C^{0,\alpha}(K)$ and thus, is uniformly bounded in $L^{\infty}(K)$ (see e.g.\ \cite[p.\ 260]{Lieb and Loss}; \cite[p.\ 11]{Adams}). Therefore, the right-hand side of \eqref{wkequationE} is uniformly bounded in $L^{\infty}(K)$ which implies $w_k$ is uniformly bounded in $C^{1,\alpha}(K)$ (see e.g.\ \cite{Gilbarg and Trudinger}). It follows that the right-hand side of \eqref{wkequationE} is uniformly bounded in $C^{0,\alpha}(K)$, and thus, by Schauder estimates, we have that $w_k$ is uniformly bounded in $C^{2,\alpha}(K)$ (see e.g.\ \cite{Gilbarg and Trudinger}). Since this holds for every compact set contained in $\R^3$, arguing the same way as in Claim 1, it follows that uniformly on compact sets and for some $w_0\in C^2(\R^3)$,

\[\partial^{\beta} w_k \to \partial^{\beta} w_0  \text{ as } k\to +\infty, \quad |\beta|\leq 2.\]\\
\noindent Therefore, taking the limit $k\to +\infty$ in \eqref{wkequationE}, we have 

\begin{equation}\label{w0equation}
-\Delta w_0 +\lambda w_0 =w_0^p, \quad x\in \R^3.
\end{equation}\\

We now aim to prove $(ii)$. Let $x_k$ be a maximum point of $u_k$. Since $u_k$ is a solution to \eqref{SPwithlambda}, we have that 

\[-\epsilon_k^2 \Delta u_k(x_k)+ \lambda u_k(x_k) +\rho (x_k) \phi_{u_{x_k}}(x_k)  u_k(x_k) = u_k^{p} (x_k).\]\\
Noting that $\Delta u_k(x_k) \leq 0$ since $x_k$ is a maximum point of $u_k$, we see that

\[[\lambda +\rho (x_k)\phi_{u_{x_k}}(x_k) ]u_k(x_k)\leq  u_k^{p} (x_k),\]\\
and so

\begin{equation}\label{max values u_k}
u_k(x_k)\geq [\lambda+\rho (x_k)\phi_{u_{x_k}}(x_k) ]^{\frac{1}{p-1}} \geq \lambda^{\frac{1}{p-1}}>0.\\
\end{equation}\\
Therefore, the local maximum values of $u_k$, and hence of $w_k$, are greater than or equal to $\lambda^{\frac{1}{p-1}}$, and since $w_k \to w_0$ in $C^{2}_{\textrm{loc}
}(\R^3)$, then $w_0 \not\equiv 0$. In particular, this and \eqref{w0equation}, imply that $w_0>0$ by the strong maximum principle. \\

\noindent \textbf{Claim 3.} \emph{For large $k$, it holds that $\int_{\R^3}\int_{\R^3}\frac { w_k^2(y) \rho (x_0 +\epsilon_ky)w_k^2(x)\nabla\rho (x_0 +\epsilon_kx)}{4\pi |x-y|} \dif y \,\dif x=0.$}\\\\

We first recall that $w_k$, as defined in Claim 2, solves \eqref{rescaledSPinE}. Multiplying the first equation in \eqref{rescaledSPinE} by $\nabla w_k$ and integrating on $B_R(0)$, we get

\begin{align*}
0=&\int_{B_R} \Delta w_k \nabla w_k \dif x - \int_{B_R} \lambda\frac{\nabla w_k^2}{2}\dif x - \frac{1}{2}\int_{B_R} \nabla (\rho (x_0 +\epsilon_kx)\phi_{u_k}(x_0 +\epsilon_kx)  w_k^2)\dif x\\
&+\frac{\epsilon_k}{2}\int_{B_R} \rho (x_0 +\epsilon_kx) \nabla\phi_{u_k}(x_0 +\epsilon_kx)w_k^2\dif x +\frac{\epsilon_k}{2}\int_{B_R}\nabla\rho (x_0 +\epsilon_kx)\phi_{u_k}(x_0 +\epsilon_kx)w_k^2\dif x\\
&+\int_{B_R} \frac{\nabla w_k^{p+1}}{p+1}\dif x.\\
\end{align*}
\noindent By using the divergence theorem and rearranging terms, this becomes

\begin{align}\label{bigequation}
\frac{\epsilon_k}{2}\int_{B_R}&\nabla\rho (x_0 +\epsilon_kx)\phi_{u_k}(x_0 +\epsilon_kx)w_k^2\dif x\nonumber \\
&= \int_{\partial B_R} \left( \lambda \frac{w_k^2}{2}\nu -\frac{w_k^{p+1}}{p+1}\nu + \frac{1}{2} \rho (x_0 +\epsilon_kx) \phi_{u_k}(x_0 +\epsilon_kx) w_k^2 \nu \right) \dif \sigma \\
& \qquad -\frac{\epsilon_k}{2}\int_{B_R}\ \rho (x_0 +\epsilon_kx) \nabla\phi_{u_k}(x_0 +\epsilon_kx) w_k^2\dif x-\int_{B_R} \Delta w_k \nabla w_k \dif x,\nonumber\\
\nonumber
\end{align}
where $\nu$ is the exterior normal field on $B_R$. We now focus on the second integral on the right-hand side of this equality. We begin by noting that if we multiply the second equation in \eqref{rescaledSPinE} by $\nabla \phi_{u_k}(x_0 +\epsilon_kx)$ and integrate on $B_R(0)$, we get 

\begin{equation}\label{Poisson eq}
-\int_{B_R} \rho (x_0 +\epsilon_kx) \nabla\phi_{u_k}(x_0 +\epsilon_kx)w_k^2\dif x = \int_{B_R}\Delta\phi_{u_k}(x_0 +\epsilon_kx)  \nabla\phi_{u_k}(x_0 +\epsilon_kx) \dif x.\\
\end{equation}\\
\noindent Moreover, using the divergence theorem, we see that
\pagebreak

\begin{align}\label{Poisson after div theorem}
\frac{\epsilon_k}{2}\int_{B_R}\Delta\phi_{u_k}(x_0 +\epsilon_kx)  \frac{\partial}{\partial x_i}\phi_{u_k}(x_0 +\epsilon_kx) dx &= \frac{1}{2}\int_{B_R} \text{div} \left(\nabla \phi_{u_k}(x_0 +\epsilon_kx)  \frac{\partial}{\partial x_i}\phi_{u_k}(x_0 +\epsilon_kx) \right)\dif x \nonumber\\
&\quad -\frac{1}{2}\int_{B_R} \nabla \phi_{u_k}(x_0 +\epsilon_kx)  \frac{\partial}{\partial x_i}(\nabla\phi_{u_k}(x_0 +\epsilon_kx))\dif x \nonumber \\
&= \frac{1}{2} \int_{\partial B_R}\bigg( \frac{\partial \phi_{u_k}(x_0 +\epsilon_kx)}{\partial \nu}\frac{\partial}{\partial x_i} \phi_{u_k}(x_0 +\epsilon_kx) \\
&\quad -\frac{1}{2}|\nabla \phi_{u_k}(x_0 +\epsilon_kx)|^2\nu_i\bigg)\dif \sigma.\nonumber \\
\nonumber
\end{align}
\noindent Therefore, combining \eqref{Poisson eq} and \eqref{Poisson after div theorem}, we obtain 

\begin{align}\label{secondintegral}
-\frac{\epsilon_k}{2}\int_{B_R} \rho (x_0 +\epsilon_kx)&\nabla\phi_{u_k}(x_0 +\epsilon_kx) w_k^2\dif x\nonumber\\
&=\frac{1}{2} \int_{\partial B_R}\bigg( \nabla \phi_{u_k}(x_0 +\epsilon_kx)\frac{\partial \phi_{u_k}(x_0 +\epsilon_kx)}{\partial \nu} -\frac{1}{2}|\nabla \phi_{u_k}(x_0 +\epsilon_kx)|^2\nu\bigg)\dif \sigma.\\
\nonumber
\end{align}
\noindent Turning our attention to the third integral on the right-hand side of \eqref{bigequation} and by arguing in a similar way as above, we can show that

\begin{equation}\label{thirdintegral}
\int_{B_R}\Delta w_k \nabla w_k \dif x = \int_{\partial B_R}\left( \nabla w_k\frac{\partial w_k}{\partial \nu} -\frac{1}{2}|\nabla w_k|^2\nu\right)\dif \sigma.\\
\end{equation}\\
\noindent Therefore, using \eqref{secondintegral} and \eqref{thirdintegral}, we see that \eqref{bigequation} becomes

\begin{align}\label{surfaceintegralequationtheorem2}
\frac{\epsilon_k}{2}\int_{B_R}\nabla\rho (x_0 +\epsilon_kx)&\phi_{u_k}(x_0 +\epsilon_kx)w_k^2\dif x \nonumber\\
& = \int_{\partial B_R} \bigg(\lambda \frac{w_k^2}{2}\nu -\frac{w_k^{p+1}}{p+1}\nu + \frac{1}{2} \rho (x_0 +\epsilon_kx)\phi_{u_k}(x_0 +\epsilon_kx)  w_k^2 \nu  \\
&+\frac{1}{2} \nabla \phi_{u_k}(x_0 +\epsilon_kx)\frac{\partial \phi_{u_k}(x_0 +\epsilon_kx)}{\partial \nu} -\frac{1}{4}|\nabla \phi_{u_k}(x_0 +\epsilon_kx)|^2\nu \nonumber\\
& - \nabla w_k\frac{\partial w_k}{\partial \nu} +\frac{1}{2}|\nabla w_k|^2\nu \bigg)\dif \sigma.\nonumber\\
\nonumber
\end{align}
\noindent Call the integral on the right-hand side of this equation $I_R$. Then,

\begin{align*}
|I_R| &\leq \int_{\partial B_R} \bigg( \lambda \frac{w_k^2}{2} +\frac{w_k^{p+1}}{p+1} + \frac{1}{2}\rho (x_0 +\epsilon_kx) \phi_{u_k}(x_0 +\epsilon_kx)  w_k^2 +\frac{1}{2} |\nabla \phi_{u_k}(x_0 +\epsilon_kx)|^2\\
&\quad + \frac{1}{4}|\nabla \phi_{u_k}(x_0 +\epsilon_kx)|^2 +|\nabla w_k|^2 +\frac{1}{2}|\nabla w_k|^2 \bigg)\dif \sigma\\
& \leq \frac{3}{2} \int_{\partial B_R} \bigg( \lambda w_k^2 +w_k^{p+1}+ \rho (x_0 +\epsilon_kx) \phi_{u_k}(x_0 +\epsilon_kx) w_k^2 + |\nabla \phi_{u_k}(x_0 +\epsilon_kx)|^2 +|\nabla w_k|^2\bigg)\dif \sigma.\\
\end{align*}
\noindent So, 

\begin{align*}
\int_0^{+\infty} |I_R| & \leq \int_0^{+\infty} \frac{3}{2} \int_{\partial B_R} \bigg( \lambda w_k^2 +w_k^{p+1}+  \rho (x_0 +\epsilon_kx)\phi_{u_k}(x_0 +\epsilon_kx) w_k^2  \\
& \qquad + |\nabla \phi_{u_k}(x_0 +\epsilon_kx)|^2 +|\nabla w_k|^2\bigg)\dif \sigma \dif R\\
& =\frac{3}{2} \int_{\R^3} \bigg( \lambda w_k^2 +w_k^{p+1}+ \rho (x_0 +\epsilon_kx) \phi_{u_k}(x_0 +\epsilon_kx) w_k^2 \\
&\qquad+ |\nabla \phi_{u_k}(x_0 +\epsilon_kx)|^2  +|\nabla w_k|^2\bigg)\dif x\\
\\
&<+\infty \text{ for each } k,\\
\end{align*}
\noindent since $w_k$ is a solution to \eqref{rescaledSPinE}. Thus, for each fixed $k$, there exists a sequence $R_m \to +\infty$ as $m\to +\infty$ such that $I_{R_m} \to 0$ as $m\to +\infty$. Letting $R=R_m \to +\infty$ in \eqref{surfaceintegralequationtheorem2} yields

\begin{align*}
0&=\frac{\epsilon_k}{2}\int_{\R^3}\nabla\rho (x_0 +\epsilon_kx)\phi_{u_k}(x_0 +\epsilon_kx)w_k^2\dif x\\
&= \frac{\epsilon_k}{2}\int_{\R^3}\int_{\R^3}\frac {\epsilon_k^2 w_k^2(y) \rho (x_0 +\epsilon_ky)w_k^2(x)\nabla\rho (x_0 +\epsilon_kx)}{4\pi |x-y|} \dif y \,\dif x.\\
\end{align*}
\noindent Since this holds for each fixed $k$, we have 

\begin{equation}\label{final integral E}
\int_{\R^3}\int_{\R^3}\frac { w_k^2(y) \rho (x_0 +\epsilon_ky)w_k^2(x)\nabla\rho (x_0 +\epsilon_kx)}{4\pi |x-y|} \dif y \,\dif x=0.\\
\end{equation}\\

\noindent \textbf{Claim 4.} \emph{There exists $R_0 >0$ and $C>0$ such that, for $k$ sufficiently large, $w_k(x) \leq C|x|^{-1}e^{-\frac{\sqrt{\lambda}}{2}|x|}$ for all $|x|\geq R_0$.}\\

We first note that, by the concentration assumption, it holds that $w_k \to 0$ as $|x|\to +\infty$. Namely, there exists $R_0>0$, $K>0$ such that 

 \begin{equation}\label{wk bound}
 w_k \leq \left(\frac{\lambda}{2}\right)^{\frac{1}{p-1}}, \quad \forall \, |x|\geq R_0, \,\, \forall k\geq K,
 \end{equation}\\
It follows that 

 \begin{equation*}
 w_k^p \leq \frac{\lambda}{2}w_k, \quad \forall \, |x|\geq R_0, \,\, \forall k\geq K,\\
 \end{equation*}\\
\noindent and therefore, since $w_k$ solves \eqref{rescaledSPinE}, we have, for all $ |x|\geq R_0$ and for all $k\geq K$,

\begin{equation}\label{wk inequality}
-\Delta w_k +\lambda w_k \leq -\Delta w_k +(\lambda+ \rho (x_0 +\epsilon_kx)\phi_{u_k}(x_0 +\epsilon_kx) ) w_k = w_k^p \leq \frac{\lambda}{2}w_k.
\end{equation}\\ 
\noindent Namely, it holds that 

\begin{equation}\label{boundDeltawk}
-\Delta w_k \leq -\frac{\lambda}{2}w_k, \quad \forall \, |x|\geq R_0, \,\, \forall k\geq K.
\end{equation}\\
\noindent Now, define

\[\omega(x)\coloneqq C|x|^{-1}e^{-\frac{\sqrt{\lambda}}{2}|x|}, \quad \text{where } C\coloneqq \left(\frac{\lambda}{2}\right)^{\frac{1}{p-1}}R_0\,e^{\frac{\sqrt{\lambda}}{2}R_0},\]\\
\noindent Then, using this definition and \eqref{wk bound}, we see that 

\begin{equation}\label{wklessthanomega}
w_k(x)  \leq \left(\frac{\lambda}{2}\right)^{\frac{1}{p-1}} = \omega(x), \quad \text{for } |x|=R_0, \,\, \forall k\geq K.
\end{equation}\\
\noindent It can also be checked that, 

\begin{equation}\label{delta omega less than omega}
\Delta \omega \leq \frac{\lambda}{4}\omega, \quad \text{for } |x|\neq 0.
\end{equation}\\
\noindent We then define $\bar{\omega}_k(x)\coloneqq w_k(x) -\omega(x)$. By \eqref{wklessthanomega} it holds that 

\begin{equation}\label{baromegabound}
\bar{\omega}_k(x) \leq 0, \quad \text{for } |x|=R_0, \,\, \forall k\geq K.
\end{equation}\\
\noindent Moreover, using \eqref{boundDeltawk} and \eqref{delta omega less than omega}, it holds that 

\begin{align}\label{baromegaequation}
-\Delta \bar{\omega}_k+ \frac{\lambda}{2}\bar{\omega}_k \leq 0, \quad \forall \, |x|\geq R_0, \,\, \forall k\geq K.
\end{align}\\
\noindent and

\begin{equation}\label{baromegadecay}
\lim_{|x|\to+\infty} \bar{\omega}_k(x) =0.
\end{equation}\\
\noindent Thus, by the maximum principle on unbounded domains (see e.g.\ \cite{Berestycki Caffarelli Nirenberg}), it follows that,

\[w_k(x) \leq C|x|^{-1}e^{-\frac{\sqrt{\lambda}}{2}|x|}, \quad \forall \,  |x|\geq R_0,\]\\
\noindent for $k$ sufficiently large.\\

\noindent \textbf{Claim 5.} $\rho(x_0)\nabla \rho (x_0) =0$.\\

We first pick a uniform large constant $C>0$ such that for all $x\in \R^3$\ and large $k$ it holds that

\begin{equation}\label{definition of tilde w}
w_k(x) \leq \tilde w(x):=
  C(1+|x|)^{-1}e^{-\frac{\sqrt{\lambda}}{2}|x|}.
  \end{equation}\\
We now highlight the fact that due to the concentration assumption, from now on, we can take $k$ large enough and a suitable $\epsilon_1>0$ such that 

\[\epsilon_k < \epsilon_1<\min\left\{\epsilon_0, \frac{\sqrt{\lambda}}{b}\right\}, \quad \text{if } b>0,\]\\
and simply 

\[\epsilon_k < \epsilon_0, \quad \text{if } b\leq0,\]\\
where $\epsilon_0>0$ is defined in the statement of the theorem. We assume that $b>0$ as the case $b\leq 0$ is easier and requires only obvious modifications. 
\noindent By the growth assumption on $\rho$, there exists a uniform constant $C_1>0$ such that for all $x\in \R^3$,

\[|\nabla \rho(x_0 +\epsilon_kx)| \leq C_1 (1 +|x|)^ae^{b\epsilon_1|x|} =: g(x).\]\\
By the mean value theorem we have

\[|\rho(x_0+\epsilon_k y)| \leq |\epsilon_ky||\nabla \rho (x_0 +\theta(\epsilon_k y))| + |\rho(x_0)|,\]\\
for some $\theta \in [0,1]$. Combining this with the estimate on $|\nabla\rho(x_0 +\theta(\epsilon_ky))|$, it follows that for some uniform constant $C_2>0$ and for all $y\in \R^3$, 

\begin{equation*}
|\rho(x_0+\epsilon_k y)| \leq C_2 |y|(1+|y|)^ae^{b\epsilon_1|y|}+|\rho(x_0)|=:f(y).
\end{equation*}\\
\noindent Therefore, putting everything together, we have that, for $k$ sufficiently large, 

\begin{equation}\label{dominating func in E}
\left| \frac { w_k^2(y) \rho (x_0 +\epsilon_ky)w_k^2(x)\nabla\rho (x_0 +\epsilon_kx)}{ (x-y)}  \right| \leq \frac{\tilde{w}^2(y) f(y) \tilde{w}^2(x) g(x)}{|x-y|}.
\end{equation}\\
\noindent The right hand side is a uniform $L^1(\R^{6})$ bound. In fact, using for instance the Hardy-Littlewood-Sobolev inequality, we have 

\begin{align}\label{HLS inequality in E}
\left| \int_{\R^3}\int_{\R^3} \frac{\tilde{w}^2(y) f(y) \tilde{w}^2(x) g(x)}{|x-y|} \dif y \,\dif x \right| &\lesssim ||\tilde{w}^2f||_{L^{6/5}(\R^3)}||\tilde{w}^2g||_{L^{6/5}(\R^3)}\nonumber \\
&< +\infty,\\
\nonumber
\end{align}
as the choice of $\epsilon_1$ implies that $\tilde{w}^2f,\, \tilde{w}^2g \in L^{6/5}(\R^3)$. We now let $k\to +\infty$ in \eqref{final integral E}, and note that by \eqref{dominating func in E}, \eqref{HLS inequality in E}, Claim 2, and the assumption that $\rho \in C^1(\R^3)$, we can use the dominated convergence theorem to obtain

\begin{equation*}
\int_{\R^3}\int_{\R^3}\frac { w_0^2(y) \rho (x_0)w_0^2(x)\nabla\rho (x_0)}{4\pi |x-y|} \dif y \,\dif x =0.\\
\end{equation*}\\
\noindent Then, since $w_0>0$ by Claim 2, we have that 
\[\rho(x_0)\nabla \rho (x_0) =0.\]\\
Since $\rho$ is nonnegative, any zero is global minimum, and therefore we have $\nabla \rho (x_0) =0$.
\end{proof}

\subsection{Proof of Theorem \ref{necessary conditions H}}

We follow up Theorem \ref{necessary conditions E} with a similar result on necessary conditions for concentration of solutions in $H^1(\R^3) \times D^{1,2}(\R^3)$.\\

\begin{proof} [Proof of Theorem \ref{necessary conditions H}]

The proof closely follows that of Theorem \ref{necessary conditions E}. We assert that the same five claims as were made in the proof of Theorem \ref{necessary conditions E} hold, and will only highlight the differences in the proofs of these claims. The proof of Claim 1 and Claim 2 follow similarly as in Theorem \ref{necessary conditions E}, however since $\rho$ is both continuous and globally bounded in this case, we do not need to fix a specific compact set $K$ in the regularity arguments, but instead it follows directly that $v_m$ is uniformly bounded in $C^{2,\alpha}_{\textrm{loc}
}(\R^3)$ and $w_k$ is uniformly bounded in $C^{2,\alpha}_{\textrm{loc}
}(\R^3)$. The proof of Claim 3 and Claim 4 follow exactly as in Theorem \ref{necessary conditions E}.  To prove Claim 5, we define the exponentially decaying function $\tilde{w}$ as in \eqref{definition of tilde w} and since $\rho$ and $\nabla \rho$ are bounded, we have, for $k$ sufficiently large, 

\begin{equation*}\
\left| \frac { w_k^2(y) \rho (x_0 +\epsilon_ky)w_k^2(x)\nabla\rho (x_0 +\epsilon_kx)}{(x-y)} \right| \lesssim \frac{\tilde w^2 (y)\tilde w^2 (x)}{|x-y|}\in L^1(\R^6).
\end{equation*}\\
This is enough to conclude the proof as in Theorem \ref{necessary conditions E} using the dominated convergence theorem.
\end{proof}

\section*{Appendix: proof of Lemma \ref{pohozaevlemma}}

\begin{proof}[Proof of Lemma \ref{pohozaevlemma}]

With the regularity remarks of Section \ref{Regularity and positivity} in place, we now multiply the first equation in \eqref{poh system} by $(x, \nabla u)$ and integrate on $B_R(0)$ for some $R>0$. We will compute each integral separately. We first note that by Lemma 3.1 in \cite{D'Aprile and Mugnai} it holds that 

\begin{equation}\label{POH1}
\int_{B_R}-\Delta u (x, \nabla u) \dif x =  -\frac{1}{2}\int_{B_R}|\nabla u|^2 \dif x -\frac{1}{R}\int_{\partial B_R} |(x,\nabla u)|^2 \dif  \sigma +\frac{R}{2}\int_{\partial B_R}|\nabla u|^2 \dif \sigma.
\end{equation}\\
\noindent Fixing $i =1,2,3$, integrating by parts and using the divergence theorem, we then see that,

\begin{align}
\int_{B_R} bu(x_i \partial_i u)\dif x& =b\left[ -\frac{1}{2} \int_{B_R} u^2\dif x +\frac{1}{2}\int_{B_R}\partial_i(u^2x_i)\dif x \right] \nonumber \\ 
&=b\left[ -\frac{1}{2} \int_{B_R} u^2\dif x + \frac{1}{2}\int_{\partial B_R} u^2 \frac{x_i^2}{|x|}\dif \sigma \right].
\nonumber\\
\nonumber
\end{align}
\noindent So, summing over $i$, we get

\begin{equation}\label{POH2}
\int_{B_R} bu(x, \nabla u) \dif x= b\left[-\frac{3}{2}\int_{B_R}u^2\dif x +\frac{R}{2}\int_{\partial B_R}u^2 \dif \sigma \right].
\end{equation}\\
\noindent Again, fixing $i =1,2,3$, integrating by parts and using the divergence theorem, we find that,

\begin{align}
\int_{B_R}c\rho \phi_u u x_i (\partial_i u) \dif x &= c\bigg[ -\frac{1}{2}\int_{B_R} \rho\phi_u u^2 \dif x -\frac{1}{2}\int_{B_R} \phi_u u^2x_i (\partial_i\rho) \dif x -\frac{1}{2}\int_{B_R} \rho u^2  x_i  (\partial_i \phi_u) \dif x \nonumber \\
& \qquad\qquad +\frac{1}{2}\int_{B_R}\partial_i (\rho\phi_u  u^2 x_i)\dif x \bigg] \nonumber\\
&=c\bigg[ -\frac{1}{2}\int_{B_R}\rho\phi_u  u^2 \dif x -\frac{1}{2}\int_{B_R} \phi_u u^2x_i (\partial_i\rho) \dif x -\frac{1}{2}\int_{B_R} \rho u^2  x_i  (\partial_i \phi_u) \dif x \nonumber \\
& \qquad\qquad +\frac{1}{2}\int_{\partial B_R}\rho \phi_u  u^2 \frac{x_i^2}{|x|}\dif \sigma \bigg]. \nonumber\\
\nonumber
\end{align}
\noindent Thus, summing over $i$, we get

\begin{align}\label{POH3}
\int_{B_R}c\rho\phi_u u (x, \nabla u) \dif x &= c\bigg[ -\frac{3}{2}\int_{B_R}\rho\phi_u  u^2 \dif x -\frac{1}{2}\int_{B_R} \phi_u u^2 (x, \nabla \rho) \dif x  \\
& \qquad\qquad -\frac{1}{2}\int_{B_R} \rho u^2  (x, \nabla \phi_u) \dif x+\frac{R}{2}\int_{\partial B_R} \rho\phi_u  u^2 \dif \sigma \bigg]. \nonumber \\
\nonumber
\end{align}
\noindent Finally, once more fixing $i =1,2,3$, integrating by parts and using the divergence theorem, we find that,

\begin{align*}
\int_{B_R}d |u|^{p-1}u (x_i \partial_i u) \dif x = d \left[ \frac{-1}{p+1} \int_{B_R} |u|^{p+1} \dif x+ \frac{1}{p+1} \int_{\partial B_R} |u|^{p+1} \frac{x_i^2}{|x|} \dif \sigma \right],\\
\end{align*}
\noindent and so, summing over $i$, we see that 

\begin{equation}\label{POH4}
\int_{B_R}d|u|^{p-1}u (x, \nabla u) \dif x=d\left[\frac{-3}{p+1}\int_{B_R}|u|^{p+1}\dif x+\frac{R}{p+1}\int_{\partial B_R}|u|^{p+1}\dif \sigma\right].
\end{equation}\\
\noindent Putting \eqref{POH1}, \eqref{POH2}, \eqref{POH3} and \eqref{POH4} together, we see that

\begin{align}\label{prePOH}
& -\frac{1}{2}\int_{B_R}|\nabla u|^2 \dif x -\frac{1}{R}\int_{\partial B_R} |(x,\nabla u)|^2 \dif \sigma +\frac{R}{2}\int_{\partial B_R}|\nabla u|^2 \dif \sigma  +b\bigg[-\frac{3}{2}\int_{B_R}u^2\dif x \nonumber\\
&\quad +\frac{R}{2}\int_{\partial B_R}u^2 \dif \sigma \bigg] + c\bigg[ -\frac{3}{2}\int_{B_R}\rho\phi_u  u^2 \dif x-\frac{1}{2}\int_{B_R} \phi_u u^2 (x, \nabla \rho) \dif x-\frac{1}{2}\int_{B_R} \rho u^2  (x, \nabla \phi_u) \dif x\nonumber \\
& \quad \quad +\frac{R}{2}\int_{\partial B_R} \rho \phi_u  u^2 \dif \sigma \bigg]-d\left[\frac{-3}{p+1}\int_{B_R}|u|^{p+1}\dif x+\frac{R}{p+1}\int_{\partial B_R}|u|^{p+1}\dif \sigma\right] =0.\\
\nonumber
\end{align}
\noindent We now multiply the second equation in \eqref{poh system} by $(x, \nabla \phi_u)$ and integrate on $B_R(0)$ for some $R>0$. Using Lemma 3.1 in \cite{D'Aprile and Mugnai} we see that 

\begin{align*}
\int_{B_R} \rho u^2 (x, \nabla \phi_u)\dif x &= \int_{B_R} -\Delta \phi_u (x, \nabla \phi_u) \dif x \\
&= -\frac{1}{2}\int_{B_R}|\nabla \phi_u|^2 \dif x -\frac{1}{R}\int_{\partial B_R} |(x,\nabla \phi_u)|^2 \dif  \sigma +\frac{R}{2}\int_{\partial B_R}|\nabla \phi_u|^2 \dif \sigma.
\end{align*}\\
\noindent Substituting this into \eqref{prePOH} and rearranging, we get

\begin{align}\label{POHwithboundary}
& -\frac{1}{2}\int_{B_R}|\nabla u|^2 \dif x -\frac{3b}{2}\int_{B_R}u^2\dif x -\frac{3c}{2}\int_{B_R}\rho\phi_u  u^2 \dif x   \nonumber \\
&\qquad\qquad\qquad -\frac{c}{2}\int_{B_R} \phi_u u^2 (x, \nabla \rho) \dif x+\frac{c}{4}\int_{B_R}|\nabla \phi_u|^2\dif x +\frac{3d}{p+1}\int_{B_R}|u|^{p+1}\dif x \nonumber\\
&\qquad=\frac{1}{R}\int_{\partial B_R} |(x,\nabla u)|^2 \dif \sigma -\frac{R}{2}\int_{\partial B_R}|\nabla u|^2 \dif \sigma -\frac{bR}{2}\int_{\partial B_R}u^2 \dif\sigma - \frac{cR}{2}\int_{\partial B_R} \rho\phi_u  u^2 \dif \sigma  \\
&\qquad\qquad\qquad- \frac{c}{2R}\int_{\partial B_R} |(x,\nabla \phi_u)|^2 \dif \sigma +\frac{cR}{4}\int_{\partial B_R}|\nabla \phi_u|^2 \dif \sigma+\frac{dR}{p+1}\int_{\partial B_R}|u|^{p+1}\dif \sigma.\nonumber\\
\nonumber
\end{align}
\noindent We now call the right hand side of this equality $I_R$. We note that $|(x,\nabla u)|\leq R|\nabla u |$ and $|(x,\nabla \phi_u)|\leq R|\nabla \phi_u |$ on $\partial B_R$, so it holds that

\begin{align*}
|I_R| &\leq \frac{3R}{2}\int_{\partial B_R}|\nabla u|^2 \dif \sigma +\frac{bR}{2}\int_{\partial B_R}u^2 \dif\sigma  \\
&\qquad + \frac{cR}{2}\int_{\partial B_R}\rho \phi_u  u^2 \dif \sigma+\frac{3cR}{4}\int_{\partial B_R}|\nabla \phi_u|^2 \dif \sigma+\frac{dR}{p+1}\int_{\partial B_R}|u|^{p+1}\dif\sigma.\\
\nonumber
\end{align*}
\noindent Now, since $|\nabla u|^2$, $u^2 \in L^1(\R^3)$ because $u\in E (\R^3)\subset H^1(\R^3)$, $ \rho\phi_u u^2$, $|\nabla \phi_u|^2 \in L^1(\R^3)$ because $\int_{\R^3}\rho\phi_u u^2\dif x=\int_{\R^3}|\nabla \phi_u|^2\dif x$ and $\phi_u \in D^{1,2}(\R^3)$, and $|u|^{p+1} \in L^1(\R^3)$ because $E(\R^3) \hookrightarrow L^q(\R^3)$ for all $q \in [2,6]$, then it holds that $I_{R_n} \to 0$ as $n\to +\infty$ for a suitable sequence $R_n \to +\infty$ (see e.g. \cite{D'Aprile and Mugnai}). Thus, considering \eqref{POHwithboundary} with $R=R_n$, we see that 

\begin{align}\label{POH limit}
-\frac{c}{2}\int_{\R^3} & \phi_u u^2 (x, \nabla \rho) \dif x \nonumber\\
&= \lim_{n\to+\infty}\left(-\frac{c}{2}\int_{B_{R_n}}  \phi_u u^2 (x, \nabla \rho) \dif x\right) \nonumber \\
&= \lim_{n\to+\infty} \bigg(\frac{1}{2}\int_{B_{R_n}}|\nabla u|^2 \dif x +\frac{3b}{2}\int_{B_{R_n}}u^2\dif x +\frac{3c}{2}\int_{B_{R_n}} \rho \phi_u u^2 \dif x - \frac{c}{4}\int_{B_R}|\nabla \phi_u|^2\dif x \nonumber \\
&\qquad  -\frac{3d}{p+1}\int_{B_{R_n}}|u|^{p+1}\dif x  +\frac{1}{R_n}\int_{\partial B_{R_n}} |(x,\nabla u)|^2 \dif  \sigma -\frac{R_n}{2}\int_{\partial B_{R_n}}|\nabla u|^2 \dif \sigma\nonumber \\
&\qquad\qquad   -\frac{bR_n}{2}\int_{\partial B_{R_n}}u^2 \dif\sigma - \frac{cR_n}{2}\int_{\partial B_{R_n}}\rho \phi_u  u^2 \dif\sigma  - \frac{c}{2R_n}\int_{\partial B_{R_n}} |(x,\nabla \phi_u)|^2 \dif \sigma\nonumber\\
&\qquad\qquad\qquad  +\frac{cR_n}{4}\int_{\partial B_{R_n}}|\nabla \phi_u|^2 \dif \sigma +\frac{dR_n}{p+1}\int_{\partial B_{R_n}}|u|^{p+1}\dif \sigma \bigg) \nonumber\\
&=  \frac{1}{2}\int_{\R^3}|\nabla u|^2 \dif x +\frac{3b}{2}\int_{\R^3}u^2\dif x +\frac{3c}{2}\int_{\R^3}\rho\phi_u  u^2 \dif x  \nonumber \\
&\qquad- \frac{c}{4}\int_{\R^3}|\nabla \phi_u|^2\dif x - \frac{3d}{p+1}\int_{\R^3}|u|^{p+1}\dif x\nonumber\\
&= \frac{1}{2}\int_{\R^3}|\nabla u|^2 \dif x +\frac{3b}{2}\int_{\R^3}u^2\dif x +\frac{5c}{4}\int_{\R^3}\rho\phi_u  u^2 \dif x- \frac{3d}{p+1}\int_{\R^3}|u|^{p+1}\dif x,\\
\nonumber
\end{align}
\noindent where the final equality holds because $\int_{\R^3}|\nabla \phi_u|^2\dif x=\int_{\R^3}\rho\phi_u u^2\dif x $. Therefore, since $(u, \phi_u) \in E(\R^3) \times D^{1,2}(\R^3)$ solves \eqref{poh system}, we have shown that

\begin{equation*}
\left|\frac{c}{2}\int_{\R^3}  \phi_u u^2 (x, \nabla \rho) \dif x\right| <+\infty.\\
\end{equation*}\\
\noindent Moreover, \eqref{POH limit} also proves that

\begin{equation*}
\frac{1}{2}\int_{\R^3} |\nabla u|^2 +\frac{3b}{2}\int_{\R^3}  u^2 +\frac{5c}{4}\int_{\R^3} \rho\phi_u u^2  + \frac{c}{2}\int_{\R^3} (x,\nabla \rho) u^2\phi_u -\frac{3d}{p+1}\int_{\R^3} |u|^{p+1}=0.
\end{equation*}
\end{proof}

\renewcommand\refname{References}

\end{document}